\newtheorem{theorem}{Theorem}[section]
\newtheorem{corollary}{Corollary}
\newtheorem{lemma}[theorem]{Lemma}
\newtheorem{proposition}{Proposition}
\theoremstyle{definition}
\newtheorem{definition}[theorem]{Definition}
\newtheorem{remark}{Remark}
\newtheorem{example}{Example}
\newcommand{\llbracket}{\lbrack\! \lbrack}
\newcommand{\rrbracket}{\rbrack\! \rbrack}
\newcommand{\ppig}{\mathcal{P}^{\pi}G} 
\newcommand{\palg}{\mathcal{P}^{\alpha}G}    
\newcommand{\palag}{\mathcal{P}^{\alpha}(AG)}
\newcommand{\palgt}{\mathcal{P}^{\alpha'}G'}
\newcommand{\alpalg}{A\left(\mathcal{P}^{\alpha}G\right)}
\newcommand{\algpalg}{A_g\left(\mathcal{P}^{\alpha}G\right)}
\newcommand{\pgalag}{\mathcal{P}_{g}^{\alpha}(AG)}
\def\lcf{\lbrack\! \lbrack}
\def\rcf{\rbrack\! \rbrack}
\newcommand{\lp}{\left(}
\newcommand{\rp}{\right)}
\newcommand{\lc}{\left\{}
\newcommand{\rc}{\right\}}
\newcommand{\bra}{\langle}
\newcommand{\ket}{\rangle}
\newcommand{\R}{\mathbb{R}}      
\newcommand{\F}{\mathbb{F}}
\newcommand{\Flder}{\rightarrow}
\newcommand{\hooklongrightarrow}{\lhook\joinrel\longrightarrow}
\newcommand{\proa}{A^*G \mbox{$\;$}_{\tau^*} \kern-3pt\times_\alpha
G \mbox{$\;$}_\beta \kern-3pt\times_{\tau^*} A^*G}
\newcommand{\lvec}[1]{\overleftarrow{#1}}
\newcommand{\rvec}[1]{\overrightarrow{#1}}
\newcommand{\alg}{\mathfrak{so}(3)}
\newcommand{\Ad}{\mbox{Ad}}
\newcommand{\ca}{\mbox{cay}}
\newcommand{\ad}{\mbox{ad}}
\newcommand{\Om}{\Omega}
\newcommand{\al}{\mathfrak{g}}
\title[Second-order variational problems on Lie groupoids] 
      {Second-order variational problems on Lie groupoids and optimal control applications}
\author[Leonardo Colombo and David Mart\'in de Diego]{}
\subjclass{Primary: 70G45 ; Secondary: 70Hxx, 49J15, 53D17, 37M15.}
 \keywords{Discrete Lagrangian mechanics, higher-order variational problems, optimal control, Lagrangian submanifolds, Lie groupoids, Lie algebroids, geometric integration}
 \email{ljcolomb@umich.edu}
 \email{david.martin@icmat.es}
\begin{document}
\maketitle

\centerline{\scshape Leonardo Colombo}
\medskip
{\footnotesize
 \centerline{Department of Mathematics}
 \centerline{University of Michigan}
   \centerline{530 Church Street, 3828 East Hall}
   \centerline{ Ann Arbor, Michigan, 48109, USA}
} 

\medskip

\centerline{\scshape David Mart\'in de Diego}
\medskip
{\footnotesize
 \centerline{ Instituto de Ciencias Matem\'aticas (CSIC-UAM-UCM-UC3M)}
   \centerline{Calle Nicol\'as Cabrera 15, Campus UAM, Cantoblanco}
   \centerline{Madrid, 28049, Spain}
}

\bigskip

 \centerline{(Communicated by the associate editor name)}

\begin{abstract}
In this paper we study, from a variational and geometrical point of view, second-order variational problems on Lie groupoids and the construction of variational integrators for optimal control problems.  First, we develop variational techniques for
second-order variational problems on Lie groupoids and their applications
 to the construction of variational integrators
for optimal control problems of mechanical systems. Next, we
show how Lagrangian submanifolds of a symplectic groupoid
gives intrinsically the discrete dynamics for second-order systems, both unconstrained and constrained, and we study the geometric properties of the implicit flow which defines the dynamics in the Lagrangian submanifold. We also study the theory of
reduction by symmetries and the corresponding Noether theorem.  

\end{abstract}

\section{Introduction}

The topic of discrete Lagrangian mechanics concerns the study of
certain discrete dynamical systems on manifolds. As the name
suggests, these discrete systems exhibit many geometric features
which are analogous to those in continuous Lagrangian mechanics. In
particular, the discrete dynamics  are derived  from  variational
principles, have symplectic or Poisson flow maps, conserve momentum
maps associated to Noether-type symmetries, and admit a theory of
reduction. While discrete Lagrangian systems are quite
mathematically interesting, in their own right, they also have
important applications in the design of structure-preserving numerical methods for
many dynamical systems in  mechanics and optimal control
theory.

Numerical methods which are constructed in this way are called
variational integrators. This approach to discretizing Lagrangian
systems was put forward in papers by Bobenko and Suris \cite{BoSu},
Moser and Veselov \cite{moserveselov}, and others in the early
1990s, and the general theory was developed over the subsequent
decade (see Marsden and West \cite{MaWest} for a comprehensive
overview).

A. Weinstein \cite{We} observed that these systems could be
understood as a special case of a more general theory, describing
discrete Lagrangian mechanics on arbitrary Lie groupoids where the Lagrangian function is defined on a Lie groupoid.
A Lie groupoid $G$ is a natural generalization of the concept of a
Lie group, where now not all elements are composable. The product
$gh$ of two elements is only defined on the set of composable
pairs $G_{2}=\lc(g,h)\in G\times G\,|\,\beta(g)=\alpha(h)\rc$, where
$\alpha:G\Flder Q$  and $\beta:G\Flder Q$ are the source and target
maps over a base manifold $Q$. This concept was introduced in
differential geometry by Ereshmann in the 1950's. The infinitesimal
version of a Lie groupoid $G$ is the Lie algebroid $\tau_{AG}:AG\Flder Q$,
which is the restriction of the vertical bundle of $\alpha$ to the
submanifold of the identities. This setting is general enough to include the discrete counterparts of several types of fundamental equations in Mechanics as for instance, Euler-Lagrange equations for Lagrangians defined on tangent bundles \cite{MaWest}, Euler-Poincar\'e equations for Lagrangians defined on Lie algebras \cite{Mars3}, \cite{MaPeSh}, Lagrange-Poincar\'e equations for Lagrangians defined on Atiyah bundles, etc. Such discrete counterparts are  obtained discretizing the continuous Lagrangian to the corresponding Lie groupoid and then applying a suitable discrete variational principle.

A complete description of the discrete Lagrangian and Hamiltonian
mechanics on Lie groupoids was given in the work by Marrero,
Mart\'in de Diego and Mart\'inez \cite{MMM} (see also \cite{MMM2}). Following the program proposed by A. Weinstein \cite{We}, in this work, we
generalize the theory of discrete second-order Lagrangian mechanics
and variational integrators for a second-order discrete Lagrangian
$L_d:G_2\to\R$ in two main directions. First, we develop variational
principles for second-order variational problems on Lie groupoids
and we show how to apply this theory to the construction of
variational integrators for optimal control problems of mechanical
systems. Secondly, we show that Lagrangian submanifolds of a
symplectic groupoid (cotangent groupoid) give rise to discrete
dynamical second-order systems. We also develop a
reduction by  symmetries, and study the relationship between
the dynamics and variational principles for second-order
variational problems. Finally we study discrete second-order constrained Lagrangian mechanics on Lie groupoids. The main application of this theory will be  systems  subjected  to constraints
and underactuated control systems.
Our results in the second part of the paper are based on the papers   \cite{MMM} and \cite{MMS} but in second-order theories. 

There are variational principles which involves higher-order
derivatives \cite{BlochCrouch2}, \cite{BHM}, \cite{MR1333770}, \cite{GHMRV10}, \cite{GHMRV12}, \cite{GHR12}, \cite{MR2580471}, \cite{Eduardoho}, \cite{DavidM}, \cite{MR1036158}, \cite{PrietoRoman-Roy1} since from it one can obtain the equations
of motion for Lagrangians where the configuration space is a
higher-order tangent bundle. The study of higher-order variational systems has regularly attracted a lot of attention
from the applied and theoretical points of view (see \cite{LR1}), but
recently, higher-order variational problems have been studied for
their important applications in optimal control, aeronautics, robotics,
computer-aided design, air traffic control, trajectory planning and computational anatomy. 

Since the main applications of higher-order variational problems are second-order problems we will focus our attention in the second-order case along the work, leaving the extension to higher-order case as a straightforward development. 

The organization of the paper is as follows. In Section 2 we recall some constructions and results on discrete Mechanics on Lie groupoids which will be used in the next sections.  Section 3 is devoted to study variational principles for second-order discrete mechanical systems on Lie groupoids, their extension to the constrained case and the application to the theory of optimal control of mechanical systems and construction of variational integrators. In Section 4
we show how  Lagrangian submanifolds of an appropriate 
symplectic groupoid (cotangent groupoid) give rise to discrete
dynamical second-order systems. From such Lagrangian submanifold we obtain the discrete second-order Euler-Lagrange equations on Lie groupoids and such equations correspond with the ones obtained from the variational point of view. We also develop a
reduction by  symmetries, and we study the relationship between
the different dynamics and variational principles for these second-order
variational problems. Finally we study discrete constrained second-order Lagrangian mechanics. This
allows for systems with arbitrary constraints.

Throughout the paper, we have occasion to draw on certain technical constructions using the theory of Lie algebroids and retraction maps. We provide
some supplementary details and discussion of these and an overview on discrete mechanics and higher-order tangent bundles in some Appendices at the end of this paper.

\section{Groupoids and discrete mechanics}\label{LG}

This section review some results about Lie groupoids and discrete mechanics on Lie groupoids based on \cite{MMM} and \cite{MMM2}.

\subsection{Generalities about Lie groupoids} A \textit{groupoid} is a small category in
which every morphism is an isomorphism (i.e. all morphism is
invertible). That is,
\begin{definition}
 A {\it groupoid} $G$ over a
set $Q,$ denoted $ G \rightrightarrows Q
  $, consists of a set of objects $Q$, a set of morphisms $G$, and the
  following structural maps:
\begin{itemize}
\item a {\it source map} $ \alpha \colon G \rightarrow Q $ and a {\it target map} $
  \beta \colon G \rightarrow Q $. Thus an element $g\in G$ is thought as an arrow from $x=\alpha(g)$ to $y=\beta(g)$ in $Q$. 
  $$
\xymatrix{*=0{\stackrel{\bullet}{\mbox{\tiny
 $x=\alpha(g)$}}}{\ar@/^1pc/@<1ex>[rrr]_g}&&&*=0{\stackrel{\bullet}{\mbox{\tiny
$y=\beta(g)$}}}}
$$
\item an associative {\it multiplication map} $ m \colon G _2 \rightarrow G
  $, $ m(g,h)= gh $, with $\alpha(g)=\alpha(gh)$ and
  $\beta(h)=\beta(gh)$ where
\begin{equation*}
  G _2:= G \mathrel{_\beta \times _\alpha} G = \left\{ \left( g , h
    \right) \in G \times G \;\middle\vert\; \beta (g) = \alpha (h)
  \right\}
\end{equation*}
is called set of {\it composable pairs} defined by $\alpha$ and $\beta$. $gh$ is thought as the composite
arrow from $x$ to $z$ if $g$ is an arrow from $x=\alpha(g)$ to $y=\beta(g)=\alpha(h)$ and $h$ is an arrow from $y$ to $\beta(h)=z$.
$$\xymatrix{*=0{\stackrel{\bullet}{\mbox{\tiny
 $x=\alpha(g)=\alpha(gh)$}}}{\ar@/^2pc/@<2ex>[rrrrrr]_{gh}}{\ar@/^1pc/@<2ex>[rrr]_g}&&&*=0{\stackrel{\bullet}{\mbox{\tiny
 $y=\beta(g)=\alpha(h)$}}}{\ar@/^1pc/@<2ex>[rrr]_h}&&&*=0{\stackrel{\bullet}{\mbox{\tiny
 $z=\beta(h)=\beta(gh)$}}}}$$
\item an {\it identity map}, $ \epsilon \colon Q \rightarrow G,$ a section of $\alpha$ and $\beta$, such
  that for all $ g \in G $,
\begin{equation*}
  \epsilon \left( \alpha (g) \right) g = g = g \epsilon \left( \beta
    (g) \right),
\end{equation*}
\item an {\it inversion map} $ i \colon G \rightarrow G $, mapping $g$ into
  $g ^{-1}$, such that for all $ g \in G $,
\begin{equation*}
  g g ^{-1} = \epsilon \left( \alpha (g) \right) \hbox{ and } g ^{-1} g =
  \epsilon \left( \beta (g) \right) .
\end{equation*}

$$\xymatrix{*=0{\stackrel{\bullet}{\mbox{\tiny
 $x=\alpha(g)=\beta(g^{-1})$}}}{\ar@/^1pc/@<2ex>[rrr]_g}&&&*=0{\stackrel{\bullet}{\mbox{\tiny
 $y=\beta(g)=\alpha(g^{-1})$}}}{\ar@/^1pc/@<2ex>[lll]_{g^{-1}}}}$$

\end{itemize}
\end{definition}

\begin{remark}

Alternatively, a groupoid can be seen as a weak version of a group,
where the multiplication will be defined only for elements in
$G_2\subset G\times G$.\hfill$\diamond$
\end{remark}

We will focus on a particular class
of groupoids, the Lie groupoids which have a differential structure in addition to their algebraic structure.

\begin{definition}
  \label{def:liegroupoid}
  A {\it Lie groupoid} is a groupoid $ G \rightrightarrows Q $ where
\begin{enumerate}
\item $G$ and $Q$ are differentiable manifolds,
\item $ \alpha, \beta $ are submersions,
\item the multiplication map $ m,$ inversion $i$, and identity $\epsilon$, are differentiable.
\end{enumerate}
\end{definition}

\begin{remark}
  In Definition~\ref{def:liegroupoid}, $\alpha$ and $\beta$ must be
  submersions so that $ G _2 $ is a differentiable manifold. From the definition it follows that $m$ is a submersion,
  $\epsilon$ is an immersion, and $i$ is a diffeomorphism.\hfill $\diamond$
\end{remark}

If $q\in Q$, $\alpha^{-1}(q)$ and $\beta^{-1}(q)$ will be said the $\alpha$-fiber and $\beta$-fiber of $q$.




\begin{definition}\label{leftrighttranslationgroupoids}
  Given a groupoid $ G \rightrightarrows Q$ and $ g \in G
  $, define the {\it left translation} $ \ell _g \colon \alpha ^{-1}
  \left( \beta (g) \right) \rightarrow \alpha ^{-1} \left( \alpha (g)
  \right) $ and {\it right translation} $ r _g \colon \beta ^{-1}
  \left( \alpha (g) \right) \rightarrow \beta ^{-1} \left( \beta (g)
  \right) $ by $g$ to be
\begin{equation*}
  \ell _g (h) = g h \hbox{ and } r _g (h) = h g .
\end{equation*} Note that, $\ell^{-1}_g=\ell_{g^{-1}}$ and $r^{-1}_g=r_{g^{-1}}.$
\end{definition}

Denoting by $\mathfrak{X}(G)$ the set of vector fields on $G$ one may introduce the notion of a left (right)-invariant vector
field in a Lie groupoid, as in the case of Lie groups.

\begin{definition}
  Given a Lie groupoid $ G \rightrightarrows Q $, a vector field  $X\in \mathfrak{X} (G)$ is {\em left-invariant} (resp., {\em right-invariant}) if $X$ is
  $\alpha$-vertical (resp., $\beta$-vertical), that is, it is tangent to the fibers of $\alpha$ (resp., $\beta$), 
  $T\alpha(X)=0$ (resp., $T\beta(X)=0$) and
  $ \left( T _h \ell_g \right) \left( X (h) \right) = X\left( g h \right) $
  for all $ (g, h) \in G _2$  (resp.,
  $ \left( T _h r _g \right) \left( X (h) \right) = X\left( h g
  \right)$).
\end{definition}

\begin{definition}
A \emph{Lie algebroid} $A$ over a manifold $Q$ is a
real vector bundle $\tau_{A}:A\to Q$ together with a Lie bracket
$\llbracket\cdot,\cdot\rrbracket$ on $\Gamma(\tau_{A})$, the set of sections of $\tau_{A}:A\to Q$, and a bundle
map $\rho:A\to TQ$ such that $\llbracket X,fY\rrbracket=f\llbracket
X,Y\rrbracket+\rho(X)(f)Y$ for all $X,Y\in\Gamma(\tau_{A})$ and
$f\in C^{\infty}(Q)$.
\end{definition}
\begin{remark} If $(A,\llbracket\cdot,\cdot\rrbracket,\rho)$ is a Lie algebroid over
$Q$ then $\rho$ is a homomorphism between the Lie algebras
$(\Gamma(\tau_{A}),\llbracket\cdot,\cdot\rrbracket)$ and
$(\mathfrak{X}(Q),[\cdot,\cdot])$.\hfill$\diamond$ 
\end{remark}

In Lie groups, the infinitesimal version of a Lie group is a Lie
algebra, therefore we will see that the corresponding infinitesimal version of a
Lie groupoid is a Lie algebroid. Next, we define the
\textit{Lie algebroid associated with a Lie groupoid $G\rightrightarrows Q$.}

Given a Lie groupoid $ G \rightrightarrows Q $, consider the
vector bundle $$\tau_{AG}:AG\longrightarrow Q$$ whose fiber at a point $q\in Q$ is $A_qG =\ker T _{
\epsilon(q)}\alpha=V\alpha $, i.e., the tangent space to the $\alpha$-fiber
at the identity section, for $ q \in Q $.

It is easy to prove that there exists a bijection between the space of sections $\Gamma
\left(\tau_{AG}\right)$ and the set of left-invariant vector fields on $G$.  If $X$ is a section of $\tau_{AG}$, the corresponding left-invariant vector field on $G$ will be denoted by $\overleftarrow{ X } \in \mathfrak{X}(G)$ where
\begin{equation}
\label{left-invariant-vf}
\overleftarrow{ X } (g) = \left( T _{ \epsilon \left( \beta (g) \right) }
\ell _g \right) \left( X \left( \beta (g) \right) \right) .
\end{equation}

The Lie algebroid structure on $AG$ is given by the bracket $\lcf\cdot,\cdot\rcf$
and the anchor map $\rho$ defined as follows:
\begin{equation}\label{estructuraalgebroide}
 \overleftarrow{\llbracket{X , Y}\rrbracket} = \bigl[
  \overleftarrow{X} , \overleftarrow{Y} \bigr] , \qquad \rho
  (X) (q)  = \left( T _{ \epsilon (q) } \beta \right) \left( X (q)
  \right) ,
\end{equation}
for all $ X, Y \in \Gamma \left( \tau_{AG} \right) $ and $ q \in Q $.

\begin{definition}
Given a Lie groupoid $G \rightrightarrows Q$, the triple
$(AG,\lcf\cdot,\cdot\rcf,\rho)$ defined in \eqref{estructuraalgebroide} is called \textit{Lie algebroid
associated to} $G \rightrightarrows Q$.
\end{definition}

\begin{remark}
Alternatively one can also establish a bijection between the space of sections $\Gamma
  \left( \tau_{AG} \right) $ and the set of right-invariant vector fields on $
  Q$, by
\begin{equation}
\label{right-invariant-vf}
  \overrightarrow{ X } (g) = - \left( T _{ \epsilon \left( \alpha (g)
      \right) } \left( r _g \circ i \right) \right) \left( X \left(
      \alpha (g) \right) \right) ,
\end{equation}
which yields the Lie bracket relation
\begin{equation*}
  \overrightarrow{\llbracket{X , Y}\rrbracket} = -\bigl[
  \overrightarrow{ X }, \overrightarrow{ Y } \bigr], \qquad \bigl[\overrightarrow{X},\overleftarrow{Y}\bigr]=0.
\end{equation*}
Thus the  mapping $X$ into $\overleftarrow{ X } $ is a Lie algebra isomorphism,
and the mapping $X$ into $\overrightarrow{ X } $ is a Lie algebra
anti-isomorphism (see \cite{Weinstein-symplectic} and \cite{Mack} fore more details).\hfill$\diamond$
\end{remark}

\subsubsection{Examples of Lie groupoids:}\label{examples} We introduce some examples of Lie groupoids.

\vspace{0.3cm}

$\bullet$ \textit{The pair or banal groupoid:} Let $Q$ be a
differentiable manifold, and consider the product manifold
$G=Q\times Q$.  $G$ is a Lie groupoid over $Q$ where the source
and target maps $\alpha$ and $\beta$ are the projections onto the
first and second factors respectively. The identity is defined as
$\epsilon(q)=(q,q)$ for all $q\in Q,$ the multiplication
$m((q,s),(s,r))=(q,r)$ for $(q,s),(s,r)\in Q\times Q$ and the
inverse map $i(q,s)=(s,q).$

Note that, if $q$ is a point of $Q$, then
$V_{\epsilon(q)}\alpha\simeq T_{q}Q$. Hence the Lie algebroid of $G$ is isomorphic to the standard Lie algebroid $\tau_{TQ}:TQ\to Q$. In this sense, the Banal groupoid is considered as the discrete space for discretizations of Lagrangian functions $L:TQ\to\R$.

\vspace{0.3cm}

$\bullet$ \textit{Lie groups:} Let $G$ be a Lie group.
$G$ is a Lie groupoid over one point $Q=\{\mathfrak{e}\},$ the identity element
of $G.$ The structural maps of the
Lie groupoid $G$ are
$$\alpha(g)=\mathfrak{e},\quad \beta(g)=\mathfrak{e},\quad \epsilon(\mathfrak{e})=\mathfrak{e}, \quad i(g)=g^{-1} \hbox{ and } m(g,h)=gh, \hbox{ for }g,h\in G.$$
The Lie algebroid associated with $G$ is the Lie algebra
$\mathfrak{g}$ of $G$.

\vspace{0.3cm}

$\bullet$ \textit{Transformation or action Lie groupoid}. Let $H$ be a Lie group with identity $\tilde{\mathfrak{e}}$ and  $\varphi: Q \times H\to Q$ be a right action of $H$ on $Q$. The product manifold $G= Q\times H$  is a Lie groupoid over $Q$, with structural maps given by 
\begin{align*}
&\alpha(q,\tilde{h})= q,\quad \beta(q,\tilde{h})= \varphi(q,\tilde{h}),\quad \epsilon(q)=(q,\tilde{\mathfrak{e}}),\quad m((q,\tilde{h}),(\varphi(q,\tilde{h}),\tilde{h}'))=(q,\tilde{h}\tilde{h}'),\\
&i(q,\tilde{h})= (\varphi(q,\tilde{h}),\tilde{h}^{-1}) \hbox{ for } q\in Q \hbox{ and }h,h'\in H. 
\end{align*} The Lie groupoid $G$ is called action or transformation Lie groupoid and its associated Lie algebroid is the action algebroid $\hbox{pr}_1:M\times\tilde{\mathfrak{h}}\to M$ where $\tilde{\mathfrak{h}}$ is the Lie algebra of the Lie group $H$ (for more details, see \cite{Mack} and \cite{MMM}).

$\bullet$ \textit{The cotangent groupoid:} Let $G\rightrightarrows
Q$ be a Lie groupoid. If $A^\ast G$ is the dual bundle to $AG$ then
the cotangent bundle $T^\ast G$ is a Lie groupoid over $A^\ast G$.
The projections $\tilde\beta$ and $\tilde\alpha$, the partial
multiplication $\oplus _{T^\ast G}$, the identity section
$\tilde\epsilon$ and the inversion $\tilde{\i}$ are defined by the structural maps of $G$ as
follows,
\begin{equation}\label{eq:cotangent:groupoid}
\kern-15pt
\begin{array}{l} \tilde\beta(\mu _g)(X)=\mu _g
((T_{\epsilon (\beta(g))} \ell_g )(X)), \mbox{ for }\mu _g\in
T^\ast _gG \mbox{ and }X\in A_{\beta(g)}G, \\[5pt] \tilde\alpha (\nu
_h)(Y)=\nu _h ((T_{\epsilon (\alpha (h))} r_h) (Y-
(T _{\epsilon (\alpha (h))}(\epsilon \circ \beta)) (Y))),\\[4pt]\kern160pt
\mbox{ for }\nu _h\in T^\ast _hG \mbox{ and }Y\in A_{\alpha(h)}G ,
\\[5pt] (\mu _g\oplus _{T^\ast G}\nu _h)(T_{(g,h)}m (X_g,Y_h))=\mu
_g(X_g)+\nu _h (Y_h),\\[4pt]\kern160pt\mbox{ for }(X_g,Y_h)\in
T_{(g,h)}G_{2},
\\[5pt] \tilde\epsilon(\mu _x)(X_{\epsilon (x)})=\mu _x(X_{\epsilon
(x)}- (T_{\epsilon (x)}(\epsilon \circ \alpha )) (X_{\epsilon
(x)}))),\\[4pt] \kern160pt \mbox{ for }\mu _x\in A^\ast _xG \mbox{
and }X_{\epsilon (x)}\in T_{\epsilon (x)}G ,\\[5pt] \tilde{\i}
(\mu _g)(X_{g^{-1}})=-\mu _g((T_{g^{-1}} i )(X_{g^{-1}})), \mbox{
for }\mu _g\in T^\ast _gG\mbox{ and }X_{g^{-1} }\in T_{g^{-1}}G.
\end{array}
\end{equation}
Here $\alpha,\beta,m,i,\epsilon$ are the structural maps of $G\rightrightarrows Q$ (for more details, see \cite{Weinstein-symplectic} and \cite{IMMP}).

$\bullet$ \textit{Symplectic groupoids:} Finally, we introduce a
subclass of Lie groupoids with an additional structure, symplectic
groupoids. They are endowed with a symplectic manifold structure. 
A {\em symplectic groupoid} is a Lie groupoid $G\rightrightarrows Q$
with a symplectic form $\omega$ on $G$ such that the graph of the
composition law $m$ given by $$\hbox{graph}(m):=\{(g,h,r)\in G\times G\times G\mid
(g,h)\in G_2 \hbox{ and } r=m(g,h)\}$$ is a Lagrangian submanifold
of $G\times G\times G^{-}$ with the product symplectic form, where
the first two factors $G$ are endowed with the symplectic form
$\omega$ and the third factor $G^{-}$ with the symplectic
form $-\omega.$

Observe that if $G\rightrightarrows Q$ is a Lie groupoid then the cotangent
groupoid $T^{*}G\rightrightarrows A^{*}G$ is a symplectic groupoid
with the canonical symplectic $2$-form on $T^{*}G$, denoted $\omega_{G}$.
\subsection{Lie Groupoids and Discrete Mechanics}

Next, we give a review of some generalities on discrete mechanics on Lie groupoids based in \cite{MMM} and \cite{MMM2}.
\subsubsection{Discrete Euler-Lagrange equations} Let $G$ be  a Lie
groupoid over $Q$ with structural maps
\[
\alpha, \beta: G \longrightarrow Q, \; \; \epsilon: Q \longrightarrow G, \; \; i: G \longrightarrow G,
\; \; m: G_{2} \longrightarrow G.
\]
Denote by $\tau_{AG}:AG\to Q$ the Lie algebroid of $G$.

 A \emph{discrete Lagrangian} is a function $\mathbb{L}_d:{G}\to\R$. Fixed $g\in
G$, we define the set of admissible sequences with values in $G$:
\begin{equation}\label{sequences}
{\mathcal C}^N_{g}=\{(g_1, \ldots, g_N)\in G^N \mid (g_k,
g_{k+1})\in G_2 \hbox{ for } k=1,\ldots, N-1 \hbox{ and } g_1
\ldots g_N=g  \}.
\end{equation}

An admissible sequence $(g_{1}, \dots , g_{N}) \in {\mathcal C}^N_g$
is a solution of \emph{the discrete Euler-Lagrange equations} if
\[
0=\sum_{k=1}^{N-1}\left[\lvec{X}_k\big({g_k})(\mathbb{L}_d)-\rvec{X}_k\big({g_{k+1}})(\mathbb{L}_d)
\right], \; \; \mbox{ for } X_{1}, \dots , X_{N-1} \in
\Gamma(\tau_{AG}).
\]
For $N=2$ we obtain that $(g, h)\in G_2$ is a solution of the discrete Euler-Lagrange equations if
\[
\lvec{X}({g})(\mathbb{L}_d)-\rvec{X}({h})(\mathbb{L}_d)=0
\]
for every section $X$ of $AG$.

\begin{remark}
Marrero et al. \cite{MMM} showed that these discrete Euler-Lagrange equations are also equivalent to the sequence $g_1,\ldots,g_N\in G$ corresponding to a critical point of the action sum $$(g_1,\ldots,g_N)\longmapsto\sum_{k=1}^{N}\mathbb{L}_{d}(g_k),$$ over the space of admissible sequences.  

In the case when $G$ is the banal groupoid $Q\times Q\rightrightarrows Q$, this recovers the discrete Euler-Lagrange equations, $$D_{1}\mathbb{L}_{d}(q_k,q_{k+1})+D_{2}\mathbb{L}_{d}(q_{k-1},q_k)=0$$ for $k=1,\ldots,N-1,$ as in Marsden and West \cite{MaWest}.\hfill$\diamond$
\end{remark}

\subsubsection{Discrete Lagrangian evolution operator}

We say that a differentiable mapping $\Psi: G\rightarrow G$ is a
\emph{discrete flow} or a \emph{discrete Lagrangian evolution
operator for $\mathbb{L}_{d}$} if it verifies the following properties:
\begin{enumerate}
\item[-] $\hbox{graph}(\Psi)\subseteq G_2$, that is, $(g, \Psi(g))\in G_2$, $\forall g\in
G$.
\item[-] $(g, \Psi(g))$ is a solution of the discrete Euler-Lagrange
equations, for all $g\in G$, that is,
\begin{equation}\label{5.22'}
\lvec{X}(g)(\mathbb{L}_d)-\rvec{X}(\Psi(g))(\mathbb{L}_d)=0
\end{equation} for every section $X$ of $AG$ and every $g\in G.$
\end{enumerate}

\subsubsection{Discrete Legendre transformations} \label{section5.6}
Given a discrete Lagrangian $\mathbb{L}_{d}:G\to\R$ we define two
\emph{discrete Legendre transformations} $\F^{-}\mathbb{L}_{d}:
G\rightarrow A^*G$ and $\F^{+}\mathbb{L}_{d}: G\rightarrow A^*G$
 as follows (see \cite{MMM})
\begin{align}\label{DLt-}
(\F^{-}\mathbb{L}_{d})(h)(v_{\epsilon(\alpha(h))})=&-v_{\epsilon(\alpha(h))}(\mathbb{L}_d\circ
r_h\circ i), \mbox{ for } v_{\epsilon(\alpha(h))}\in
A_{\alpha(h)}G,\\
(\F^{+}\mathbb{L}_d)(g)(v_{\epsilon(\beta(g))})=&
v_{\epsilon(\beta(g))}(\mathbb{L}_d\circ \ell_g), \mbox{ for }
v_{\epsilon(\beta(g))}\in A_{\beta(g)}G.
\end{align} Note that  $(\F^{+}\mathbb{L}_d)(g)\in A^*_{\beta(g)}G$ and
$(\F^{-}\mathbb{L}_d)(h)\in A^*_{\alpha(h)}G$.

\subsubsection{Regular discrete Lagrangians and Hamiltonian evolution operator}

A discrete Lagrangian $\mathbb{L}_d:{G}\to \R$ is said to be \emph{regular} if and only if the Legendre transformation
$\F^+\mathbb{L}_d$  is a local diffeomorphism (equivalently, if and only if the Legendre transformation $\F^-\mathbb{L}_d$ is a local
diffeomorphism). In this case, if $(g_0,h_0)\in G\times G$ is a solution of the discrete Euler-Lagrange
equations for $\mathbb{L}_d$ then, one may prove (see \cite{MMM}) that there exist two open subsets $U_{0}$ and $V_{0}$ of
$G$, with $g_{0} \in U_{0}$ and $h_{0} \in V_{0}$, and there exists a (local) discrete Lagrangian evolution
operator $\Psi: U_{0} \to V_{0}$ such that:
\begin{enumerate}
\item $\Psi(g_{0}) = h_{0}$, \item $\Psi$ is a diffeomorphism and \item $\Psi$ is
unique, that is, if $U'_{0}$ is an open subset of $G$, with $g_{0} \in U_{0}',$ and $\Psi': U'_{0} \to
G$ is a (local) discrete Lagrangian evolution operator then
\[
\Psi\mid_{U_{0}\cap U_{0}'} = \Psi'\mid_{U_{0}\cap U_{0}'}.
\]
\end{enumerate}
Moreover, if $\F^{+}\mathbb{L}_d$ and $\F^{-}\mathbb{L}_d$ are global diffeomorphisms (that is, $\mathbb{L}_d$ is \emph{hyperregular}) then
$\Psi=(\F^{-}\mathbb{L}_d)^{-1}\circ \F^{+}\mathbb{L}_d$.

If $\mathbb{L}_d: {G}\to \R$ is a hyperregular Lagrangian function, then pushing forward to $A^*{G}$ with the discrete
Legendre transformations, we obtain the \emph{discrete  Hamiltonian evolution operator}, $\widetilde{\Psi}:
A^*{G}\to A^*{G}$ given by
\begin{equation}\label{dheo}
\widetilde{\Psi}=\F^{\pm}\mathbb{L}_d\circ \Psi\circ
(\F^{\pm}\mathbb{L}_d)^{-1}\;=\F^{+}\mathbb{L}_d\circ (\F^{-}\mathbb{L}_d)^{-1}.
\end{equation}

\subsubsection{Example: Discrete Euler-Poincar\'e equations}
Let $G$ be a Lie groupoid over $Q=\{\mathfrak{e}\}$, the identity of $G$. Given $\xi\in {\frak g}$ we have
the left and right invariant vector fields
\[
\lvec{\xi}(g)=(T_{\frak e}\ell_g)(\xi)\hbox{ and }\rvec{\xi}(g)=(T_{\frak e}
r_g)(\xi), \; \; \mbox{ for } g \in G.
\]

Given a discrete Lagrangian $\mathbb{L}_d: G\rightarrow \R$ its discrete
Euler-Poincar\'e equations are
\[
(T_{\frak e}\ell_{g_k})(\xi)(\mathbb{L}_d)-(T_{\frak e}r_{g_{k+1}})(\xi)(\mathbb{L}_d)=0,\;
\hbox{ for all } \xi\in {\frak g} \mbox{ and } g_{k}, g_{k+1} \in
G,
\]
that is, $$(\ell_{g_k}^*\hbox{d}\mathbb{L}_d)({\frak e}) = (r_{g_{k+1}}^*\hbox{d}\mathbb{L}_d)({\frak e})$$ (see
\cite{BoSu,Mars3,MaPeSh}).


 A way to  discretize a continuous problem is by using a \emph{retraction map} $\tau: {\mathfrak g}\to G$, which is an analytic local diffeomorphism and maps a neighborhood $V\subseteq\mathfrak{g}$ of $0\in {\mathfrak g}$ to a neighborhood $U\subseteq G$ of the identity $\mathfrak{e}\in G$.
We have that  $\tau(\xi)\tau(-\xi)=\mathfrak{e}$ for all $\xi \in \mathfrak{g}$ (see \cite{Rabee}). 

The retraction map provides a local chart on the Lie group and it  is used to express a small change in the group configuration
through a unique Lie algebra element, namely $\xi_k=\tau^{-1}(g_{k}^{-1}g_{k+1})/h$, where $h>0$ is a small enough time step, $\xi_k\in\mathfrak{g}$, $h\xi_k\in V\subseteq\mathfrak{g}$ and $g_{k},g_{k+1}\in U\subseteq G$, i.e., if $\xi_{k}$ were regarded as an average velocity between $g_{k}$ and $g_{k+1}$, then $\tau$ is an approximation of the corresponding vector field on $G$ (see \cite{KM}). 

 To derive the discrete Euler-Poincar\'e equations, one uses the {\it left-trivialized} tangent retraction map $d\tau_{\xi}:\mathfrak{g}\to\mathfrak{g}$ and its inverse $d\tau_{\xi}^{-1}:\mathfrak{g}\to\mathfrak{g}$ defined by
\begin{align*}
T_{\xi}\,\tau(\eta)=&T_{\mathfrak{e}} \ell_{\tau(\xi)}(\hbox{d}\tau_{\xi}(\eta))\equiv \hbox{d}\tau_{\xi}(\eta)\tau(\xi),\\
T_{\tau(\xi)}\tau^{-1}((T_{\mathfrak{e}}\ell_{\tau(\xi)})\eta)=&\hbox{d}\tau_{\xi}^{-1}(\eta),
\end{align*} for $\eta\in\mathfrak{g}$ (see \cite{MunteKaas}, \cite{Rabee}) .

The Lie algebra $\mathfrak{g}$ is on itself a vector space, then it is natural to consider local coordinates on $\mathfrak{g}$. We will write
$\tau(h\xi)=g$ for a enough small time step $h>0$ such that $h\xi\subseteq V$ where $V$ is a local neighborhood of $0\in {\mathfrak g}$. Fixing a basis $\{e_{\gamma}\}$ of $\mathfrak{g}$ we induce coordinates $(y^{\gamma})$ on $\mathfrak{g}$. In these coordinates, a basis of left-invariant and right-invariant vector fields is 
\begin{align*}
\lvec{e_{\gamma}}(\eta)&=T_{\tau(h\eta)}\tau^{-1}(\tau(h\eta)e_{\gamma})=
\mbox{d}\tau^{-1}_{h\eta} ({\rm Ad}_{\tau(h\eta)}e_{\gamma}),\\
\rvec{e_{\gamma}}(\eta)&=T_{\tau(h\eta)}\tau^{-1}(e_{\gamma}\tau(h\eta))=
\mbox{d}\tau^{-1}_{h\eta}(e_{\gamma}),
\end{align*}
where $\eta\in {\mathfrak g}$.

Given a Lagrangian $l: {\mathfrak g}\rightarrow \R$, the   discrete Euler-Poincar\'e equations are: 
\begin{equation}\label{eq:EP}
\lvec{e_{\gamma}}(\eta_k)(l)-\rvec{e_{\gamma}}(\eta_{k+1})(l)=0,
\end{equation} that is, 

$$(\hbox{d}\tau^{-1}_{h\eta_{k}})^{*}\left(\frac{\partial l}{\partial\xi}(\eta_k)\right)-(\hbox{d}\tau^{-1}_{-h\eta_{k+1}})^{*}\left(\frac{\partial l}{\partial\xi}(\eta_{k+1})\right)=0,$$ (see \cite{MMM2}).


\section{Second-order variational problems on Lie groupoids and optimal control applications}\label{variational}

In this section, we discuss discrete second-order Lagrangian
mechanics using techniques of variational calculus on Lie groupoids (see
\cite{IMMM} and \cite{MMM} for first order variational calculus on
Lie groupoids) and we illustrate our results with some examples and applications in the theory of optimal control of mechanical systems.

\subsection{Second-order variational problems on Lie groupoids}
Let $G$ be a Lie groupoid with structural
maps $\alpha,\beta:G\to Q$, $\epsilon:Q\to G$, $i:G\to G$ and $m:G_2\to G$.
Denote by $\tau_{AG}: AG\to Q$ the Lie algebroid associated with the Lie groupoid $G$.
\begin{definition}
A \textit{discrete second-order Lagrangian} $L_d:G_2\to\R$ is a
differentiable function defined on the set of composable
elements describing the dynamics of the mechanical
system.
\end{definition}

We denote by $G^{4}$ the product $G\times G\times G\times G$. As in the first order case, fixed $g\in G$, we define the set of admissible sequences in $G^4$ with
values in $G$ by consider $N=4$ in \eqref{sequences}, that is, 
$$C^{4}_{g}=\{(g_1,g_2,g_3,g_4)\in G^{4}\mid
(g_{k},g_{k+1})\in G_2\hbox{ for } k=1,2,3 $$ $$\hbox{ with }
g_1\hbox{ and }g_4 \hbox{ fixed and } g_{1}g_2g_3g_{4}=g\}.$$

Given a tangent vector at the point $\bar{g}=(g_1,g_2,g_3,g_4)$ to the
manifold $C^{4}_{g}$, we may write it as the tangent
vector at $t=0$ of a curve $c(t)$ in $C^{4}_{g}$, which passes through
$\bar{g}$ at $t=0$. This type of curves has the form
\begin{equation}\label{admisible}c(t)=(g_1,g_2h_2(t),h_2^{-1}(t)g_3,g_4),\end{equation} where $h_2(t)\in\alpha^{-1}(\beta(g_2))$, for all $t$, and
$h_2(0)=\epsilon(\beta(g_2)).$ The curve $c$ is called a
\textit{variation} of $\bar{g}$. Therefore we may identify the tangent space to $C^{4}_{g}$ at
$\bar{g}$ with
$$T_{\bar{g}}C^{4}_{g}\equiv\{v_2\mid
v_2\in A_{x_2}G \hbox{ where } x_2=\beta(g_2) \}.$$ The curve $v_2$
is called \textit{infinitesimal variation} of $\bar{g}$ and is the tangent vector to the $\alpha$-vertical curve $h_2$ at $t=0$.

 We define the \textit{discrete action sum}
associated with the discrete second-order Lagrangian $L_d:G_2\to\R$ by
\begin{eqnarray}\label{accion discreta}
S_{L_d}:C_{g}^{4}&\longrightarrow&\R\nonumber\\
\bar{g}&\longmapsto& \sum_{k=1}^{3}L_d(g_k,g_{k+1}).
\end{eqnarray}

 To derive the discrete equations of motion we apply Hamilton's principle of critical action. In order to do that, we need to consider the variations of the discrete action sum.

\begin{definition}[Discrete Hamilton's principle on Lie groupoids]\label{hamprinc}

Given $g\in G$, an admissible sequence
$\bar{g}\in C_g^{4}$ is a solution of the
Lagrangian system determined by $L_d:G_2\to\R$ if and only if
$\bar{g}$ is a critical point of $S_{L_d}$.
\end{definition}

\begin{proposition}
Given $g\in G$, the admissible sequence $\bar{g}=(g_1,g_2,g_3,g_4)\in C_g^{4}$ is a solution of the
Lagrangian system determined by $L_d:G_2\to\R$ if and only if $\bar{g}$ satisfies the \textit{discrete second-order Euler-Lagrange equations} for $L_d:G_2\to\R$ given by  
\begin{equation}\label{delelg}\ell^{*}_{g_{2}}\left(D_1L_d(g_{2},g_{3})+D_2
L_d(g_{1},g_{2})\right)+(r_{g_{3}}\circ
i)^{*}\left(D_1L_d(g_{3},g_{4})+D_2 L_d(g_2,g_{3})\right)=0.
\end{equation}
\end{proposition}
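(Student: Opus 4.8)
The plan is to compute the first variation $\frac{d}{dt}\big|_{t=0} S_{L_d}(c(t))$ directly and impose that it vanish for every admissible variation. Writing $x_2 = \beta(g_2) = \alpha(g_3)$ (the two agree since $(g_2,g_3)\in G_2$), I would first check that the curve $c(t) = (g_1, g_2 h_2(t), h_2^{-1}(t)g_3, g_4)$ is genuinely admissible: composability of the consecutive pairs follows from $\beta(g_2 h_2(t)) = \beta(h_2(t)) = \alpha(h_2^{-1}(t)) = \alpha(h_2^{-1}(t)g_3)$ together with the fixed endpoints, and the total product is unchanged because $h_2(t)h_2^{-1}(t) = \epsilon(x_2)$, so $g_1 g_2 g_3 g_4 = g$ is preserved. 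This legitimizes $c$ as a variation of $\bar{g}$ whose infinitesimal variation is $v_2 = \dot{h}_2(0) \in A_{x_2}G$.

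Next I would expand the action along the variation, keeping in mind that $g_1$ and $g_4$ are frozen, as
\begin{equation*}
S_{L_d}(c(t)) = L_d(g_1, g_2 h_2(t)) + L_d(g_2 h_2(t), h_2^{-1}(t) g_3) + L_d(h_2^{-1}(t) g_3, g_4).
\end{equation*}
The two elementary velocity computations that drive everything are $\frac{d}{dt}\big|_{t=0}(g_2 h_2(t)) = T_{\epsilon(x_2)}\ell_{g_2}(v_2)$, from differentiating $h_2 \mapsto \ell_{g_2}(h_2)$ at $h_2(0)=\epsilon(x_2)$, and, writing $h_2^{-1}(t)=i(h_2(t))$ so that $h_2^{-1}(t)g_3 = (r_{g_3}\circ i)(h_2(t))$, the velocity $\frac{d}{dt}\big|_{t=0}(h_2^{-1}(t)g_3) = T_{\epsilon(x_2)}(r_{g_3}\circ i)(v_2)$. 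Applying the chain rule term by term, with $D_1,D_2$ the partial derivatives of $L_d$ understood along curves lying in $G_2$: the first summand yields $D_2 L_d(g_1,g_2)$ paired with $T_{\epsilon(x_2)}\ell_{g_2}(v_2)$, the third yields $D_1 L_d(g_3,g_4)$ paired with $T_{\epsilon(x_2)}(r_{g_3}\circ i)(v_2)$, and the middle summand, whose two slots both move, contributes $D_1 L_d(g_2,g_3)$ paired with $T_{\epsilon(x_2)}\ell_{g_2}(v_2)$ together with $D_2 L_d(g_2,g_3)$ paired with $T_{\epsilon(x_2)}(r_{g_3}\circ i)(v_2)$.

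Rewriting each pairing through the corresponding cotangent pullback, e.g. $D_2 L_d(g_1,g_2)\big(T_{\epsilon(x_2)}\ell_{g_2}(v_2)\big) = \big(\ell_{g_2}^* D_2 L_d(g_1,g_2)\big)(v_2)$, and then collecting the $\ell_{g_2}^*$ terms and the $(r_{g_3}\circ i)^*$ terms separately, I obtain
\begin{equation*}
\frac{d}{dt}\Big|_{t=0} S_{L_d}(c(t)) = \Big(\ell_{g_2}^*\big(D_1 L_d(g_2,g_3) + D_2 L_d(g_1,g_2)\big) + (r_{g_3}\circ i)^*\big(D_1 L_d(g_3,g_4) + D_2 L_d(g_2,g_3)\big)\Big)(v_2).
\end{equation*}
By the discrete Hamilton's principle (Definition~\ref{hamprinc}), $\bar{g}$ is a solution precisely when this vanishes for all variations; since $v_2$ sweeps out the whole fiber $A_{x_2}G$ as $h_2$ varies, the vanishing is equivalent to equation \eqref{delelg}, which finishes the argument.

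The step I expect to be most delicate is the middle term, where one must differentiate a function whose \emph{both} arguments depend on $t$ and keep the two contributions attached to the correct translation ($\ell_{g_2}$ for the first slot, $r_{g_3}\circ i$ for the second); one must also be careful that $D_1 L_d$ and $D_2 L_d$ are only meaningful as derivatives tangent to $G_2$. The admissibility check of the first paragraph is exactly what guarantees that the relevant velocities lie in $T G_2$, so that the pairings, and hence equation \eqref{delelg}, are well defined.
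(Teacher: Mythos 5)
Your proposal is correct and follows essentially the same route as the paper: both compute the first variation of $S_{L_d}$ along the variation $c(t)=(g_1,g_2h_2(t),h_2^{-1}(t)g_3,g_4)$, pair the partial derivatives $D_1L_d,D_2L_d$ with the velocities $T\ell_{g_2}(v_2)$ and $T(r_{g_3}\circ i)(v_2)$, and use that $v_2$ ranges over all of $A_{\beta(g_2)}G$. Your version is in fact somewhat more explicit than the paper's (which merely asserts the equivalence of the four-differential expression with equation \eqref{delelg}), and your preliminary check that $c(t)$ is admissible is a detail the paper leaves implicit.
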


\textit{Proof.}  By definition \eqref{hamprinc}, $\bar{g}$ is a solution of the Lagrangian system determined by $L_d:G_2\to\R$ if it is a critical point of $S_{L_d}$. In order to characterize the critical points, we calculate,
$$\frac{d}{dt}\Big{|}_{t=0}S_{L_d}(c(t))=\frac{d}{dt}\Big{|}_{t=0}\{L_d(g_1,g_2h_2(t))+L_d(g_2h_2(t),h_2^{-1}(t)g_3)
+L_d(h_2^{-1}(t)g_3,g_4)\},$$ where $c(t)$ is the variation of $\bar{g}$ defined in \eqref{admisible}.

Then, the condition $\displaystyle{\frac{d}{dt}\Big{|}_{t=0}S_{L_d}(c(t))=0}$ is equivalent to
\begin{align}\label{condiferencial}
0=&d (L_d\circ \ell_{g_2})(\epsilon(\beta(g_2)))(v_2)+ d (L_d\circ r_{g_3}\circ i)(\epsilon(\beta(g_2)))(v_2)\nonumber\\
&+d (L_d\circ r_{g_3}\circ
i)(\epsilon(\beta(g_3)))(v_2)+ d (L_d\circ
\ell_{g_2})(\epsilon(\beta(g_1)))(v_2),
\end{align} where $v_2\in A_{\beta(g_2)}G$ is the infinitesimal variation of $\bar{g}$, $\epsilon(\beta(g_2))=h_2(0)$ and $\ell_{g}$ and $r_g$ were defined on Definition \ref{leftrighttranslationgroupoids}.


Therefore, $\bar{g}$ is a solution of the Lagrangian system determined by
the discrete second-order Lagrangian $L_d:G_2\to\R$ if and only if it satisfies equations \eqref{condiferencial}, that is, $\bar{g}$ is a solution of the Lagrangian system
determined by $L_d:G_2\to\R$ if and only if $\bar{g}$ satisfies
\begin{equation}\label{delelg}\ell^{*}_{g_{2}}\left(D_1L_d(g_{2},g_{3})+D_2
L_d(g_{1},g_{2})\right)+(r_{g_{3}}\circ
i)^{*}\left(D_1L_d(g_{3},g_{4})+D_2 L_d(g_2,g_{3})\right)=0.
\end{equation}
\hfill$\square$

The  equations given above are called \textit{discrete second-order
Euler-Lagrange equations}.

\subsubsection{Example: Discrete second-order Euler-Lagrange equations on the pair groupoid.}
Let $Q\times Q\rightrightarrows Q$ be the Banal groupoid. An
admissible path is the $4$-tuple
$((q_1,q_2),(q_2,q_3),(q_3,q_4),(q_4,q_5))\in
C^{4}_{(q_1,q_5)}$.
$(Q\times Q)_{2}$  is isomorphic to $3Q$ where inclusion of $3Q$ into $(Q\times Q)_{2}$ is given by the map
$(q,\tilde{q},\bar{q})\hookrightarrow((q,\tilde{q}),(\tilde{q},\bar{q}))$.
Applying Hamilton's principle for the discrete second-order Lagrangian  $L_d:(Q\times Q)_{2}\simeq 3Q\to\R$
given by $L_{d}((q_k,q_{k+1}),(q_{k+1},q_{k+2}))=L_d(q_k,q_{k+1},q_{k+2})$, one gets
$$\frac{d}{dt}\Big{|}_{t=0}S_{L_d}=\frac{d}{dt}\Big{|}_{t=0}\sum_{k=1}^{3}L_d(q_k,q_{k+1},q_{k+2}).$$
Hence, the path $((q_1,q_2),(q_2,q_3),(q_3,q_4),(q_4,q_5))\in
C^{4}_{(q_1,q_5)}$  is a critical point of $S_{L_d}$ if and only if
it satisfies 
\begin{equation}\label{dsoel}D_3L_d(q_{1},q_{2},q_3)+D_2L_d(q_{2},q_{3},q_{4})+D_1L_d(q_3,q_{4},q_{5})=0.
\end{equation} for $(q_1,q_2)$ and $(q_4,q_5)$ fixed points in the Banal groupoid.

These equations are the \textit{discrete second-order Euler-Lagrange
equations} for $L_d:Q\times Q\times Q\to\R$ (see for example \cite{belema}).

\subsubsection{Example: Discrete second-order Euler-Poincar\'e equations} Let $G$ be a Lie group, that is $G$ is a Lie groupoid over the identity element  $\{\mathfrak{e}\}$ of $G$. Given $g\in G$
and a discrete second-order Lagrangian $L_d:G_2\to\R$, 
the solution for the Lagrangian system determined by the discrete Lagrangian $L_d$ are
\begin{align*}0=&\ell^{*}_ {g_{k}}D_{1}L_{d}(g_k,g_{k+1})+\ell^{*}_{g_{k}}D_{2}L_{d}(g_{k-1},g_{k})-r^{*}_{g_{k+1}}D_{2}L_{d}(g_{k},g_{k+1})\\&-r^{*}_{g_{k+1}}D_{1}L_{d}(g_{k+1},g_{k+2})\end{align*} for an admisible sequence $(g_{k-1},g_k,g_{k+1},g_{k+2})\in \mathcal{C}^4_{g}$ with $g=g_{k-1}g_kg_{k+1}g_{k+2}$ and $g_1,g_4$ fixed points in $G$. 

The equations given above are the \textit{discrete second-order Euler-Poincar\'e equations} (see for example \cite{CojiMdD} and \cite{BHM}).


\subsubsection{Example: Discrete second-order Euler-Lagrange equations on an action Lie groupoid}

Let $H$ be a Lie group, and $Q$ a differentiable manifold. Let $\varphi:Q\times H\to Q$ be a right action, $\varphi(q,h)=qh$. We consider the action Lie groupoid, $G=Q\times H$ over $Q$. The set of composable elements is determined by $$G_2=\{((q_1,h_1),(q_2,h_2))\mid q_{2}=q_1 h_1\}.$$ 

If $(q,h)\in G$, the left-translation $\ell_{(q,h)}:\alpha^{-1}(qh)\to\alpha^{-1}(q)$ and the right-translation $r_{(q,h)}:\beta^{-1}(q)\to\beta^{-1}(qh)$ (where $\alpha$ and $\beta$ are the source and target map of $G$) are given by $\ell_{(q,h)}(qh,h')=(q,hh')$ and $r_{(q,h)}(q(h')^{-1},h')=(q(h')^{-1},h'h)$ for  $q\in Q, h,h'\in H$.

Consider an admissible path $$((q_1,h_1),(q_1h_1,h_2),(q_1h_1h_2,h_3),(q_1h_1h_2h_3,h_4))\in\mathcal{C}_{x}^{4}$$ where $x=(q_1,h)\in Q\times H$ with $h=h_1h_2h_3h_4$  and $(q_1,h_1), (q_1h_1h_2h_3,h_4)$ are fixed in $(Q\times H)$. A discrete second-order Lagrangian is defined as $L_{d}:(Q\times H)_2\simeq Q\times H\times H\to\R$ by $$L_{d}(q_1,h_1,h_2):=L_{d}((q_1,h_1),(q_1h_1,h_2)).$$ The discrete Euler-Lagrange equations for the system determined by the discrete second-order discrete Lagrangian $L_{d}:Q\times H\times H\to\R$ are determined by 

\begin{align*}
0=&\ell_{(q_1h_1,h_{2})}^{*}\left(D_1L_{d}(q_1h_1,h_{2},h_{3})+D_{2}L_d(q_1h_1,h_{2},h_{3})+D_3L_d(q_1,h_1,h_2)\right)\\
&+(r_{(q_1h_1h_2,h_{3})}\circ i)^{*}\left(D_1L_d(q_1h_1h_2,h_{3},h_{4})+D_2L_{d}(q_1h_1h_2,h_{3},h_{4})\right.\\
&\left.+D_{3}L_d(q_1h_1,h_{2},h_{3})\right).\\
q_2=&\varphi(q_{1},h_1).
\end{align*}

 \subsection{Second-order constrained variational problems on Lie groupoids}\label{constrainedcase}
Next, we extend the previous variational principle to second-order variational problems for systems subject to second-order constraints. The constructions presented here are  interesting for applications in optimal control problem of underactuated mechanical controlled systems.

Let $L_d:G_2\to\R$ be a discrete second-order Lagrangian describing
the dynamics of a discrete mechanical system. Suppose that the dynamics is
restricted. This restriction is given by the vanishing of $s$ smooth
constraint functions $\Phi_d^{A}:G_2\to\R,$ $A=1,\ldots,s$ determining a submanifold $\mathcal{M}$ of $G_2$.

The dynamics of the second-order constrained variational problem associated with $L_d$ and $\Phi_d^{A}$ is described by the discrete constrained second-order Euler-Lagrange equations determined by considering the augmented Lagrangian $\widehat{L}_{d}:G_{2}\times\R^{s}\to\R$ given by 
$\widehat{L}_d=L_d+\lambda_{A}\Phi_d^{A}$
 where $\lambda_{A}=(\lambda_{1},\ldots,\lambda_{s})\in\R^{s}$ are Lagrange multipliers to be determined (see
subsection \ref{discreteconstrained} for an intrinsic approach).

Given $g\in G$, the set of admissible sequences is given by  $$C^{4}_{g,\mathcal{M}}=\{(g_1,g_2,g_3,g_4,\lambda^1,\lambda^2,\lambda^{3})\in G^{4}\times\R^{3s}\mid
(g_{k},g_{k+1})\in G_2, \Phi_{d}^{A}(g_k,g_{k+1})=0 $$ $$ \hbox{ for } k=1,2,3  \hbox{ with }
g_1\hbox{ and }g_4 \hbox{ fixed and } g_{1}g_2g_3g_{4}=g \}.$$

Consider the \textit{extended action sum} associated with the extended Lagrangian $\widehat{L}_{d}:G_2\times\R^{s}\to\R$ 
\begin{eqnarray}\label{accion discreta extended}
S_{\widehat{L}_d}:C_{g,\mathcal{M}}^{4}&\longrightarrow&\R\nonumber\\
(g_1,g_2,g_3,g_4,\lambda^1,\lambda^2,\lambda^3)&\longmapsto& \sum_{k=1}^{3}\left[L_d(g_k,g_{k+1})+(\lambda^{k}_A)^{T}\Phi^{A}_{d}(g_k,g_{k+1})\right]\; ,
\end{eqnarray} where $\lambda_{
A}^{k}=(\lambda_{1}^{k},\ldots,\lambda_{s}^{k})\in\R^{s}$. An easy adaptation of the variational principle \eqref{hamprinc} for the discrete extended Lagrangian $\widehat{L}_d$
can be done to obtain the discrete constrained second-order Euler-Lagrange
equations by extremizing the extended action sum $S_{\widehat{L}_d}$. The equations describing the dynamics of second-order constrained variational problems are
\begin{align}\label{constrainedso}
 0=&\Phi_d^{A}(g_{1},g_{2}),\quad 0=\Phi_d^{A}(g_{2},g_{3}),\quad 0=\Phi_d^{A}(g_{3},g_{4}) \hbox{ with }A=1,\ldots,s,\nonumber\\
0=&\ell_{g_{2}}^{*}\left(D_1L_{d}(g_{2},g_{3})+\lambda^{2}_{A}D_{1}\Phi^{A}_d(g_{2},g_{3})
+D_{2}L_{d}(g_{1},g_{2})+\lambda^{1}_{A}D_{2}\Phi^{A}_d(g_{1},g_{2})\right)\\
&+(r_{g_{3}}\circ i)^{*}\left(D_1L_{d}(g_{3},g_{4})+\lambda^{3}_{A}D_{1}\Phi^{A}_d(g_{3},g_{4})+D_{2}L_{d}(g_{2},g_{3})+\lambda^{2}_{A}D_{2}\Phi^{A}_d(g_{2},g_{3})\right).\nonumber
\end{align}

\subsection{Application to optimal control of mechanical systems}

In this section we study how to apply the second-order Euler-Lagrange equations on Lie groupoids to optimal control problems of mechanical systems defined on Lie algebroids. After introducing  optimal control control problems, we study their discretization.

\subsubsection{Optimal control problems of total-actuated mechanical systems on Lie algebroids}

Let  $(A,\llbracket\cdot,\cdot\rrbracket,\rho)$ be a Lie algebroid over
$Q$ with bundle projection $\tau_{A}:A\to Q$. The dynamics is specified fixing a Lagrangian $L:A\to\R$ (see \hyperref[ApC]{Appendix C}) . External forces are modeled,
in this case, by curves $u_F:\R\to A^{*}$ where $A^{*}$ is the dual bundle $\tau_{A^{*}}:A^{*}\to\ Q$.  

Given local coordinates $(q^i)$ on $Q$, and fixing a basis of sections $\{e_{\alpha}\}$ of $\tau_{A}:A\to Q$ we can
induce local coordinates $(q^i,y^{\alpha})$ on $A$; that is, every element $b\in A_q=\tau_A^{-1}(q)$ is expressed univocally as $b=y^{\alpha}e_{\alpha}(q)$.  The notion of admissible
curves replaces that of natural prolongation in the context of Lie algebroids.

\begin{definition}
Let  $(A,\llbracket\cdot,\cdot\rrbracket,\rho)$ be a Lie algebroid over
$Q$ with projection $\tau_{A}:A\to Q$.  A curve $\xi:I\subset\mathbb{R}\to A$ is an
\textit{admissible curve} on $A$ if 
$$\rho(\xi(t))=\frac{d}{dt}(\tau_{A}(\xi(t))).$$

In a local description, a curve $\xi$ on $A$ given by $\xi(t)=(q^{i}(t),y^{\alpha}(t))$, is admissible if $$\dot{q}^{i}=\rho_{\alpha}^{i}(q)y^{\alpha}$$
where if $b=y^{\alpha}e_{\alpha}(q)$ with $q=\tau_{AG}(b)$ then $\displaystyle{\rho(b)=\rho_{\alpha}^{i}(q)y^{\alpha}\frac{\partial}{\partial q^{i}}\Big{|}_{q}}$.
\end{definition}

It is possible to adapt the derivation of the Lagrange-d'Alembert
principle to study fully-actuated mechanical controlled systems
on Lie algebroids (see \cite{CoMa} and \cite{Ma2}). Let $q_0$ and $q_T$ fixed in $Q$, consider an admissible curve $\xi:I\subset\mathbb{R}\to A$
which satisfies the principle
$$0=\delta\int_{0}^{T}L(\xi(t))dt+\int_{0}^{T}\langle u_{F}(t),\eta(t)\rangle dt,$$
where $\eta(t)\in {A}_{\tau_{A}(\xi(t))}$ and $u_{F}(t)\in A^{*}_{\tau_{A}(\xi(t))}$ defines
the control force (where we are assuming they are arbitrary). The infinitesimal variations in the variational principle are given by
$\delta\xi=\eta^{C}$, for all time-dependent sections
$\eta\in\Gamma(\tau_{A})$, with $\eta(0)=0$ and $\eta(T)=0$, where
$\eta^{C}$ is a time-dependent vector field on $A$, the
\textit{complete lift}, locally defined by
$$\eta^C=\rho_{\alpha}^i\eta^{\alpha}\frac{\partial}{\partial q^i}+
(\dot{\eta}+\mathcal{C}^{\alpha}_{\beta\gamma}\eta^{\beta}y^{\gamma})\frac{\partial}{\partial
y^{\alpha}}$$ (see \cite{CoMa},
\cite{Eduardo}, \cite{Eduardo1} and \cite{Eduardoalg}). Here the structure functions $\mathcal{C}_{\beta\gamma}^{\alpha}$ are determined by $\lcf e_{\beta},e_{\gamma}\rcf=\mathcal{C}_{\beta\gamma}^{\alpha}e_{\alpha}$.

From the Lagrange-d'Alembert
principle one easily derives the controlled
Euler-Lagrange equations by using standard variational calculus  \begin{align*}
\frac{d}{dt}\left(\frac{\partial L}{\partial
y^{\alpha}}\right)-\rho_{\alpha}^{i}\frac{\partial L}{\partial
q^{i}}+\mathcal{C}_{\alpha\beta}^{\gamma}(q)y^{\beta}\frac{\partial
L}{\partial y^{\gamma}}=&(u_{F})_{\alpha},\\
\frac{dq^i}{dt}=&\rho_{\alpha}^{i}y^{\alpha}.
\end{align*}

The control force $u_{F}$ is chosen such that it minimizes the
cost functional
$$\int_0^{T}C(q^i,y^{\alpha},(u_{F})_{\alpha})dt,$$ where
$C:A\oplus A^{*}\to\R$ is the cost function associated with the optimal control problem.

Therefore, the optimal control problem consists on finding an admissible curve $\xi(t)=(q^{i}(t),y^{\alpha}(t))$ solution of the controlled Euler-Lagrange equations,  the  boundary conditions and minimizing the cost functional for $C: A\oplus A^{*}\to\R$. This optimal control problem can be equivalently  solved as a second-order variational problem by defining the second-order Lagrangian $\widetilde{L}:A^{(2)}\to\R$ as
\begin{equation}\label{costLtilde}\widetilde{L}(q^i,y^{\alpha},\dot{y}^{\alpha})=C\left(q^i,y^{\alpha},\frac{d}{dt}\left(\frac{\partial
L}{\partial y^{\alpha}}\right)-\rho_{\alpha}^{i}\frac{\partial
L}{\partial
q^{i}}+\mathcal{C}_{\alpha\beta}^{\gamma}(q)y^{\beta}\frac{\partial
L}{\partial y^{\gamma}}\right).\end{equation} Here $A^{(2)}$ denotes the set of admissible elements of the Lie
algebroid $A$, a subset of $ A\times TA$, given by 
$$A^{(2)}:=\{(b,v_b)\in A\times TA\mid \rho(b)=T\tau_A (v_b)\}$$
where $T\tau_A:TA\to TQ$ is the tangent map of the bundle projection. $A^{(2)}$ is considered as the substitute of the second-order tangent bundle in classical mechanics \cite{Eduardoalg}. In local coordinates, the set $A^{(2)}$ is characterized by the tuple $(q^{i},y^{\alpha},z^{\alpha},v^{\alpha})\in A\times TA$ such that 
$y^{\alpha}=z^{\alpha}$. Therefore one can consider local coordinates $(q^{i},y^{\alpha},v^{\alpha})$ on $A^{(2)}$.

The dynamics associated with the second-order Lagrangian $\widetilde{L}:A^{(2)}\to\R$ (and therefore the optimality conditions for the optimal control problem) is given by the second-order Euler-Lagrange equations on Lie algebroids  (see for example \cite{LeoTesis} and \cite{Eduardoho})
\begin{equation}\label{ecuacionesvakonomoalgebroides2}
0=\frac{d^2}{dt^2}\left(\frac{\partial \widetilde{L}}{\partial
    v^{\alpha}}\right)+\mathcal{C}_{\alpha\beta}^{\gamma}(q)y^{\beta}\frac{d}{dt}\left(\frac{\partial
    \widetilde{L}}{\partial v^{\gamma}}\right)-\frac{d}{dt}\frac{\partial
    \widetilde{L}}{\partial
    y^{\alpha}}-\mathcal{C}_{\alpha\beta}^{\gamma}(q)y^{\beta}\frac{\partial
    \widetilde{L}}{\partial y^{\gamma}}+\rho_{\alpha}^{i}\frac{\partial
    \widetilde{L}}{\partial q^{i}}, 
\end{equation} together with the admissibility condition $\displaystyle{\frac{dq^i}{dt}=\rho_{\alpha}^{i}y^{\alpha}}.$

\begin{remark}
Alternatively, one can define the Lagrangian
$\widetilde{L}:A^{(2)}\to\R$ in terms of the Euler-Lagrange operator as
$$\widetilde{L}=C\circ(\tau^{\tiny{A^{(2)}}}_{A}\oplus\mathcal{EL}(L)):A^{(2)}
\to\R,$$ where $\mathcal{EL}(L):A^{(2)}\to A^{*}$ is the \textit{Euler-Lagrange operator} which locally reads as
$$\mathcal{EL}(L)=\left(\frac{d}{dt}\frac{\partial L}{\partial y^{\alpha}}-\rho_{\alpha}^{i}\frac{\partial
L}{\partial
q^{i}}+\mathcal{C}_{\alpha\beta}^{\gamma}(q)y^{\beta}\frac{\partial
L}{\partial y^{\gamma}}\right)e^{\alpha}.$$ Here $\{e^{\alpha}\}$ is the dual basis of $\{e_{\alpha}\},$ the basis of sections of $A$ and $\tau^{\tiny{A^{(2)}}}_{A}:A^{(2)}\to A$ is the canonical projection between $A^{(2)}$ and $A$ given by the map $A^{(2)}\ni (q^{i},y^{\alpha},v^{\alpha})\mapsto(q^{i},y^{\alpha})\in A.$\hfill$\diamond$
\end{remark}

\subsubsection{Optimal control problems of underactuated mechanical systems on Lie algebroids}
Now, suppose that our mechanical control system is underactuated, that is, the number of control inputs is less than the dimension of the configuration space.
The class of underactuated mechanical systems are abundant in real life for different reasons;
for instance, as a result of design choices motivated by the search of less cost engineering
devices or as a result of a failure regime in fully actuated mechanical systems. Underactuated
systems include spacecrafts, underwater vehicles, mobile robots, helicopters, wheeled vehicles
and underactuated manipulators.
We will see that the corresponding  optimality conditions are given by the solutions of second-order constrained Euler-Lagrange equations  (see \cite{CoMdD}). 

 
Given a Lagrangian function $L:A\to\R$ and control external forces, the controlled equations for an underactuated system defined on a Lie algebroid are \begin{align*}
\frac{d}{dt}\left(\frac{\partial L}{\partial
y^{a}}\right)-\rho_{a}^{i}\frac{\partial L}{\partial
q^{i}}+\mathcal{C}_{a\beta}^{\gamma}(q)y^{\beta}\frac{\partial
L}{\partial y^{\gamma}}=&(u_{F})_{a},\\
\frac{d}{dt}\left(\frac{\partial L}{\partial
y^{A}}\right)-\rho_{A}^{i}\frac{\partial L}{\partial
q^{i}}+\mathcal{C}_{A\beta}^{\gamma}(q)y^{\beta}\frac{\partial
L}{\partial y^{\gamma}}=&0,\\
\frac{dq^i}{dt}=&\rho_{\alpha}^{i}y^{\alpha},
\end{align*} with $\{\alpha\}=\{A,a\}$. The optimal control problem consists on finding an admissible trajectory 
$\xi(t)=(q^{i}(t),y^{A}(t),y^{a}(t))$ solution of the controlled Euler-Lagrange equations given boundary conditions and minimizing a cost functional  $C:A\oplus A^{*}\to\R$.

This optimal control problem can be solved as a constrained second-order variational problem on Lie algebroids where the second-order Lagrangian $\widetilde{L}:A^{(2)}\to\R$ is given by
\begin{equation}\label{costLtilde}\widetilde{L}(q^i,y^{\alpha},v^{a}, v^{A})=C\left(q^i,y^{\alpha},\frac{d}{dt}\left(\frac{\partial L}{\partial
y^{a}}\right)-\rho_{a}^{i}\frac{\partial L}{\partial
q^{i}}+\mathcal{C}_{a\beta}^{\gamma}(q)y^{\beta}\frac{\partial
L}{\partial y^{\gamma}}\right)\end{equation} and where the dynamics is restricted by the second order constraints $$\Phi^{A}(q^{i},y^{\alpha},v^{a},v^{A})=\frac{d}{dt}\left(\frac{\partial L}{\partial
y^{A}}\right)-\rho_{A}^{i}\frac{\partial L}{\partial
q^{i}}+\mathcal{C}_{A\beta}^{\gamma}(q)y^{\beta}\frac{\partial
L}{\partial y^{\gamma}}=0.$$

The optimality conditions for the optimal control problem are determined by the second-order constrained Euler-Lagrange equations given by considering the extended Lagrangian $\widehat{L}=\widetilde{L}+\lambda_{A}\Phi^{A}:A^{(2)}\times\R^{s}\to\R$ where $\lambda_{A}=(\lambda_1,\ldots,\lambda_s)\in\R^s$ are the Lagrange multipliers. These equations are given by  (see \cite{LeoTesis} for more details)

\begin{align*}
0=&
\frac{d^2}{dt^2}\left(\frac{\partial \widetilde{L}}{\partial
    v^{\alpha}}
  + {\lambda}_{A}\frac{\partial\Phi^{A}}{\partial v^{\alpha}}
    \right)+\mathcal{C}_{\alpha\beta}^{\gamma}(q)y^{\beta}\frac{d}{dt}\left(\frac{\partial
    \widetilde{L}}{\partial v^{\gamma}}+ {\lambda}_{A}\frac{\partial\Phi^{A}}{\partial v^{\gamma}}\right)-\frac{d}{dt}\frac{\partial
    \widetilde{L}}{\partial
    y^{\alpha}}\\
    &
    -{\lambda}_{A}\frac{\partial\Phi^{A}}{\partial y^{\alpha}}
    -\mathcal{C}_{\alpha\beta}^{\gamma}(q)y^{\beta}\left(\frac{\partial
    \widetilde{L}}{\partial y^{\gamma}}+ {\lambda}_{A}\frac{\partial\Phi^{A}}{\partial y^{\gamma}}\right)+\rho_{\alpha}^{i}\left(\frac{\partial
    \widetilde{L}}{\partial q^{i}}+ {\lambda}_{A}\frac{\partial\Phi^{A}}{\partial q^{i}}\right), 
\\
0=&\Phi^{A}(q^{i},y^{\alpha},v^{\alpha})
\end{align*} together with the admissibility condition $\dot{q}^{i}=\rho_{\alpha}^{i}y^{\alpha}$


\subsubsection{Optimal control problems on Lie groupoids}

Now we describe the discrete optimal control problem on a Lie groupoid $G$. Let $\mathbb{L}_d:G\to\R$ be a discrete Lagrangian, an
approximation of the action corresponding to a  continuous Lagrangian $L:A\to\R$ defined on a Lie algebroid $A$, that is, 
$$\mathbb{L}_d(g_k)\simeq\int_{kh}^{(k+1)h}L(\xi(t))dt$$ where $h>0$ is the time step with $T=Nh$ and $\xi$ is an admissible curve on $A$.

The discrete controlled Euler-Lagrange equations are
\begin{equation}\label{discretecontroleq}
\ell_{g_k}^{*}\hbox{d}\mathbb{L}_d(g_k)-(r_{g_{k+1}}\circ
i)^{*}\hbox{d}\mathbb{L}_d(g_{k+1})=u_k\in A^{*}_{\beta(g_{k})}G,
\end{equation} for $k=1,\ldots,N-1$ where $g_0$ and $g_{N}$ are fixed on $G$.

We define the subset $G_{\beta}\times_{\tau_{A^{*}G}}A^{*}G$ of $G\times A^{*}G$, 
$$G_{\beta}\times_{\tau_{A^{*}G}}A^{*}G:=\{(g,u)\in G\times
A^{*}G\mid\beta(g)=\tau_{A^{*}G}(u)\}.$$ Given a discrete cost function $C_{d}:G_{\beta}\times_{\tau_{A^{*}G}}A^{*}G\to\R$, the discrete optimal control problem is determined by extremizing the
discrete cost functional
\begin{equation}\label{jd}\mathcal{J}_d(g_0,g_1,\ldots,g_{N}):=\sum_{k=0}^{N-1}C_{d}(g_k,u_k),\end{equation} 
for $(g_0,g_1,\ldots,g_{N})\in G^{N+1}$, satisfying equations
\eqref{discretecontroleq} with $(g_k,g_{k+1})\in G_2$,
$k=0,\ldots, N-1$ and with $g_0,g_1,g_{N-1},g_{N}$ and $g=g_0g_1\ldots
g_{N}\in G$ fixed points on $G$. 

Defining the discrete second-order Lagrangian
$L_d:G_2\to\R$ as \begin{equation}\label{lagordendos}
L_{d}(g_k,g_{k+1}):=C_{d}\left(g_k,\ell_{g_k}^{*}\hbox{d}\mathbb{L}_d(g_k)-(r_{g_{k+1}}\circ
i)^{*}\hbox{d}\mathbb{L}_d(g_{k+1})\right),
\end{equation} the discrete optimal control problem consists on finding a path
$(g_0,g_1,\ldots,g_N)\in G^{N+1}$ minimizing the discrete
action sum $\mathcal{J}_{d}$ for the discrete second-order
Lagrangian $L_d:G_2\to\R$ where $g_0,g_1,g_{N-1},g_N$ and
$g=g_0g_1\ldots g_N\in G$ are fixed points in $G.$

Discrete Hamilton's principle \eqref{hamprinc} states that the paths
minimizing $\mathcal{J}_d$ subject fixed points
$g_0,g_1,g_{N-1},g_N\in G$ satify the discrete second-order
Euler-Lagrange equations for $L_d:G_2\to\R$ given by
\begin{align*}\label{doptimalcontrol}
0=&\ell^{*}_{g_{k}}\left(D_1L_d(g_{k},g_{k+1})+D_2
L_d(g_{k-1},g_{k})\right)\\&+(r_{g_{k+1}}\circ
i)^{*}\left(D_1L_d(g_{k+1},g_{k+2})+D_2L_d(g_k,g_{k+1})\right).
\end{align*} Therefore, as in the continuous problem, the optimality conditions for the discrete optimal control problem are determined by the discrete second-order
Euler-Lagrange equations for $L_d:G_2\to\R$.

Alternatively, one can start with a continuous optimal control problem associated to a Lagrangian $L:A\to\R$ defined on a Lie algebroid $A$. The optimality conditions for this optimal control problem are determined by a system of fourth order  differential equations obtained from the second-order Euler-Lagrange equations associated with the Lagrangian $\widetilde{L}:A^{(2)}\to\R$ determined by the cost function as in \eqref{costLtilde}. Now, we take directly  a  discretization of the second-order Lagrangian $\widetilde{L}$ to derive $L_d:G_2\to\R$.

Finally, we would like to point out that the underactuated case follows as in the continuous case by consider a discrete second-order constrained problem as in Subsection \ref{constrainedcase} and the optimality conditions are given by the solutions of the discrete second-order constrained Euler-Lagrange equations \eqref{constrainedso}. We will illustrate this in Example \ref{rotors}.
\subsubsection{An illustrative example: Optimal control of a rigid body on SO(3)}

In this example, we show how the optimal control problem of a rigid body defined on the Lie
group $SO(3)$  can be studied using the previous constructions given before.  This example is motivated by the attitude optimal control of spacecrafts (see \cite{Leok}, \cite{S9} and references therein).

 The Lie groupoid structure of $SO(3)$ over the identity matrix $\hbox{Id}$ is given by
$$\alpha(R)=Id,\quad\beta(R)=Id,\quad \epsilon(Id)=Id,\quad i(R)=R^{-1}\hbox{ and
}m(RG)=RG$$ for $R,G\in SO(3).$
The Lie algebroid associated with the Lie groupoid $SO(3)$ is the
Lie algebra $\mathfrak{so}(3)$ over a single point, where the anchor map is zero and the bracket is the usual commutator of matrices and the set of admissible elements is identified with $\mathfrak{so}(3)\times\mathfrak{so}(3)$.  Observe that, in this case, all the elements are composable, that is, $SO(3)_{2}=SO(3)\times SO(3)$.

The equations of motion of the controlled rigid body are
\begin{equation}\label{CuerpoRigido}
\dot\Om_{1}=P_1\Om_{2}\Om_{3}+u_{1},\quad
\dot\Om_{2}=P_{2}\Om_{1}\Om_{3}+u_{2},\quad \dot\Om_{3}=P_{3}\Om_{1}\Om_{2}+u_{3},
\end{equation}
where $\Omega=(\Om_{1}, \Om_{2},\Om_{3})\in\R^{3}$ and
$\dot\Om=(\dot\Om_{1}, \dot\Om_{2},\dot\Om_{3})\in\R^3$, $u_{i}$ are
the control inputs (torques for the rigid body) for $i=1,2,3$ and
$$P_1=\frac{I_1}{I_2-I_3},\quad P_2=\frac{I_2}{I_3-I_1},\quad P_3=\frac{I_3}{I_1-I_2},$$
are constants determined by the moments of inertia of the rigid  body $I_{1},I_2,I_3$. Here, we are using the typical identification of the Lie algebra
 $\mathfrak{so}(3)$ with $\R^3$ by the hat map
$\hat\cdot:\R^{3}\to\mathfrak{so}(3)$ (see \cite{Blo} and \cite{holmbook} for
example), where with some abuse of notation, we directly identify
$\R^{3}$ with $\mathfrak{so}(3)$ by omitting the hat notation.

Our fixed boundary conditions for the optimal control problem are $(R(0),\Om(0))$ and
$(R(T),\Om(T))$, where $R(t)\in SO(3)$ is the attitude of the rigid
body subject to the reconstruction equation $\dot R=R\Om$ and variations for the attitude are given by $\delta R=R\eta$,
with $\eta$ an arbitrary curve on $\mathfrak{so}(3)$. Consider the cost functional
\[
\mathcal{J}=\frac{1}{2}\int^{T}_{0}\left(u_1^{2}+u_2^{2}+u_3^{3}\right)\,dt.
\]
From eqs. (\ref{CuerpoRigido}) we can work out $u_1$, $u_2$ and $u_3$ in terms of $\Om$
and $\dot\Om$. Consequently, we can define the function
$\widetilde{L}:\mathfrak{so}(3)\times\mathfrak{so}(3)\to\R$ by $\widetilde{L}(\Om,\dot\Om)=\frac{1}{2}u(\Om,\dot\Om)\cdot u(\Om,\dot\Om)$
where $u=(u_{1},u_{2},u_{3})$. Therefore, the Lagrangian function
$\widetilde{L}:\mathfrak{so}(3)\times\mathfrak{so}(3)\to\R$ is 
\begin{equation}\label{LagCont}
\widetilde{L}(\Om,
\dot\Om)=\frac{1}{2}\left(\left(\dot\Om_{1}-P_{1}\Om_{2}\Om_{3}\right)^{2}+\left(\dot\Om_{2}-P_{2}\Om_{1}\Om_{3}\right)^{2}
+\left(\dot\Om_{3}-P_{3}\Om_{1}\Om_{2}\right)^{2}\right).
\end{equation}
From \eqref{LagCont} the cost functional becomes into
$\displaystyle{
\mathcal{J}=\int^{T}_{0}\widetilde{L}(\Om,\dot\Om)\,dt,
}$  (see \cite{CoMdD} for the solution of this second-order variational problem in the continuous setting).

Angular velocities  and angular accelerations can be approximated by discrete trajectories $\xi_{k+1/2}\simeq\Omega(kh)$ and 
$\xi_{k,k+1}\simeq\dot{\Omega}(kh)$ respectively for $\xi_k, \xi_{k+1}\in\alg$ where $h>0$ is a fixed real number and $T=kh$ with $k=0,\ldots,N$, where we are using the notation $\xi_{k+1/2}=\frac{1}{2}(\xi_k+\xi_{k+1})$ and $\xi_{k,k+1}=\frac{1}{h}(\xi_{k+1}-\xi_k)$.  Define the second-order Lagrangian $\mathcal{L}:\alg\times\alg\to\R$ by
 $\mathcal{L}(\xi_k,\xi_{k+1})=\widetilde{L}\left(\xi_{k+1/2},\xi_{k,k+1}\right)$.

The optimal control
problem, is given by minimizing the cost function associated with the discrete second-order Lagrangian 
$L_d:SO(3)\times SO(3)\to\R$ over discrete paths on $SO(3)$ where
\begin{equation}\label{sd}
L_d(w_k,w_{k+1})=h\mathcal{L}(\ca^{-1}(w_k),\ca^{-1}(w_{k+1})).\end{equation}
Here $(w_k,w_{k+1})\in SO(3)\times SO(3)$, $\ca(h\xi_k)=w_k$ and $\ca:\alg\to SO(3)$ denote
the Cayley map for the Lie group $SO(3)$ (see \hyperref[ApD]{Appendix D}).
 
Therefore, the discrete Lagrangian is now given by 
\begin{equation*}\label{sdfinal}
L_d(w_k,w_{k+1})=h\widetilde{L}\left(\frac{\ca^{-1}(w_k)+\ca^{-1}(w_{k+1})}{2h},
\frac{\ca^{-1}(w_{k+1})-\ca^{-1}(w_k)}{h^2}\right).
\end{equation*}
%

The variational integrator for the optimal control problem is given by applying discrete  Hamilton's principle
\eqref{hamprinc} in the discrete action sum determined by the discrete 
cost 
\begin{equation}\label{coste}\mathcal{C}_d=\sum_{k=0}^{N-1}L_d(w_k,w_{k+1}).\end{equation}

Instead of working  with the discrete sum \eqref{coste} one can take
\begin{equation}\label{coste2}\mathcal{C}_d=\sum_{k=0}^{N-1}\widetilde{L}\left(\xi_{k+1/2},\xi_{k,k+1}\right)\end{equation}
in order to work in a vector space where variations of $\xi_k=\ca^{-1}(w_k)\in\alg$ are given by
(see for example \cite{MarinThesis})
\begin{eqnarray}\label{deltaOm}\nonumber
\delta\xi_{k}&=&\frac{1}{h}\left(\Ad_{w_{k}}\eta_{k+1}-\eta_{k}+\frac{h}{2}\ad_{\xi_{k}}\eta_{k}-\frac{h}{2}\ad_{\xi_{k}}(\Ad_{w_{k}}\eta_{k+1})+\frac{h^{2}}{4}\xi_{k}\eta_{k}\xi_{k}\right.\\
&&-\left.\frac{h^{2}}{4}\xi_{k}(\Ad_{w_{k}}\eta_{k+1})\xi_{k}\right).
\end{eqnarray}

\subsubsection{Example: optimal control of a heavy top with
two internal rotors}\label{rotors}

The following example illustrates the study of underactuated mechanical control systems on Lie algebroids and the construction of variational integrators for such systems. It is the optimal control problem of the
upright spinning of the heavy top (see \cite{Chang} and reference
therein).

\begin{center}
\begin{figure}[h!]
\includegraphics[scale=.45]{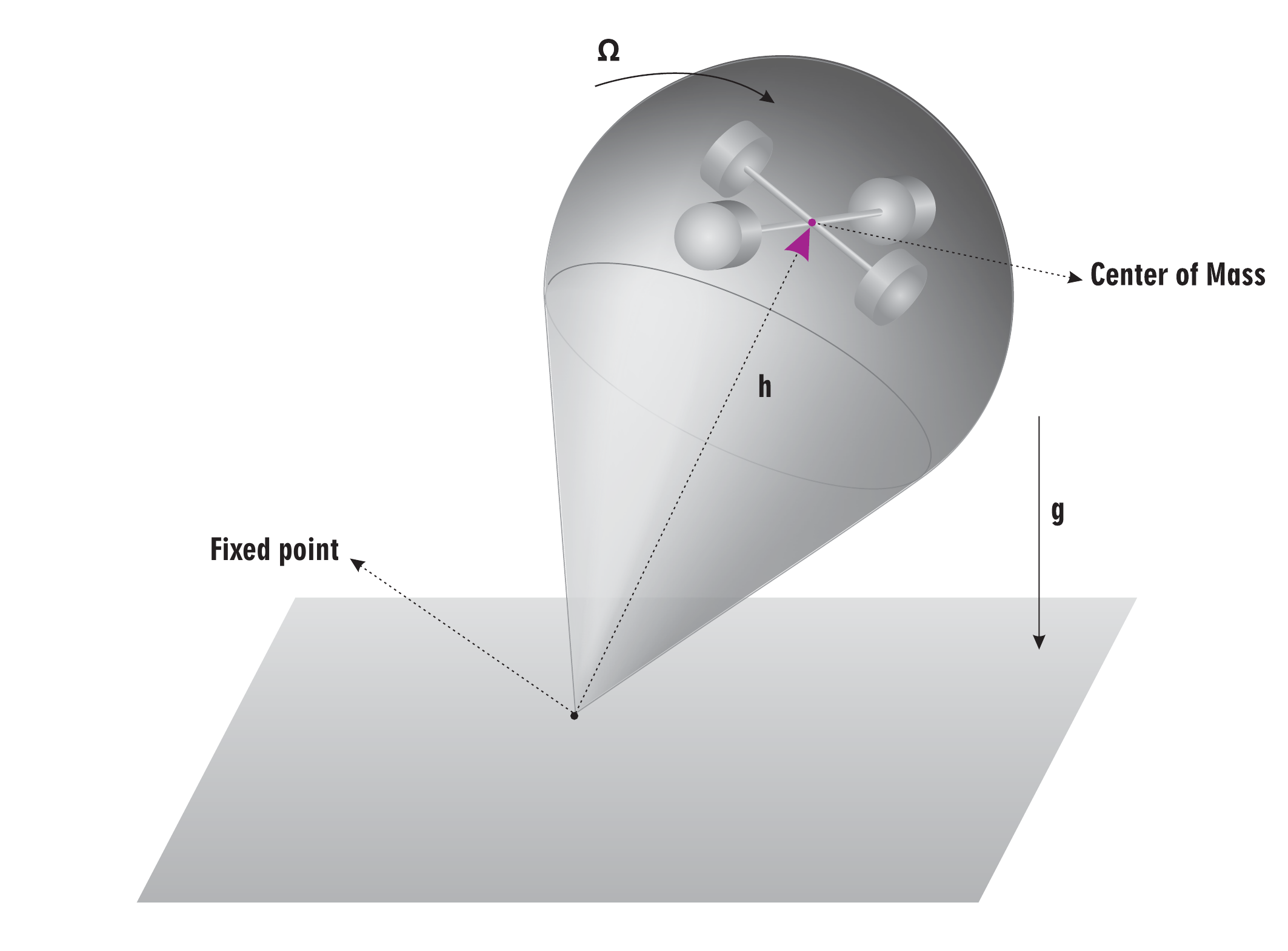} 
\caption{Heavy top with two rotors, each consisting of
two rigidly coupled disks.}\label{figureheavytop}
\end{figure}
\end{center}

Consider the top with two rotors so that each rotor's rotation axis is parallel to
the first and the second principal axes of the top as in Figure \ref{figureheavytop}. Let
$I_1,I_2,I_3$ be the moments of inertia of the top in the body fixed
frame. Let $J_1,J_2$ be the moments of inertia of the rotors around
their rotation axes and $J_{j1},J_{j2},J_{j3}$ be the moments of
inertia of the $j$-th rotor, with $j=1,2$, around the first, the
second and the third principal axes, respectively. Also we define
the quantities $\bar{I}_1=I_1+J_{11}+J_{21},$
$\bar{I}_2=I_2+J_{12}+J_{22},$ $\bar{I}_3=I_3+J_{13}+J_{23},$
$\gamma_{1}=\bar{I}_{1}+J_{1}$ and $\gamma_{2}=\bar{I}_{2}+J_{2}.$

Let $M$ be the total mass of the system, $g$ the magnitude of the
gravitational acceleration and $h$ the distance from the origin to
the center of mass of the system.

The system is modeled on the transformation Lie algebroid
$A=\mathbb{R}^{3}\times\mathfrak{so}(3)\times
T(\mathbb{S}^{1}\times\mathbb{S}^{1})$ over the manifold 
$Q=\mathbb{R}^{3}\times(\mathbb{S}^{1}\times\mathbb{S}^{1})$, where the
anchor map $\rho:\mathbb{R}^{3}\times\mathfrak{so}(3)\times
T(\mathbb{S}^{1}\times\mathbb{S}^{1})\to
T(\mathbb{R}^{3}\times\mathbb{S}^{1}\times\mathbb{S}^{1})$ is locally given
by
$$\rho(\Gamma,\Omega,\theta_1,\theta_2,\dot{\theta}_1,\dot{\theta}_{2})=(\Gamma,\theta_1,\theta_2,\Gamma\times\Omega,\dot{\theta}_1,\dot{\theta}_2).$$
Here
$\Omega=(\Omega_1,\Omega_2,\Omega_3)\in\mathfrak{so}(3)\simeq\R^{3}$
is the angular velocity of the top in the body fixed frame,
$\Gamma=(\Gamma_1,\Gamma_2,\Gamma_3)\in\mathbb{R}^{3}$ represents the unit vector
with the direction opposite to the gravity as seen from the body and
$\theta=(\theta_1,\theta_2)$ is the rotation angle of rotors around
their axes. 

If we denote by $E_{i}$ $(i=1,2,3),$ the standard basis of matrices of
$\mathfrak{so}(3),$ given by $$E_1=\left(
                        \begin{array}{ccc}
                          0 & 0 & 0 \\
                          0 & 0 & -1 \\
                          0 & 1 & 0 \\
                        \end{array}
                      \right),\quad E_2=\left(
                                      \begin{array}{ccc}
                                        0 & 0 & 1 \\
                                        0 & 0 & 0 \\
                                        -1 & 0 & 0 \\
                                      \end{array}
                                    \right),\quad E_3=\left(
                                                        \begin{array}{ccc}
                                                          0 & -1 & 0 \\
                                                          1 & 0 & 0 \\
                                                          0 & 0 & 0 \\
                                                        \end{array}
                                                      \right)
$$ then the basis of sections of $A$ is given
by the elements 
\begin{align*}
X^{E_i}(\Gamma,\theta_1,\theta_2)&=\left(\Gamma, E_i,\theta_1,\theta_2,0,0\right),\\
X^{\theta_{1}}(\Gamma,\theta_1,\theta_2)&=\left(\Gamma,0,\theta_1,\theta_2,1,0\right),\\
X^{\theta_{1}}(\Gamma,\theta_1,\theta_2)&=\left(\Gamma,0,\theta_1,\theta_2,0,1\right)\end{align*}
with $i=1,2,3$.

The Lie bracket of sections of $A$ is determined by
\begin{align*}\llbracket X^{E_1},X^{E_2}\rrbracket&=X^{[E_1,E_2]}=X^{E_3},\\
\llbracket X^{E_1},X^{E_3}\rrbracket&=X^{[E_1,E_3]}=-X^{E_2},\\
\llbracket X^{E_2},X^{E_3}\rrbracket&= X^{[E_2,E_3]}=X^{E_1},\\
\llbracket X^{\theta_r}, X^{\theta_s}\rrbracket&=0,\end{align*}
with $r,s=1,2$ and
$\llbracket X^{\theta_s},X^{E_i}\rrbracket=0,$
for $s=1,2$ and $i=1,2,3$.

The reduced Lagrangian
$l:\mathbb{R}^{3}\times\mathfrak{so}(3)\times
T(\mathbb{S}^{1}\times\mathbb{S}^{1})\to\R$ is given by

$$l(\Gamma,\Omega,\dot{\theta})=\frac{1}{2}\left(
                                                \begin{array}{c}
                                                  \Omega_1 \\
                                                  \Omega_2 \\
                                                  \Omega_3 \\
                                                  \dot{\theta}_1 \\
                                                  \dot{\theta}_2 \\
                                                \end{array}
                                              \right)^{T}\left(
                                                           \begin{array}{ccccc}
                                                             \gamma_1 & 0 & 0 & J_1 & 0 \\
                                                             0 & \gamma_2 & 0 & 0 & J_2 \\
                                                             0 & 0 & \bar{I}_3 & 0 & 0 \\
                                                             J_1 & 0 & 0 & J_1 & 0 \\
                                                             0 & J_2 & 0 & 0 & J_2 \\
                                                           \end{array}
                                                         \right)\left(
                                                                  \begin{array}{c}
                                                                    \Omega_1 \\
                                                                    \Omega_2 \\
                                                                    \Omega_3 \\
                                                                    \dot{\theta}_1 \\
                                                                    \dot{\theta}_2 \\
                                                                  \end{array}
                                                                \right)-Mgh\Gamma_{3}.$$

The Euler-Lagrange equations for $l$ are given by
\begin{align*}
\frac{d}{dt}\left(\frac{\partial l}{\partial\Omega}\right)&=\frac{\partial l}{\partial\Omega}\times\Omega+Mgh\Gamma\times e_3,\\
\frac{d}{dt}\left(\frac{\partial l}{\partial\dot{\theta}_{i}}\right)&=0,\quad i=1,2;
\end{align*} together with the admissibility condition $\dot{\Gamma}=\Gamma\times\Omega.$

Next, we add controls in our picture. Each rotor can
be controlled is such way the controlled Euler-Lagrange equations are now
\begin{align*}
\frac{d}{dt}\left(\frac{\partial l}{\partial\Omega}\right)=&\frac{\partial l}{\partial\Omega}\times\Omega+Mgh\Gamma\times e_3,\\
\frac{d}{dt}\left(\frac{\partial l}{\partial\dot{\theta}_{i}}\right)=&u_i,\quad i=1,2;\\
\dot{\Gamma}=&\Gamma\times\Omega.
\end{align*}
where $e_3=(0,0,1)$.
That is, \begin{align*}
\gamma_1\dot{\Omega}_1+J_1\ddot{\theta}_1-\gamma_2\Omega_2\Omega_3+\dot{\Omega}_3\bar{I}_3\Omega_2=&Mgh\Gamma_2,\\
\gamma_2\dot{\Omega}_2+J_2\ddot{\theta}_2+\gamma_1\Omega_1\Omega_3-J_1\dot{\theta}_1\Omega_3=&-Mgh\Gamma_1,\\
\bar{I}_3\dot{\Omega}_3-\gamma_1\Omega_1\Omega_2-J_1\dot{\theta}_1\Omega_2+\gamma_2\Omega_2\Omega_1+J_2\dot{\theta}_2\Omega_1=&0,\\
J_{1}(\dot{\Omega}_1+\ddot{\theta}_1)=&u_1,\\
J_2(\dot{\Omega}_2+\ddot{\theta}_2)=&u_2,
\end{align*} together with the admissibility conditions $$
\dot{\Gamma}_1=\Gamma_2\Omega_3-\Gamma_3\Omega_2,\quad
\dot{\Gamma}_2=\Gamma_3\Omega_1-\Gamma_1\Omega_3,\quad
\dot{\Gamma}_3=\Gamma_1\Omega_2-\Gamma_2\Omega_1
$$ where $\Gamma=(\Gamma_1,\Gamma_2,\Gamma_3)\in\mathbb{R}^{3}.$

The optimal control problem consists on finding an admissible curve
$\gamma(t)=(\Gamma(t),\Omega(t),\theta(t),u_{i}(t))$ of the state
variables and control inputs, satisfying the controlled equations given above, the boundary conditions and minimizing the cost functional
$$\mathcal{J}=\frac{1}{2}\int_{0}^{T}\left(u_{1}^{2}+u_{2}^{2}\right) dt.$$ 

This optimal control problem is equivalent to solve the
second-order variational problem determined by 
$$\min_{(\Omega(\cdot),\theta(\cdot),u(\cdot))}\int_{0}^{T}\widetilde{L}\left(\Omega,\theta,\dot{\Omega},\dot{\theta},\ddot{\theta}\right) dt,$$
and subjected to the second-order constraints
$\Phi^{A}:\mathbb{R}^{3}\times\mathbb{R}^{3}\times 2\mathfrak{so}(3)\times
T^{(2)}(\mathbb{S}^{1}\times\mathbb{S}^{1})\to\R,$ $A=1,\ldots,3$;
\begin{align*}
\Phi^{1}\left(\Omega,\theta,\Gamma,\dot{\Omega},\dot{\theta},\dot{\Gamma},\ddot{\theta}\right)=&\gamma_1\dot{\Omega}_1+J_1\ddot{\theta}_1-\gamma_2\Omega_2\Omega_3+\dot{\Omega}_3\bar{I}_3\Omega_2-Mgh\Gamma_2=0,\\
\Phi^{2}\left(\Omega,\theta,\Gamma,\dot{\Omega},\dot{\theta},\dot{\Gamma},\ddot{\theta}\right)=&\gamma_2\dot{\Omega}_2+J_2\ddot{\theta}_2+\gamma_1\Omega_1\Omega_3-J_1\dot{\theta}_1\Omega_3+Mgh\Gamma_1=0,\\
\Phi^{3}\left(\Omega,\theta,\Gamma,\dot{\Omega},\dot{\theta},\dot{\Gamma},\ddot{\theta}\right)=&\bar{I}_3\dot{\Omega}_3-\gamma_1\Omega_1\Omega_2-J_1\dot{\theta}_1\Omega_2+\gamma_2\Omega_2\Omega_1+J_2\dot{\theta}_2\Omega_1=0,
\end{align*} together with the admissibility condition $\dot{\Gamma}-\Gamma\times\Omega=0$ and where $\widetilde{L}:
\mathfrak{so}(3)\times\mathfrak{so}(3)\times
T^{(2)}(\mathbb{S}^{1}\times\mathbb{S}^{1})\to\R$ is given by
\begin{equation*}
\widetilde{L}\left(\Omega,\theta,\dot{\Omega},\dot{\theta},\ddot{\theta}\right)=\mathcal{C}\left(\Omega,\theta,\dot{\Omega},\dot{\theta},\frac{d}{dt}\left(\frac{\partial l}{\partial\dot{\theta}}\right)\right)
=\frac{J_{1}^{2}}{2}(\dot{\Omega}_1+\ddot{\theta}_1)^{2}+\frac{J_2^{2}}{2}(\dot{\Omega}_2+\ddot{\theta}_2)^{2}
\end{equation*} where $\mathcal{C}=\frac{1}{2}(u_1^2+u_2^2)$.

Therefore, the optimality conditionsare determined by the constrained second-order Euler-Lagrange equations given by 

\begin{align*}
0=&J_{1}(\dddot{\Omega}_{1}+\theta_{1}^{(4)})+\ddot{\lambda}_{1}-\lambda_{2}\dot{\Omega}_{3}+\dot{\lambda}_{2}\Omega_{3}-\lambda_{3}\dot{\Omega}_{2},\\
0=&J_{2}(\dddot{\Omega}_{2}+\theta_{2}^{(4)})+\ddot{\lambda}_{2}-\lambda_{3}\dot{\Omega}_{1}-\dot{\lambda}_{2}\Omega_{1},\\
0=&\gamma_1\dot{\Omega}_1+J_1\ddot{\theta}_1-\gamma_2\Omega_2\Omega_3+\dot{\Omega}_3\bar{I}_3\Omega_2-Mgh\Gamma_2,\\
0=&\gamma_2\dot{\Omega}_2+J_2\ddot{\theta}_2+\gamma_1\Omega_1\Omega_3-J_1\dot{\theta}_1\Omega_3+Mgh\Gamma_1,\\
0=&\bar{I}_3\dot{\Omega}_3-\gamma_1\Omega_1\Omega_2-J_1\dot{\theta}_1\Omega_2+\gamma_2\Omega_2\Omega_1+J_2\dot{\theta}_2\Omega_1,\\
0=&A(\lambda_3B+\dot{\lambda}_3)+\lambda_3\dot{A}+J_1^{2}(\dddot{\Omega}_{1}+\theta_{1}^{(4)}-\ddot{\Omega}_1B+\dddot{\theta}_{1}B)\\
&+\lambda_2\gamma_1(\dot{\Omega}_3+\Omega_1\Omega_3+\Omega_2\Omega_3+\Omega_3^2)+\gamma_1(\dot{\lambda}\Omega_3-\ddot{\lambda}_1-\dot{\lambda}_1B),\\
0=&\dot{\lambda}_1C+(\dot{\lambda}_{3}+\lambda B)(\gamma_2\Omega_1-\gamma_1\Omega_1-J_1\dot{\theta}_1)-\ddot{\lambda}_2\gamma_2+\lambda_1\dot{C}\\
&-\lambda_3(\gamma_2\dot{\Omega}_1-\gamma_1\dot{\Omega}_1-J_{1}\ddot{\theta}_1)-J_2^2(\dddot{\Omega}_2+\theta_2^{(4)}+B)+B(\lambda_1C-\gamma_2\dot{\lambda}_2)\\
0=&(\dot{\lambda}_2+B\lambda_2)(\gamma_1\Omega_1-J_1\dot{\theta_1})-B\lambda_1(\gamma_2\Omega_2+\bar{I}_3\dot{\Omega}_2)-B\bar{I}_3(\dot{\lambda}_1\Omega_2+\dot{\lambda}_3)\\
&+\lambda_2(\gamma_1\dot{\Omega}_1-J_1\ddot{\theta}_1)-\gamma_2(\dot{\lambda}_1\Omega_2+\lambda_1\dot{\Omega}_2)-\bar{I}_3(\ddot{\lambda}_1\Omega_2-\ddot{\lambda}_3-2\dot{\lambda}_1\dot{\Omega}_2-\lambda_1\ddot{\Omega}_2\\
0=&\dot{\Gamma}-\Gamma\times\Omega
\end{align*} where $A=\gamma_2\Omega_2-\gamma_1\Omega_2+J_2\dot{\theta}_2$, $B=\Omega_1+\Omega_2+\Omega_3$ and $C=\dot{\Omega}_3\bar{I}_3-\gamma_2\Omega_3$.

As before, we use the Cayley transformation on $SO(3)$ to describe the
discretization of the optimal control problem for the heavy top with internal
rotors. We redefine the Lagrangian $\widetilde{L}$ and the constraints
$\Phi^{A}$ as $\mathcal{L}: 2\mathfrak{so}(3)\times
T^{(2)}(\mathbb{S}^{1}\times\mathbb{S}^{1})\to\R$ and
$\widehat{\Phi}^{A}:T\mathbb{R}^{3}\times
2\mathfrak{so}(3)\times
T^{(2)}(\mathbb{S}^{1}\times\mathbb{S}^{1})\to\R$ by
$$\mathcal{L}(\xi_k,\theta,\xi_{k+1},\dot{\theta},\ddot{\theta}):=\widetilde{L}\left(\xi_{k+1/2},\theta,\xi_{k,k+1},\dot{\theta},\ddot{\theta}\right)$$
and
$$\widehat{\Phi}^{A}(\Gamma,\xi_k,\theta,\dot{\Gamma},\xi_{k+1},\dot{\theta},\ddot{\theta}):=\Phi^{\alpha}\left(\Gamma,\xi_{k+1/2},\theta,\dot{\Gamma},\xi_{k,k+1},\dot{\theta},\ddot{\theta}\right),$$
where $\xi_k,\xi_{k+1}\in\mathfrak{so}(3)$ and $h>0$ is a fixed real
number with $T=kh$, $k=0,\ldots,N$.

The discrete second-order Lagrangian
$L_{d}:2\mathfrak{so}(3)\times
3(\mathbb{S}^{1}\times\mathbb{S}^{1})\to\R$ associated with $\mathcal{L}$ is given by \begin{equation*}
L_{d}(\xi_k,\theta_k^{i},\xi_{k+1},\theta_{k+1}^{i},\theta_{k+2}^i):=
h\mathcal{L}\left(\xi_{k+1/2},\theta_{k+2/3}^{i},\xi_{k,k+1},\theta_{k,k+2}^{i},\theta_{k+2/h^2}^{i}\right)
\end{equation*} for $i=1,2$ and the discrete constraints $\Phi_{d}^{A}:2\mathfrak{so}(3)\times
3(\mathbb{S}^{1}\times\mathbb{S}^{1})\to\R$ associated with $\widehat{\Phi}^{A}$ by
\begin{align*}
&\Phi_{d}^{A}(\Gamma_k,\xi_k,\theta_k^{i},\xi_{k+1},\Gamma_{k+1},\theta_{k+1}^{i},\theta_{k+2}^i):=\\
&h\widehat{\Phi}^{A}\left(\Gamma_k,\xi_{k+1/2},\theta_{k+2/3}^{i},\Gamma_{k,k+1},\xi_{k,k+1},\theta_{k,k+2}^{i},\theta_{k+2/h^2}^{i}\right)
\end{align*}
for $i=1,2$ , where
$$\theta_{k+2/3}^{i}=\frac{\theta_{k}^{i}+\theta_{k+1}^{i}+\theta_{k+2}^{i}}{3},\quad
\theta_{k,k+2}^{i}=\frac{\theta_{k+2}^{i}-\theta_k^{i}}{2h},\quad
\theta_{k+2/h^2}^i=\frac{\theta_{k+2}^{i}-2\theta_{k+1}^{i}+\theta_k^{i}}{h^2},$$
and $h\xi_k=\hbox{cay}^{-1}(\omega_k)\in\mathfrak{so}(3)$ for $\omega_k\in SO(3)$. Here
$$\xi_k=\left(
                                                                                                                                \begin{array}{ccc}
                                                                                                                                  0 & -(\xi_3)_k & (\xi_2)_k \\
                                                                                                                                  (\xi_3)_k & 0 & -(\xi_1)_k \\
                                                                                                                                  -(\xi_2)_k & (\xi_1)_k & 0 \\
                                                                                                                                \end{array}
                                                                                                                            \right)\in \mathfrak{so}(3).
$$

The geometric integrator is given by extremizing the discrete cost function defined by 
$$C_{d}=\sum_{k=0}^{N-1}\left[L_{d}(\xi_k,\theta_{k}^{i},\xi_{k+1},\theta_{k+1}^{i},\theta_{k+2}^{i})+(\lambda_{A}^{k})^{T}\Phi_{d}^{A}(\Gamma_{k},\xi_{k},\theta_{k}^{i},\Gamma_{k+1},\xi_{k+1},\theta_{k+1}^{i},\theta_{k+2}^{i})\right]$$
where $\lambda_{A}\in\R^{3}$, $A=1,\ldots,3$ are Lagrange multipliers. That is, it is given by the solutions of the discrete second-order constrained Euler-Lagrange equations associated to the discrete extended Lagrangian $\widehat{L}_d=L_d+\lambda_{A}\Phi^{A}_{d}:2\mathfrak{so}(3)\times
3(\mathbb{S}^{1}\times\mathbb{S}^{1})\times\R^{3}\to\R$ where  
\begin{align*}
L_d=&\frac{J_{1}^{2}}{2}\left((\xi_{k,k+1})_1+\theta_{k+2/h^{2}}^{1}\right)^{2}+\frac{J_{2}^{2}}{2}\left(\frac{(\xi_{k+1})_2-(\xi_k)_2}{h}+\frac{\theta_{k+2}^{2}-2\theta_{k+1}^{2}+\theta_k^{2}}{h^2}\right)^{2},\\
\Phi_{d}^{1}=&\gamma_{1}\left(\frac{(\xi_{k+1})_1-(\xi_k)_1}{h}\right)+J_{1}\left(\frac{\theta_{k+2}^{1}-2\theta_{k+1}^{1}+\theta_k^{1}}{h^{2}}\right)-MghY_{k}\\
&-\gamma_{2}\left(\frac{((\xi_k)_2+(\xi_{k+1})_2)((\xi_k)_3+(\xi_{k+1})_{3})}{4}\right)\\
&+\bar{I}_{3}\left(\frac{((\xi_{k+1})_{3}-(\xi_k)_{3})((\xi_{k+1})_2+(\xi_k)_2)}{2h}\right),\\
\Phi_{d}^{2}=&\gamma_{2}\left(\frac{(\xi_{k+1})_2-(\xi_k)_2}{h}\right)+J_{2}\left(\frac{\theta_{k+2}^{2}-2\theta_{k+1}^{2}+\theta_k^{2}}{h^{2}}\right)\\
&+\gamma_{1}\left(\frac{((\xi_k)_1+(\xi_{k+1})_1)((\xi_k)_3+(\xi_{k+1})_{3})}{4}\right)\\
&-J_1\left(\frac{(\theta_{k+2}^{1}-\theta_k^{1})((\xi_{k+1})_3+(\xi_k)_3)}{2h}\right)-MghY_{k},\end{align*}
\begin{align*}\Phi_{d}^{3}=&\bar{I}_{3}\left(\frac{(\xi_{k+1})_3-(\xi_k)_3}{h}\right)-\gamma_{1}\left(\frac{((\xi_{k+1})_{1}+(\xi_k)_1)((\xi_{k+1})_{2}+(\xi_{k})_2)}{4}\right)\\
&+\gamma_{2}\left(\frac{((\xi_{k+1})_2+(\xi_{k})_2)((\xi_{k+1})_1+(\xi_{k})_1)}{4}\right)\\&+J_{2}\left(\frac{(\theta_{k+2}^{2}-\theta_{k}^{2})((\xi_k)_1+(\xi_{k+1})_1)}{4h}\right)\\
&-J_{1}\left(\frac{(\theta_{k+2}^{1}-\theta_k^{1})((\xi_{k})_{2}+(\xi_{k+1})_2)}{4h}\right),
\end{align*} together with $\displaystyle{\Gamma_{k+1}=\Gamma_k\left(\frac{cay^{-1}(\omega_k)+cay^{-1}(\omega_{k+1})}{2h}\right)}$.


\section{Lagrangian submanifolds generating discrete
dynamics}\label{lagsubmanifoldsgroupoids}

In this section we study how a Lagrangian submanifold of a particular cotangent groupoid can be used to give a more geometric and intrinsic point of view  
of discrete
second-order dynamics. 
Moreover, we will study the preservation properties of the derived discrete implicit dynamics. We also study discrete second-order constrained systems. Particularly, from this geometrical framework, we
will analyze some geometric properties of the associated discrete flow. Finally, we will study the theory of reduction under  symmetries.

\subsection{The prolongation of a Lie groupoid over a fibration}\label{subsection prolongation}

Given a Lie groupoid $ G \rightrightarrows Q $ with structural maps $\alpha:G\to Q$, $\beta:G\to Q$, $\epsilon:Q\to G$, $i:G\to G$, $m:G_2\to G$, and a fibration
$\pi:P\to Q,$  we consider the set
$$\ppig=P_{\pi}\times_{\alpha}G_{\beta}\times_{\pi}P=\{(p,g,p')\in
P\times G\times P/\pi(p)=\alpha(g),\beta(g)=\pi(p')\}.$$ $\ppig$ has a Lie groupoid structure over $P,$ where the structural maps are
given by \begin{align*}
\alpha^{\pi}&:\ppig\longrightarrow P,\quad (p,g,p')\longmapsto p;\\
\beta^{\pi}&:\ppig\longrightarrow P,\quad (p,g,p')\longmapsto p';\\
m^{\pi}&:(\ppig)_2\longrightarrow\ppig, \quad ((p,g,p'),(p',h,p''))\longmapsto (p,gh,p'');\\
\epsilon^{\pi}&:P\longrightarrow\ppig,\quad p\longmapsto(p,\epsilon(\alpha(p)),p);\\
 i^{\pi}&:\ppig\longrightarrow\ppig, \quad (p,g,p')\longmapsto
(p',g^{-1},p).
\end{align*} $\ppig$ is called \textit{prolongation of $G$ over $\pi:P\to Q$} (See
\cite{Mack},\cite{MMM} and \cite{Sau}).

Next, we consider the prolongation $\palg$ of the Lie
groupoid $G$ over its source map $\alpha:G\to Q$, that is, one can
consider the subset of $3G:=G\times G\times G$,
$$\palg=G_{\alpha}\times_{\alpha}G_{\beta}\times_{\alpha}G=\{(g,h,r)\in
3G/\alpha(g)=\alpha(h),\beta(h)=\alpha(r)\}.$$ $\palg$ is a Lie
groupoid over $G.$ Moreover, $G_2\subseteq\palg$ where the
inclusion is given by
\begin{eqnarray*}
i_{G_2}:G_2&\hooklongrightarrow&\palg\\
(g,h)&\longmapsto&(g,g,h).
\end{eqnarray*}
Now, we construct the Lie
algebroid associated with $\palg.$ This will be identified with the
prolongation $\palag$ of $AG$ over $\alpha:G\to Q$, where $AG$ is the Lie algebroid associated with $G$ with bundle projection $\tau_{AG}:AG\to Q$.

\begin{definition}\label{defalgasoc}
The Lie algebroid associated with a prolongation of a Lie groupoid
$G$ over $\alpha$ is given by,
$$\algpalg=\{(a_{\epsilon(\alpha(g))};Y_g)\in A_{\alpha(g)}G\times T_g G\mid(T_{g}\alpha)(Y_g)=(T_{\epsilon(\alpha(g))}\beta)(a_{\epsilon(\alpha(g))})\}.$$ $\alpalg$ is a Lie algebroid over $\palg$ with bundle projection denoted by $\tau_{\alpalg}:\alpalg\to\palg$.
\end{definition}

\begin{remark}

The prolongation of a Lie algebroid over a fibration has a Lie algebroid structure (See Appendix B). If we consider the linear isomorphism
\begin{eqnarray*}
(\Psi^{\alpha})_g:\algpalg&\longrightarrow&\pgalag\subset A_{\alpha(g)}G\times
T_g G\\
(0_g,a_{\epsilon(\alpha(g))},Y_g) &\longmapsto&
(a_{\epsilon(\alpha(g))},Y_g)\; ,\quad\forall g\in G
\end{eqnarray*}then the mapping $(\Psi^{\alpha})_g,$ $g\in G$
induces an isomorphism $\Psi^{\alpha}:\alpalg\to\palag$ between the
Lie algebroids $\alpalg$ and $\palag$ (see
\cite{HiMa} and \cite{MMM} for more details about the construction of this linear isomorphism).\hfill$\diamond$
\end{remark}

From \eqref{defalgasoc} a section $Z$ of $\alpalg$ can be expressed as
$$Z(g)=(X(\alpha(g)),Y(g))$$ where $g\in G$, $X\in\Gamma(\tau_{AG})$ and
$Y$ is a vector field on $G$ such that $T\beta(X)=T\alpha(Y).$

The corresponding left-invariant and right-invariant vector fields associated with the section $Z\in\Gamma(\tau_{\alpalg})$
are
\begin{align}
\overleftarrow{Z}(g,h,r)=&(0,\overleftarrow{X}(h),Y(r)),\label{zl}\\
\overrightarrow{Z}(g,h,r)=&(-Y(g),\overrightarrow{X}(h),0),\label{zr}
\end{align} with $(g,h,r)\in\palg$.

Given a basis of sections of $AG$ one can obtain a basis of
sections of $\alpalg$, denoted by $\{Z_1,Z_2\}$, with
\begin{eqnarray}\label{z1z2} Z_1=(-X,\overrightarrow{X})\hbox{ and }
Z_2=(0,\overleftarrow{X})\end{eqnarray} where $X\in\Gamma(\tau_{AG}),
\overrightarrow{X}\in\overrightarrow{\mathfrak{X}}(G)$ and
$\overleftarrow{X}\in\overleftarrow{\mathfrak{X}}(G)$. Here, we are  using  the
notation $\overrightarrow{\mathfrak{X}}(G)$ (resp.,
$\overleftarrow{\mathfrak{X}}(G)$) for the set of right-invariant
(resp., left-invariant) vector fields on $G$. 





The next result is a direct application of the construction given before and it will be useful when we derive the discrete second-order dynamics for a second-order discrete Lagrangian.
 
\begin{lemma}\label{defsection}
Let $\{Z_1,Z_2\}$ be a basis of sections of  $A(\palg)$,
 where $Z_1=(-X,\overrightarrow{X})$ and
$Z_2=(0,\overleftarrow{X})$, $X\in\Gamma(\tau_{AG})$. For $(g,h,r)\in\palg$ the
associated left and right invariant vector fields for $Z_1$ and $Z_2$ are given by 
\begin{eqnarray*}
\overrightarrow{Z_1}(g,h,r)=
(-\overrightarrow{X}(g),-\overrightarrow{X}(h),0)\qquad&&
\overrightarrow{Z_2}(g,h,r)=
(-\overleftarrow{X}(g),0,0)\\
\overleftarrow{Z_1}(g,h,r)=(0,-\overleftarrow{X}(h),\overrightarrow{X}(r))\qquad&&
\overleftarrow{Z_2}(g,h,r)=(0, 0, \overleftarrow{X}(r)).
\end{eqnarray*}

\end{lemma}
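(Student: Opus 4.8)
The plan is to read the four formulas straight off the general expressions \eqref{zl} and \eqref{zr}, specialising them to the two basis sections. Write a generic section of $\tau_{\alpalg}$ as $Z(g)=(\bar{X}(\alpha(g)),Y(g))$, with algebroid part $\bar{X}\in\Gamma(\tau_{AG})$ and vector-field part $Y\in\mathfrak{X}(G)$ subject to the anchor compatibility $T\beta\circ\bar{X}=T\alpha\circ Y$. In this notation the formulas \eqref{zl}, \eqref{zr} for the invariant vector fields attached to $Z$ read
\begin{align*}
\overleftarrow{Z}(g,h,r)&=(0,\overleftarrow{\bar{X}}(h),Y(r)),\\
\overrightarrow{Z}(g,h,r)&=(-Y(g),\overrightarrow{\bar{X}}(h),0),
\end{align*}
for $(g,h,r)\in\palg$. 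Thus the entire task reduces to identifying, for each of $Z_1$ and $Z_2$ in \eqref{z1z2}, which slot carries the algebroid datum $\bar{X}$ and which carries the vector field $Y$, and then substituting.

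First I would treat $Z_1=(-X,\overrightarrow{X})$, for which the algebroid part is $\bar{X}=-X$ and the vector-field part is $Y=\overrightarrow{X}$. Inserting these into the two displays above and using linearity of the correspondences $X\mapsto\overleftarrow{X}$ and $X\mapsto\overrightarrow{X}$ (so $\overleftarrow{-X}=-\overleftarrow{X}$ and $\overrightarrow{-X}=-\overrightarrow{X}$) yields
\begin{align*}
\overleftarrow{Z_1}(g,h,r)&=(0,-\overleftarrow{X}(h),\overrightarrow{X}(r)),\\
\overrightarrow{Z_1}(g,h,r)&=(-\overrightarrow{X}(g),-\overrightarrow{X}(h),0).
\end{align*}
For $Z_2=(0,\overleftarrow{X})$ the algebroid part is $\bar{X}=0$ and the vector-field part is $Y=\overleftarrow{X}$; since $\overleftarrow{0}=\overrightarrow{0}=0$, the same substitution gives
\begin{align*}
\overleftarrow{Z_2}(g,h,r)&=(0,0,\overleftarrow{X}(r)),\\
\overrightarrow{Z_2}(g,h,r)&=(-\overleftarrow{X}(g),0,0),
\end{align*}
which are exactly the asserted expressions.

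The computation is entirely routine, so there is no genuine obstacle; the only points that require attention are the notational overlap and the sign bookkeeping. Inside $Z_1$ and $Z_2$ a single section $X\in\Gamma(\tau_{AG})$ feeds both the algebroid slot and the vector-field slot, so one must keep straight that it plays the role of $\bar{X}$ in one entry while determining $Y$ in the other, and that the minus sign in $Z_1=(-X,\overrightarrow{X})$ propagates through both invariant fields. As a consistency check one verifies that each left-invariant field $\overleftarrow{Z_i}$, being vertical for the source projection $(g,h,r)\mapsto g$, has vanishing first component, whereas each right-invariant field $\overrightarrow{Z_i}$, being vertical for the target projection $(g,h,r)\mapsto r$, has vanishing third component; both features are manifest in the formulas above, confirming the identification of the invariant vector fields on $\palg$.
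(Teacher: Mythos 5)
Your proof is correct and follows exactly the route the paper intends: the paper states Lemma \ref{defsection} without proof as ``a direct application of the construction given before,'' namely substituting the algebroid part and vector-field part of $Z_1=(-X,\overrightarrow{X})$ and $Z_2=(0,\overleftarrow{X})$ into the general formulas \eqref{zl} and \eqref{zr}. Your substitution and sign bookkeeping reproduce all four displayed expressions, and the verticality consistency check at the end is a nice touch.
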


\subsection{Generating Lagrangian submanifolds and dynamics on Lie groupoids}

Let $G\rightrightarrows Q$ be a Lie groupoid with source and target
map $\alpha,\beta:G\to Q$ respectively, and we consider the
prolongation of $G$ over its source map, $\palg$. We denote by $\alpha^{\alpha},\beta^{\alpha}:\palg\to
G$ the source and target maps of this Lie groupoid. Let
$\tau_{A^{*}(\palg)}:A^{*}(\palg)\rightarrow G$ be the dual of the
vector bundle associated with the Lie algebroid
$\tau_{\alpalg}:\alpalg\rightarrow G$. The Lie groupoid (cotangent groupoid)
$T^{*}(\palg)\rightrightarrows A^{*}(\palg)$ is a symplectic
groupoid (see example 3 in section \ref{examples}).

In what follows, we show how the discrete dynamics associated with a discrete second-order Lagrangian $L_d:G_2\to\R$ is generated by a Lagrangian submanifold of the cotangent groupoid $T^{*}(\palg)$. 

Remember that  given a manifold $Q$ and a function $S:Q\to\R$, the submanifold ${\rm Im }\;  \hbox{d}S\subset T^{*}Q$ is Lagrangian. There is a more general construction given to \'Sniatycki and Tulczyjew \cite{ST} (see also \cite{Tulczy1} and \cite{Tulczy2}) which we will use to generate the discrete dynamics.

\begin{theorem}[\'Sniatycki and Tulczyjew \cite{ST}]\label{tulczyjew}
Let $Q$ be a smooth manifold, $ N \subset Q $ a submanifold, and $ S
  \colon N \rightarrow {\mathbb R}$.  Then
    \begin{eqnarray*}&&\Sigma _S = \bigl\{ \mu \in T ^\ast Q \mid \pi _Q (\mu) \in N \text{
        and } \left\langle \mu, v \right\rangle = \left\langle
        \mathrm{d} S , v \right\rangle\\
&&\qquad\qquad\qquad      \hbox{ for all } v \in T N \subset T Q \hbox{ such that } \tau
      _Q (v) = \pi _Q (\mu) \bigr\}\end{eqnarray*}
  is a Lagrangian submanifold of $ T ^\ast Q $. Here $\pi_Q:T^{*}Q\to Q$ and $\tau_Q:TQ\to Q$ denote the cotangent and tangent bundle projections, respectively.

\end{theorem}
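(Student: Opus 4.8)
The plan is to establish the result in two steps. First, I would show that $\Sigma_S$ is an embedded submanifold of $T^{*}Q$ of dimension $\dim Q$, which is exactly half of $\dim T^{*}Q$. Second, I would show that the canonical symplectic form $\omega_Q=-\de\theta_Q$ (with $\theta_Q$ the Liouville $1$-form) pulls back to zero on $\Sigma_S$. An isotropic submanifold of half dimension is Lagrangian, so the two steps together finish the proof.

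For the submanifold structure, I would unwind the defining condition. Writing $\iota\colon N\hookrightarrow Q$ for the inclusion, the requirement $\pai{\mu}{v}=\pai{\de S}{v}$ for all $v\in T_xN$ (where $x=\pi_Q(\mu)\in N$) says precisely that $\mu$ restricts to $\de S_x\in T_x^{*}N$ on $T_xN$. Choosing any smooth extension $\widetilde{S}$ of $S$ to an open neighbourhood $U$ of $N$ in $Q$, this is equivalent to $\mu-\de\widetilde{S}_x$ annihilating $T_xN$, so that
$$\Sigma_S=\{\,\de\widetilde{S}_x+\nu\mid x\in N,\ \nu\in (T_xN)^{\circ}\,\},$$
where $(T_xN)^{\circ}\subset T_x^{*}Q$ is the annihilator. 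One checks this set is independent of the extension, since two extensions differ by a function vanishing on $N$, whose differential already lies in $(T_xN)^{\circ}$ along $N$. Recognising $\{(x,\nu)\mid x\in N,\ \nu\in (T_xN)^{\circ}\}$ as the conormal bundle $\nu^{*}N$ (an embedded submanifold of $T^{*}Q$ of dimension $\dim N+(\dim Q-\dim N)=\dim Q$), we see that $\Sigma_S$ is its image under the fibrewise translation $\mu\mapsto\mu+\de\widetilde{S}_{\pi_Q(\mu)}$ on $T^{*}Q|_U$, which is a diffeomorphism; hence $\Sigma_S$ is an embedded submanifold of dimension $\dim Q$.

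For isotropy I would compute the pullback of the Liouville form along the inclusion $j\colon\Sigma_S\hookrightarrow T^{*}Q$. Let $\bar{p}\colon\Sigma_S\to N$ be the well-defined restriction of $\pi_Q$, so that $\pi_Q\circ j=\iota\circ\bar{p}$. For $\mu\in\Sigma_S$ and $w\in T_\mu\Sigma_S$, the defining property of $\theta_Q$ gives
$$(j^{*}\theta_Q)(w)=\pai{\mu}{T\pi_Q(Tj\,(w))}=\pai{\mu}{T\iota(T\bar{p}\,(w))}.$$
Since $T\bar{p}(w)\in T_xN$, the defining condition of $\Sigma_S$ lets me replace $\pai{\mu}{T\iota(T\bar{p}(w))}$ by $\pai{\de S_x}{T\bar{p}(w)}=(\bar{p}^{*}\de S)(w)=\de(S\circ\bar{p})(w)$. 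Thus $j^{*}\theta_Q=\de(S\circ\bar{p})$ is exact, and therefore $j^{*}\omega_Q=-\de(j^{*}\theta_Q)=0$. Hence $\Sigma_S$ is isotropic, and being of half dimension it is Lagrangian.

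The main obstacle I anticipate is the first step: making rigorous that $\Sigma_S$ is a genuine embedded submanifold of the correct dimension, in particular that the fibrewise-affine description is independent of the chosen extension $\widetilde{S}$ and patches to a smooth object. Once that is in place the isotropy is the short intrinsic computation above. As a cross-check I would use adapted coordinates $(q^{a},q^{\mu})$ with $N=\{q^{\mu}=0\}$ and conjugate momenta $(p_a,p_\mu)$: there $\Sigma_S$ is cut out by $q^{\mu}=0$ and $p_a=\partial S/\partial q^{a}$ with $p_\mu$ free, giving dimension $\dim Q$; and the restriction of $\omega_Q=\sum_i\de q^{i}\w\de p_i$ vanishes because the surviving terms produce the symmetric Hessian $\partial^{2}S/\partial q^{a}\partial q^{c}$ contracted against the antisymmetric $\de q^{a}\w\de q^{c}$.
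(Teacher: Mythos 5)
Your argument is correct and complete. Note, however, that the paper itself does not prove this statement: it is quoted verbatim from \'Sniatycki--Tulczyjew \cite{ST} and used as a black box to generate $\Sigma_{L_d}$, so there is no in-paper proof to compare against. What you have written is the standard proof of that classical result, and both halves are sound: the identification of $\Sigma_S$ as the fibrewise translate of the conormal bundle of $N$ by $\mathrm{d}\widetilde{S}$ correctly yields an embedded submanifold of dimension $\dim Q$ (and you rightly flag and then dispose of the only delicate points, namely the existence of a local extension $\widetilde{S}$ and the independence of the resulting set from the choice of extension, since two extensions differ by a function vanishing on $N$ whose differential along $N$ lies in the annihilator of $TN$); and the computation $j^{*}\theta_Q=\mathrm{d}(S\circ\bar{p})$, hence $j^{*}\omega_Q=0$, is exactly the intrinsic way to see isotropy. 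The coordinate cross-check in adapted coordinates, with the symmetric Hessian contracted against the antisymmetric two-form, is the same argument in local form and matches the special cases the paper actually computes (e.g.\ the pair-groupoid example, where $\Sigma_{L_d}$ is cut out by $\mu_0=\partial L_d/\partial q_0$, $\mu_1+\bar{\mu}_1=\partial L_d/\partial q_1$, $\mu_2=\partial L_d/\partial q_2$ with the complementary momenta free). No gaps.
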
 


Turning back to the groupoid formulation, immediately from Theorem \eqref{tulczyjew}, the discrete second-order Lagrangian $L_d:G_2\to\mathbb{R}$
generates a Lagrangian submanifold $\Sigma_{L_d}\subset T^{*}(\palg)$ of
the symplectic Lie groupoid $(T^{*}(\palg),\omega_{\palg})$ where
$\omega_{\palg}$ denotes the canonical symplectic 2-form on
$T^{*}(\palg)$. Denoting by $i_{G_2}:G_2\hookrightarrow\palg$ the
inclusion defined by $i_{G_2}(g_1,g_2)=(g_1,g_1,g_2)$, we have 
$$\Sigma_{L_d}=\{\mu\in T^{*}(\palg)\mid i^{*}_{G_2}\mu=dL_d\}\subset
T^{*}(\palg)$$ is a Lagrangian submanifold of
$(T^{*}(\palg),\omega_{\palg}).$

The relationship among these spaces is summarized in the following
diagram

\begin{equation*}
  \xymatrix{
  &\Sigma _{L_d} \mathrel{\ar@{^(->}[r]} & T
  ^\ast(\palg)
    \ar@<.5ex>[r] ^{ \tilde{ \alpha } }  \ar@<-.5ex>[r]_{ \tilde{
        \beta } }  \ar[d]& A ^\ast (\palg) \ar[d]\\ \R &   \ar[l]_{L_d} G_2
    \mathrel{\ar@{^(->}[r]^{i_{G_2}}}& \palg \ar@<.5ex>[r] ^{\alpha^{\alpha}} \ar@<-.5ex>[r]
    _{\beta^{\alpha}}  & G  }
\end{equation*} where from now on we will denote $\tilde{\alpha}$ and $\tilde{\beta}$ the
source and target maps, respectively, of the Lie groupoid
$T^{*}(\palg)\rightrightarrows A^{*}(\palg)$.

Given an element $\mu\in T^{*}_{(g,h,r)}(\palg)\hbox{ with }(g,h,r)\in\palg$
the source and target maps of $T^{*}(\palg)$ are defined such that, for all section
$Z\in\Gamma(\tau_{A(\palg)})$,  \begin{align}
\bra \tilde{\alpha}(\mu),Z(\alpha(g))\ket=&\bra\mu,\overrightarrow{Z}(g,h,r)\ket \label{Zsections}\\
\bra\tilde{\beta}(\mu), Z(\beta(g))\ket=&\bra
\mu,\overleftarrow{Z}(g,h,r)\ket,\label{Zsections2}
\end{align} where $\overleftarrow{Z}$ and $\overrightarrow{Z}$ are
the corresponding left and right invariant vector fields associated
with the section $Z$ of $A(\palg)$ according to \eqref{zl} and
\eqref{zr}.

Denoting by
$$\gamma_{(g_k,g_{k+1})}=(\mu_{g_{k}},\tilde{\mu}_{g_{k}},\bar{\mu}_{g_{k+1}})\in T^{*}_{i_{G_2}(g_k,g_{k+1})}(\palg),$$ with $(g_k,g_{k+1})\in G_{2}$, the Lagrangian submanifold $\Sigma_{L_d}$ gives rise to the discrete second-order dynamics as we describe in the following.

\begin{definition}\label{defdynamics}
A sequence
$\gamma_{(g_{1},g_{2})},\ldots,\gamma_{(g_{N-1},g_{N})}\in
T^{*}(\palg)$ satisfy the second-order dynamics on
$\Sigma_{L_d}$ if
$\gamma_{(g_{1},g_{2})},\ldots,\gamma_{(g_{N-1},g_{N})}\in\Sigma_{L_d}$
and
$$\tilde{\alpha}(\gamma_{(g_k,g_{k+1})})=\tilde{\beta}(\gamma_{(g_{k-1},g_{k})})\hbox{ for } k=2,\ldots,N-1.$$
That is, $\gamma_{(g_{1},g_{2})},\ldots,\gamma_{(g_{N-1},g_{N})}$
are composable sequences on $T^{*}(\palg).$
\end{definition}


\begin{theorem}\label{teoremagroupoides1}
Let $G\rightrightarrows Q$ be a Lie groupoid and $L_d:G_2\to\R$ be a
discrete second-order Lagrangian.  Consider the 
Lagrangian submanifold $\Sigma_{L_d}$ of  the cotangent groupoid $T^{*}(\palg)$ generated by $L_d$. A
sequence $\gamma_{(g_{1},g_{2})},\ldots,\gamma_{(g_{N-1},g_{N})}$
satisfies the discrete second-order dynamics on $\Sigma_{L_d}$ if and only
if $\gamma_{(g_{1},g_{2})},\ldots,\gamma_{(g_{N-1},g_{N})}$ satisfy
\begin{align*}
\bra\overleftarrow{X}(g_k),\tilde{\mu}_{g_k}\ket+\bra Y(g_k),\bar{\mu}_{g_{k+1}}\ket=&\bra\overrightarrow{X}(g_{k+1}),\tilde{\mu}_{g_{k+1}}\ket-\bra Y(g_{k+1}),\mu_{g_{k+1}}\ket\\
\mu_{g_k}+\tilde{\mu}_{g_{k}}=&D_1L_d(g_k,g_{k+1})\\
\bar{\mu}_{g_{k+1}}=&D_2L_d(g_k,g_{k+1})
\end{align*} for  $k=1,\ldots,N-1$ and for any section $Z\in\Gamma(\tau_{\alpalg})$  according to \eqref{zl} and
\eqref{zr}.

\end{theorem}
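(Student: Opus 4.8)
The plan is to read off the three equations by decoding, one at a time, the two conditions in Definition~\ref{defdynamics} that characterize the second-order dynamics on $\Sigma_{L_d}$: the membership $\gamma_{(g_k,g_{k+1})}\in\Sigma_{L_d}$ will yield the two ``discrete Legendre'' relations $\mu_{g_k}+\tilde\mu_{g_k}=D_1L_d(g_k,g_{k+1})$ and $\bar\mu_{g_{k+1}}=D_2L_d(g_k,g_{k+1})$, while the composability $\tilde\alpha(\gamma_{(g_k,g_{k+1})})=\tilde\beta(\gamma_{(g_{k-1},g_k)})$ will yield the first matching equation. Each condition is itself an equivalence --- a covector is determined by its pairings, and two elements of the groupoid $T^{*}(\palg)$ agree iff their images under $\tilde\alpha,\tilde\beta$ pair equally against all sections --- so assembling the three relations over all $k$ gives the asserted ``if and only if''.

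\emph{Decoding the membership condition.} Using the description $\Sigma_{L_d}=\{\mu\in T^{*}(\palg)\mid i^{*}_{G_2}\mu=\de L_d\}$ furnished by Theorem~\ref{tulczyjew} (with $N=i_{G_2}(G_2)$ and $S=L_d$), I would test $\gamma_{(g_k,g_{k+1})}=(\mu_{g_k},\tilde\mu_{g_k},\bar\mu_{g_{k+1}})$ against tangent vectors of $N$. Since $i_{G_2}(a,b)=(a,a,b)$, its tangent map sends $(w,z)\in T_{(g_k,g_{k+1})}G_2$ to $(w,w,z)$, so $\langle\gamma_{(g_k,g_{k+1})},(w,w,z)\rangle=\langle\mu_{g_k}+\tilde\mu_{g_k},w\rangle+\langle\bar\mu_{g_{k+1}},z\rangle$, which must equal $\langle\de L_d,(w,z)\rangle=\langle D_1L_d,w\rangle+\langle D_2L_d,z\rangle$ for every admissible $(w,z)$. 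Letting $w$ range over $\ker T\beta$ with $z=0$, and $z$ over $\ker T\alpha$ with $w=0$ (both admissible on $G_2$), isolates the two slots and gives exactly the second and third displayed equations.

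\emph{Decoding the composability condition.} Here I would pair the identity $\tilde\alpha(\gamma_{(g_k,g_{k+1})})=\tilde\beta(\gamma_{(g_{k-1},g_k)})$, an equality in $A^{*}_{g_k}(\palg)$, against an arbitrary section $Z=(X,Y)$ of $\tau_{\alpalg}$ evaluated at $g_k$, using the defining relations \eqref{Zsections}--\eqref{Zsections2}. The source side becomes $\langle\gamma_{(g_k,g_{k+1})},\overrightarrow{Z}(g_k,g_k,g_{k+1})\rangle$ and the target side $\langle\gamma_{(g_{k-1},g_k)},\overleftarrow{Z}(g_{k-1},g_{k-1},g_k)\rangle$. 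Substituting $\overrightarrow{Z}(g,h,r)=(-Y(g),\overrightarrow{X}(h),0)$ and $\overleftarrow{Z}(g,h,r)=(0,\overleftarrow{X}(h),Y(r))$ from \eqref{zr} and \eqref{zl} (equivalently Lemma~\ref{defsection}) and expanding the three-slot pairing gives $-\langle\mu_{g_k},Y(g_k)\rangle+\langle\tilde\mu_{g_k},\overrightarrow{X}(g_k)\rangle$ on the source side and $\langle\tilde\mu_{g_{k-1}},\overleftarrow{X}(g_{k-1})\rangle+\langle\bar\mu_{g_k},Y(g_k)\rangle$ on the target side. Equating them and relabelling $k\mapsto k+1$ reproduces the first displayed equation at each interior node; as $Z$ (equivalently the basis $Z_1,Z_2$ of \eqref{z1z2}) is arbitrary, this pairing identity is equivalent to the groupoid equality.

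The main obstacle is the bookkeeping in the composability step: one must keep straight which of the three covector slots of each $\gamma$ pairs with which component of $\overrightarrow{Z}$ and $\overleftarrow{Z}$, and verify that the source of $(g_k,g_k,g_{k+1})$ and the target of $(g_{k-1},g_{k-1},g_k)$ are \emph{both} the base point $g_k$, so that the two sides genuinely live in $A^{*}_{g_k}(\palg)$ and their pairing against $Z(g_k)$ is meaningful. A minor secondary point, in the membership step, is that $(w,z)$ is constrained by $T\beta(w)=T\alpha(z)$ on $G_2$, so a priori the two Legendre equations hold only modulo the annihilators $(\ker T\beta)^{\circ}$ and $(\ker T\alpha)^{\circ}$; this is harmless, since that freedom is exactly the freedom in the three-covector representative of an element of $T^{*}_{(g_k,g_k,g_{k+1})}(\palg)$, and one simply fixes the representative making the equalities hold on the nose.
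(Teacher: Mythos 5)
Your proposal is correct and follows essentially the same route as the paper: membership in $\Sigma_{L_d}$ yields the two Legendre-type relations, and the composability condition $\tilde\alpha(\gamma_{(g_k,g_{k+1})})=\tilde\beta(\gamma_{(g_{k-1},g_k)})$ is decoded by pairing against sections via \eqref{Zsections}--\eqref{Zsections2} and substituting \eqref{zl}--\eqref{zr}. You are in fact more explicit than the paper on two points it glosses over --- the tangent map of $i_{G_2}$ and the annihilator ambiguity in the three-covector representative --- and your pairing $\bra Y(g_{k+1}),\bar\mu_{g_{k+1}}\ket$ matches the paper's own displayed derivation (the theorem statement's $\bra Y(g_k),\bar\mu_{g_{k+1}}\ket$ appears to be a typo, as those live at different base points).
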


\textit{Proof.}  Consider the  sequence $\gamma_{(g_{1},g_{2})},\ldots,\gamma_{(g_{N-1},g_{N})}$ in $T^{*}(\palg)$. Applying the definition of $\tilde{\alpha}$  \eqref{Zsections} and $\tilde{\beta}$  \eqref{Zsections2} to the relation $\tilde{\alpha}(\gamma_{(g_k,g_{k+1})})=\tilde{\beta}(\gamma_{(g_{k-1},g_{k})})$ for $k=2,\ldots,N-1$, we
have that for any section $Z\in\Gamma(\tau_{\alpalg})$ the sequence $\gamma_{(g_{1},g_{2})},\ldots,\gamma_{(g_{N-1},g_{N})}$ belongs to the Lagrangian submanifold $\Sigma_{L_d}$ if \begin{align}
\bra\overleftarrow{Z}(g_k,g_k,g_{k+1}),\gamma_{(g_{k},g_{k+1})}\ket=&\bra\overrightarrow{Z}(g_{k+1},g_{k+1},g_{k+2}),\gamma_{(g_{k+1},g_{k+2})}\ket
\label{dynamiceq0}\\
\mu_{g_k}+\tilde{\mu}_{g_{k}}=&D_1L_d(g_k,g_{k+1})\label{dynamiceq}\\
\bar{\mu}_{g_{k+1}}=&D_2L_d(g_k,g_{k+1}),\label{dynamiceq1}
\end{align} for $k=1,\ldots,N-1$. Using \eqref{zl} and \eqref{zr} the equations given above are equivalent to 
\begin{align}
\bra\overleftarrow{X}(g_k),\tilde{\mu}_{g_k}\ket+\bra Y(g_{k+1}),\bar{\mu}_{g_{k+1}}\ket=&\bra\overrightarrow{X}(g_{k+1}),\tilde{\mu}_{g_{k+1}}\ket-\bra Y(g_{k+1}),\mu_{g_{k+1}}\ket\label{dynamiceq2}\\
\mu_{g_k}+\tilde{\mu}_{g_{k}}=&D_1L_d(g_k,g_{k+1}\label{dynamiceq3})\\
\bar{\mu}_{g_{k+1}}=&D_2L_d(g_k,g_{k+1})\label{dynamiceq4}
\end{align} for
$k=1,\ldots,N-1$ as we claimed.
\hfill$\square$

\begin{remark}\label{remarkrelacion}
We have seen how the dynamics is only defined implicitly by a relation in
$T^{*}(\palg)$ rather that as an explicit discrete flow map. Therefore, the sequence 
$\gamma_{(g_{1},g_{2})},\ldots,\gamma_{(g_{N-1},g_{N})}\in
T^{*}(\palg)$ satisfy the discrete second-order dynamics on
$\Sigma_L$ if and only if each pair of successive elements
satisfies the relation
$$(\gamma_{(g_{k-1},g_k)},\gamma_{(g_{k},g_{k+1})})\in
(T^{*}(\palg))_{2}\cap(\Sigma_{L_d}\times\Sigma_{L_d}).$$\hfill$\diamond$
\end{remark}

Next, we show that the discrete dynamics described implicitly in Theorem \eqref{teoremagroupoides1} is equivalent to the discrete second-order Euler-Lagrange equations \eqref{delelg} given by the variational point of view.

\begin{theorem}\label{teoremagroupoides2}
Let $L_d:G_2\to\R$ be a discrete second order Lagrangian. For every
section $Z$ of $\Gamma(\tau_{A(\palg)})$ according to \eqref{zl} and
\eqref{zr} the discrete second order Euler-Lagrange
equations associated with $L_{d}$ are \begin{align*}0=&\ell^{*}_{g_{k}}\left(D_1L_d(g_{k},g_{k+1})+D_2 L_d(g_{k-1},g_{k})\right)\\&+(r_{g_{k+1}}\circ i)^{*}\left(D_1L_d(g_{k+1},g_{k+2})+D_2 L_d(g_k,g_{k+1})\right),\end{align*} for $k=2,\ldots,N-2$.
\end{theorem}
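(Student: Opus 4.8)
The plan is to start from the three conditions that characterise the dynamics on $\Sigma_{L_d}$ in Theorem \ref{teoremagroupoides1} and eliminate the auxiliary momenta $\mu_{g_k},\tilde\mu_{g_k},\bar\mu_{g_k}$, leaving an identity involving only $D_1L_d$ and $D_2L_d$. The bridge back to \eqref{delelg} is the dictionary between pairings against the invariant vector fields and the pullback operators $\ell^{*}$ and $(r\circ i)^{*}$: from \eqref{left-invariant-vf} one has $\langle\overleftarrow{X}(g),\nu\rangle=\langle X(\beta(g)),\ell^{*}_g\nu\rangle$ and from \eqref{right-invariant-vf} one has $\langle\overrightarrow{X}(g),\nu\rangle=-\langle X(\alpha(g)),(r_g\circ i)^{*}\nu\rangle$ for every $\nu\in T^{*}_gG$, so pairing with $\overleftarrow{X},\overrightarrow{X}$ is exactly $\ell^{*}$ and $-(r\circ i)^{*}$ of the covector.

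First I would specialise the dynamics equation of Theorem \ref{teoremagroupoides1} to the two basis sections $Z_1=(-X,\overrightarrow{X})$ and $Z_2=(0,\overleftarrow{X})$ of $A(\palg)$, reading off the relevant invariant vector fields from Lemma \ref{defsection}. For $Z_2$, whose first component vanishes, all $\overleftarrow{X}$- and $\overrightarrow{X}$-terms drop out and only the $Y=\overleftarrow{X}$ contributions survive, yielding the pointwise relation $\langle\overleftarrow{X}(g_{k+1}),\mu_{g_{k+1}}+\bar\mu_{g_{k+1}}\rangle=0$ for all $X$. For $Z_1$ (where $Y=\overrightarrow{X}$) one obtains an equation coupling $\tilde\mu_{g_k}$, $\bar\mu_{g_{k+1}}$, $\tilde\mu_{g_{k+1}}$ and $\mu_{g_{k+1}}$.

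Next I would substitute the two constraint equations $\mu_{g_k}+\tilde\mu_{g_k}=D_1L_d(g_k,g_{k+1})$ and $\bar\mu_{g_{k+1}}=D_2L_d(g_k,g_{k+1})$, together with their index shifts, into the $Z_1$ equation. This replaces $\tilde\mu_{g_k}$, $\bar\mu_{g_{k+1}}$ and $\tilde\mu_{g_{k+1}}+\mu_{g_{k+1}}$ by derivatives of $L_d$, leaving a single leftover term $\langle\overleftarrow{X}(g_k),\mu_{g_k}\rangle$. That term is removed using the $Z_2$ relation at the previous index, giving $\langle\overleftarrow{X}(g_k),\mu_{g_k}\rangle=-\langle\overleftarrow{X}(g_k),\bar\mu_{g_k}\rangle=-\langle\overleftarrow{X}(g_k),D_2L_d(g_{k-1},g_k)\rangle$. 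Collecting terms yields the weak form
$$\langle\overleftarrow{X}(g_k),\,D_1L_d(g_k,g_{k+1})+D_2L_d(g_{k-1},g_k)\rangle
-\langle\overrightarrow{X}(g_{k+1}),\,D_1L_d(g_{k+1},g_{k+2})+D_2L_d(g_k,g_{k+1})\rangle=0$$
for every section $X\in\Gamma(\tau_{AG})$. Applying the dictionary above and using that $X(\beta(g_k))=X(\alpha(g_{k+1}))$ ranges freely over the fiber $A_{\beta(g_k)}G$, I strip off $X$ to obtain precisely \eqref{delelg}.

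The main obstacle I expect is bookkeeping rather than conceptual: keeping track of which fiber each momentum covector lives in (the three slots of $\gamma_{(g_k,g_{k+1})}$ sit over $g_k$, $g_k$, $g_{k+1}$), performing the shifts $k\mapsto k\pm1$ in the constraint equations consistently, and getting the signs right when passing between invariant-vector-field pairings and the pullbacks $\ell^{*},(r\circ i)^{*}$ — in particular the minus sign in \eqref{right-invariant-vf}. A secondary point to verify is that $\overleftarrow{X}(g_{k+1})$ sweeps out the full $\alpha$-vertical fiber at $g_{k+1}$ as $X$ varies, which is what legitimises stripping $X$ in the final step and is guaranteed by the left-invariant trivialisation.
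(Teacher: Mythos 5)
Your proposal is correct and follows essentially the same route as the paper's proof: specialise the composability condition of Theorem \ref{teoremagroupoides1} to the basis sections $Z_1=(-X,\overrightarrow{X})$ and $Z_2=(0,\overleftarrow{X})$ via Lemma \ref{defsection}, use the $Z_2$ relation (shifted in index) together with the constraints $\mu_{g_k}+\tilde\mu_{g_k}=D_1L_d(g_k,g_{k+1})$ and $\bar\mu_{g_{k+1}}=D_2L_d(g_k,g_{k+1})$ to eliminate the momenta from the $Z_1$ relation, and then pass from the pairings against $\overleftarrow{X},\overrightarrow{X}$ to the pullbacks $\ell^{*}_{g_k}$ and $(r_{g_{k+1}}\circ i)^{*}$. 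Your explicit justification for stripping off $X$ at the end is left implicit in the paper but is the same content.
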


\textit{Proof.}  Let $Z$ be a section of $A(\palg)$ and consider the basis $\{Z_1,Z_2\}$ of section of $A(\palg)$ as in \eqref{z1z2}. Using  Lemma \eqref{defsection} in \eqref{dynamiceq0}  we get the relation
$$\bra\overleftarrow{Z_2}(g_k,g_k,g_{k+1}),\gamma_{(g_{k},g_{k+1})}\ket=\bra\overrightarrow{Z_2}(g_{k+1},g_{k+1},g_{k+2}),\gamma_{(g_{k+1},g_{k+2})}\ket$$ if and only if
 \begin{eqnarray}
 \bra\overleftarrow{X}(g_{k+1}),\bar{\mu}_{g_{k+1}}\ket&=&-\bra\overleftarrow{X}(g_{k+1}),\mu_{g_{k+1}}\ket\label{dynamiceq6}
\end{eqnarray}
for $k=1,\ldots,N-1$. Now, using the relations 
 \begin{align}
 \mu_{g_{k+1}}+\tilde{\mu}_{g_{k+1}}=&D_1L_d(g_{k+1},g_{k+2}),\label{dynamiceq7}\\
\bar{\mu}_{g_{k+1}}=&D_2L_d(g_k,g_{k+1}),
\label{dynamiceq8}
\end{align}we have, \begin{equation}\label{dynamiceq9}
 \bra\overleftarrow{X}(g_{k+1}),D_2L_d(g_k,g_{k+1})\ket=-\bra\overleftarrow{X}(g_{k+1}),D_1L_d(g_{k+1},g_{k+2})-\tilde{\mu}_{g_{k+1}}\ket.
\end{equation} Similarly, by Lemma \eqref{defsection} in \eqref{dynamiceq0},  we get the relation
$$\bra\overleftarrow{Z}_1(g_k,g_{k},g_{k+1}),\gamma_{(g_{k},g_{k+1})}\ket=\bra\overrightarrow{Z}_1(g_{k+1},g_{k+1},g_{k+2}), \gamma_{(g_{k+1},g_{k+2})}\ket
$$ if and only if
\begin{equation}\label{dynamiceq10}
\bra\overleftarrow{X}(g_k),\tilde{\mu}_{g_k}\ket-\bra\overrightarrow{X}(g_{k+1}),\bar{\mu}_{g_{k+1}}\ket=\bra\overrightarrow{X}(g_{k+1}),\mu_{g_{k+1}}\ket+\bra\overrightarrow{X}(g_{k+1}),\tilde{\mu}_{g_{k+1}}\ket.
\end{equation} for $k=1,\ldots,N-1$. Using \eqref{dynamiceq7}, \eqref{dynamiceq8} and
\eqref{dynamiceq9} equations \eqref{dynamiceq10} are equivalent to
\begin{align*}0=&\bra\overrightarrow{X}(g_{k+1}),D_1L_d(g_{k+1},g_{k+2})+D_2
L_d(g_k,g_{k+1})\ket\\&-\bra\overleftarrow{X}(g_k),D_1
L_d(g_k,g_{k+1})+D_2L_d(g_{k-1},g_{k})\ket\end{align*} i.e.,
\begin{align*}0=&\ell^{*}_{g_{k}}\left(D_1L_d(g_{k},g_{k+1})+D_2
L_d(g_{k-1},g_{k})\right)\\&+(r_{g_{k+1}}\circ
i)^{*}\left(D_1L_d(g_{k+1},g_{k+2})+D_2 L_d(g_k,g_{k+1})\right)\end{align*} for
$k=2,\ldots,N-2$, after a shifting of the indexes, as we claimed. \hfill$\square$



\begin{example} Let $G$ be a Lie group and let
$L_d:G_2=G\times G\rightarrow\mathbb{R}$ be a discrete second-order Lagrangian. The prolongation of $G$ over its source map, $\palg$, is a Lie groupoid over $G$ and it can be identified with three copies of $G$, that is, $\mathcal{P}^{\alpha}G\simeq 3G$. We construct a Lagrangian submanifold
of the cotangent groupoid $T^{*}(\palg)$ as \begin{equation}\label{sigmalg}\Sigma_{L_d}=\{\mu\in
T^{*}(\palg)\mid i^{*}_{\palg}\mu=dL_d\}\subseteq T^{*}(\palg)\end{equation}
where $i_{\palg}:G_2\to\palg$ is the inclusion given by $i_{\palg}(g,h)=(g,g,h)$ with $(g,h)\in G_2$.


Observe that  $\overleftarrow{X}(g_k)=T\ell_{g_k}(X),\; 
\overrightarrow{X}(g_k)=-T(r_{g_k}\circ i)(X)=Tr_{g_k}(X).$ Therefore, we have that a sequence 
  $\gamma_{(g_1,g_2)},\ldots,\gamma_{(g_{N-1},g_N)}$, where $\gamma_{(g_k,g_{k+1})}\in T^{*}_{i_{G_2}(g_k,g_{k+1})}(\palg),$ with $(g_k,g_{k+1})\in G_{2}$  for $k=1,\ldots,N-1$, satisfies the discrete second-order dynamics on $\Sigma_{L_d}$ if
\begin{align*}\label{dsoepeq}0=&\ell^{*}_{g_k}D_1L_d(g_k,g_{k+1})+\ell^{*}_{g_k}D_2L_d(g_{k-1},g_k)
-r_{g_{k+1}}^{*}D_2L_d(g_k,g_{k+1})\\&-r^{*}_{g_{k+1}}D_1L_d(g_{k+1},g_{k+2}),
\end{align*}  that is, $(g_k,g_{k+1})\in G_{2}$ for $k=1,\ldots,N-1$ is a solution of the discrete second-order Euler-Poincar\'e equations for the discrete second-order Lagrangian $L_{d}:G_{2}\to\R$.

\end{example}

\begin{example}
Let $Q$ be a differentiable manifold, consider the pair groupoid $Q\times Q\rightrightarrows Q$, where
the source and target maps are given by the projections onto the
fist and second factor, respectively. The set of admissible elements
is given by
$$(Q\times Q)_{2}=\{((q_{0},q_1),(\bar{q}_1,q_2))\in (Q\times Q)\times (Q\times Q)\mid q_1=\bar{q}_1\}\simeq 3Q.$$
The prolongation Lie groupoid is a Lie groupoid over $Q\times Q$ given by 
$$\mathcal{P}^{\alpha}(Q\times Q)=\{((q_0,q_1),(q_2,q_3),(q_4,q_5))\in 3(Q\times Q)\mid
q_1=q_2\hbox{ and }q_3=q_4\}\simeq 4Q,$$ where the inclusion
of $(Q\times Q)_{2}$ into $\mathcal{P}^{\alpha}(Q\times Q)$ is given by
\begin{eqnarray*}
i_{3Q}: (Q\times Q)_{2}\simeq 3Q&\hooklongrightarrow&\mathcal{P}^{\alpha}(Q\times Q)\simeq 4Q\\
(q_0,q_1,q_2)&\longmapsto&(q_0,q_1,q_1, q_2).
\end{eqnarray*} Given $L_d:(Q\times Q)_2\to\R$, a discrete second-order Lagrangian, we
construct the Lagrangian submanifold $\Sigma_{L_d}$ of the cotangent
groupoid $(T^{*}(\mathcal{P}^{\alpha}(Q\times Q)),\omega_{\mathcal{P}^{\alpha}(Q\times Q)})$ over $A^{*}(\mathcal{P}^{\alpha}(Q\times Q))$, where $\omega_{\mathcal{P}^{\alpha}(Q\times Q)}$ denotes the
canonical symplectic 2-form on $T^{*}(\mathcal{P}^{\alpha}(Q\times Q))$, by
$$\Sigma_{L_d}=\{\mu\in T^{*}(\mathcal{P}^{\alpha}(Q\times Q))\mid i_{3Q}^{*}\mu=dL_d\}.$$ Here $\mu=\mu_0 dq_0+\mu_1 dq_1+\bar{\mu}_1 d\bar{q}_1+\mu_2 dq_2$. Therefore, $\mu\in\Sigma_{L_d}$ if it satisfies
$$
\mu_0=\frac{\partial L_d}{\partial q_0}(q_0,q_1,q_2),\quad
\mu_1+\bar{\mu}_1=\frac{\partial L_d}{\partial q_1}(q_0,q_1,q_2),\quad
\mu_2=\frac{\partial L_d}{\partial q_2}(q_0,q_1,q_2).$$

Using the source and target map given by \begin{eqnarray*}
\tilde{\alpha}&:&T^{*}(\mathcal{P}^{\alpha}(Q\times Q))\longrightarrow T^{*}(Q\times Q)\quad
(\mu_0,\mu_1,\bar{\mu}_1,\mu_2)\longrightarrow
(-\mu_0,-\mu_1)\\
\tilde{\beta}&:&T^{*}(\mathcal{P}^{\alpha}(Q\times Q))\longrightarrow T^{*}(Q\times Q)\quad
(\mu_0,\mu_1,\bar{\mu}_1,\mu_2)\longrightarrow (\bar{\mu}_1,\mu_2);
\end{eqnarray*} we have that the second-order discrete dynamics on $\Sigma_{L_d}$ holds if and only if
$$D_2L_d(q_{1},q_{2},q_{3})+D_1L_d(q_2,q_{3},q_{4})+D_3L_d(q_{0},q_{1},q_{2})=0.$$

\end{example}

\subsection{Regularity conditions and Poisson structure}

We have seen how the dynamics is implicitly defined by a relation on
$T^{*}(\palg)$ rather than an explicitly defined map and pointed out that $\gamma_{(g_{1},g_{2})},\ldots,\gamma_{(g_{N-1},g_{N})}\in
T^{*}(\palg)$ satisfies the discrete second-order dynamics if and
only if for each pair of successive elements in $T^{*}(\palg)$ they
satisfy
\begin{equation}\label{relacionMMS}
(\gamma_{(g_{k},g_{k+1})},\gamma_{(g_{k+1},g_{k+2})})\in (T^{*}(\palg))_2\cap (\Sigma_{L_d}\times\Sigma_{L_d}),\quad k=1,\ldots,N-2.\end{equation}

Weinstein \cite{We} raised the question of how
regularity results for the pair groupoid $Q\times Q$ might be
generalized to arbitrary Lie groupoids $G\rightrightarrows Q$, and
this question was answered by Marrero et al. (see, Theorem
4.13 in \cite{MMM}). Here, we study an extension of this problem to discrete second-order systems
following the work of Marrero et al. \cite{MMS} for first order systems. The problem consists on finding under which conditions the 
relation \eqref{relacionMMS} is the graph of an explicit flow
$$\gamma_{(g_{k-1},g_{k})}\longmapsto\gamma_{(g_{k},g_{k+1})}$$
(at least locally) and what properties have such map.

Consider the source map of the cotangent groupoid
$T^{*}(\palg)$ restricted to the Lagrangian submanifold $\Sigma_{L_d}$, that is,
$\tilde{\alpha}\mid_{\Sigma_{L_d}}:\Sigma_{L_d}\to A^{*}(\palg).$ If this
map is a local diffeomorphism, then the Lagrangian flow is locally
given by $$\Gamma_{L_d}=(\tilde{\alpha}\mid_{\Sigma_{L_d}})^{-1}\circ
\tilde{\beta}\mid_{\Sigma_{L_d}}:\Sigma_{L_d}\longrightarrow\Sigma_{L_d}.$$

\begin{theorem}[Marrero, Mart\'in de Diego and Stern \cite{MMS}]\label{theoremequivalent}
Let $\Gamma$ a symplectic groupoid over a manifold $P$ with source and target maps $\alpha$ and $\beta$, respectively. Let $\Sigma$ be a Lagrangian submanifold $\Sigma\subset \Gamma$. Then the restricted source map  $\alpha\mid_{\Sigma}:\Sigma\to P$ is a local diffeomorphism if and only if the restricted target map $\beta\mid_{\Sigma}:\Sigma\to P$ is a local diffeomorphism.
\end{theorem}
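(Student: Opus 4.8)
The plan is to reduce the global statement to a pointwise linear-algebra computation in the symplectic vector space $(T_\mu\Gamma,\omega_\mu)$ at an arbitrary point $\mu\in\Sigma$, exploiting two structural facts about symplectic groupoids together with the Lagrangian condition on $\Sigma$.

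First I would record the dimensions. Since $\Gamma\rightrightarrows P$ is a symplectic groupoid we have $\dim\Gamma=2\dim P$, and since $\alpha,\beta$ are submersions the source and target fibres satisfy $\dim\Ker T_\mu\alpha=\dim\Ker T_\mu\beta=\dim P$; moreover, because $\Sigma$ is Lagrangian, $\dim\Sigma=\tfrac12\dim\Gamma=\dim P$. Writing $n=\dim P$, the map $\alpha|_{\Sigma}$ is thus a map between manifolds of equal dimension $n$, so by the inverse function theorem it is a local diffeomorphism at $\mu$ if and only if $T_\mu(\alpha|_{\Sigma})\colon T_\mu\Sigma\to T_{\alpha(\mu)}P$ is an isomorphism; both spaces having dimension $n$, this is equivalent to injectivity, i.e. to $\Ker T_\mu\alpha\cap T_\mu\Sigma=\{0\}$. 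The analogous statement holds for $\beta|_{\Sigma}$, so the theorem reduces to proving, at each $\mu$,
\begin{equation*}
\Ker T_\mu\alpha\cap T_\mu\Sigma=\{0\}\iff \Ker T_\mu\beta\cap T_\mu\Sigma=\{0\}.
\end{equation*}

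The key input is the symplectic orthogonality of the source and target fibres of a symplectic groupoid: at every point one has $(\Ker T_\mu\alpha)^{\omega}=\Ker T_\mu\beta$, where $(\cdot)^{\omega}$ denotes the symplectic orthogonal complement in $(T_\mu\Gamma,\omega_\mu)$. I would then set $V=T_\mu\Sigma$ (Lagrangian, so $V^{\omega}=V$), $A=\Ker T_\mu\alpha$ and $B=\Ker T_\mu\beta=A^{\omega}$, and use the standard identity $(A\cap V)^{\omega}=A^{\omega}+V^{\omega}=B+V$ together with $\dim U^{\omega}=2n-\dim U$. This gives
\begin{equation*}
\dim(A\cap V)=2n-\dim(B+V)=2n-\big(\dim B+\dim V-\dim(B\cap V)\big)=\dim(B\cap V),
\end{equation*}
using $\dim B=\dim V=n$. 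Hence $A\cap V$ and $B\cap V$ always have the same dimension, and in particular one is trivial exactly when the other is, which is the required equivalence.

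The routine parts are the dimension bookkeeping and the linear identity $(A\cap V)^{\omega}=A^{\omega}+V^{\omega}$. The main obstacle — indeed the only substantive point — is establishing the fibre orthogonality $(\Ker T\alpha)^{\omega}=\Ker T\beta$. This is the structural heart of a symplectic groupoid and follows from the defining condition that $\mathrm{graph}(m)$ is a Lagrangian submanifold of $\Gamma\times\Gamma\times\Gamma^{-}$; I would either cite it as a known property of symplectic groupoids or derive it by differentiating the multiplicativity of $\omega$ along the identity section and using that $\epsilon(P)$ is Lagrangian. Once that fact is in hand, the equivalence follows immediately from the dimension count above and holds pointwise, hence globally on $\Sigma$.
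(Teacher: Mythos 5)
Your proof is correct. Note that the paper does not actually prove this theorem: it is imported verbatim from Marrero--Mart\'in de Diego--Stern \cite{MMS} and used as a black box, so there is no in-paper proof to compare against. Your argument --- reduce to the pointwise statement $\Ker T_\mu\alpha\cap T_\mu\Sigma=\{0\}\iff\Ker T_\mu\beta\cap T_\mu\Sigma=\{0\}$ and deduce the equality $\dim(\Ker T_\mu\alpha\cap T_\mu\Sigma)=\dim(\Ker T_\mu\beta\cap T_\mu\Sigma)$ from the fibre orthogonality $(\Ker T_\mu\alpha)^{\omega}=\Ker T_\mu\beta$ together with $(T_\mu\Sigma)^{\omega}=T_\mu\Sigma$ --- is the standard proof of this fact and is essentially the argument of the cited source. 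The two ingredients you defer, namely $\dim\Gamma=2\dim P$ (equivalently, that the identity section is Lagrangian) and the symplectic orthogonality of the $\alpha$- and $\beta$-fibres, are classical properties of symplectic groupoids going back to Coste, Dazord and Weinstein \cite{Weinstein-symplectic}, so citing them is legitimate and the remaining linear algebra is complete as written.
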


A direct consequence of Theorem \eqref{theoremequivalent} is that given the symplectic groupoid $(T^{*}(\palg),\omega_{\palg})$, if 
$\Sigma_{L_d}\subset T^{*}(\palg)$ is the Lagrangian submanifold
generated by the second-order discrete Lagrangian
$L_d:G_2\rightarrow\R,$
$\tilde{\alpha}\mid_{\Sigma_{L_d}}:\Sigma_{L_d}\to A^{*}(\palg)$ is a local
diffeomorphism if and only if
$\tilde{\beta}\mid_{\Sigma_{L_d}}:\Sigma_{L_d}\to A^{*}(\palg)$ is a local
diffeomorphism. The applications $\tilde{\alpha}\mid_{\Sigma_{L_d}}$
and $\tilde{\beta}\mid_{\Sigma_{L_d}}$ play the role of
$\mathbb{F}L_{d}^{-}$ and $\mathbb{F}L_{d}^{+}$, respectively, in discrete mechanics (see Appendix B and \cite{CoFeMdD} for the higher-order case), that is, $$\mathbb{F}L_d^{+}=\tilde{\beta}\mid_{\Sigma_{L_d}}\hbox{ and } \mathbb{F}L_{d}^{-}=\tilde{\alpha}\mid_{\Sigma_{L_d}},$$ and therefore, from Theorem $\eqref{theoremequivalent}$, $\mathbb{F}L_{d}^{+}$ is a local diffeomorphism if and only if $\mathbb{F}L_{d}^{-}$ is a local
diffeomorphism.

Now, by Definition \eqref{defdynamics}, given a sequence
$\gamma_{(g_{1},g_{2})},\ldots,\gamma_{(g_{N-1},g_{N})}\in
T^{*}(\palg)$, it satisfies the
discrete second-order dynamics on $\Sigma_{L_d}$ if and only if
$\gamma_{(g_{1},g_{2})},\ldots,$ $\gamma_{(g_{N-1},g_{N})}\in \Sigma_{L_d}$
and
$$\mathbb{F}L_d^{+}(\gamma_{(g_{k},g_{k+1})})=\mathbb{F}L_d^{-}(\gamma_{(g_{k+1},g_{k+2})}),\quad k=1,\ldots,N-2.$$

\begin{definition}
Let $L_d:G_2\to\R$ be a discrete second-order Lagrangian. It is said to be \textit{regular} if $\tilde{\alpha}\mid_{\Sigma_{L_d}}$ is a local diffeomorphism, and \textit{hyperregular} if it is a global diffeomorphism. 
\end{definition}


\begin{definition}
If the Lagrangian $L_d:G_2\to\R$ is hyperregular, the map  defined as $\widetilde{\Gamma}_{L_d}=\tilde{\beta}\mid_{\Sigma_{L_d}}\circ(\tilde{\alpha}\mid_{\Sigma_{L_d}})^{-1}:A^{*}(\palg)\to
A^{*}(\palg)$ is called \textit{Hamiltonian evolution operator}. \end{definition}

The next theorem shows the relation between the Hamiltonian evolution operator and the preservation of the Poisson structure on $A^{*}(\palg)$.

\begin{theorem}\label{propositiongroupoids1} Assume that  $\tilde{\alpha}\mid_{\Sigma_{L_d}}$ is a  global diffeomorphisms. Then,  the discrete Hamiltonian evolution
operator $\widetilde{\Gamma}_{L_d}:A^{*}(\palg)\to A^{*}(\palg)$
preserves the Poisson structure on $A^{*}(\palg).$
\end{theorem}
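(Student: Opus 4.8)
The plan is to recognize $\widetilde{\Gamma}_{L_d}$ as the base transformation induced by a Lagrangian bisection of the symplectic groupoid $T^{*}(\palg)$, and then to invoke the general principle that Lagrangian bisections act by Poisson diffeomorphisms. First I would pin down the Poisson structure on the base: since $A^{*}(\palg)$ is the dual of the Lie algebroid $\alpalg$, it carries the canonical fibrewise-linear (Lie--Poisson) structure of the algebroid, and this is precisely the Poisson structure that the symplectic groupoid $T^{*}(\palg)\rightrightarrows A^{*}(\palg)$ induces on its base. With respect to it, the structural maps of a symplectic groupoid obey the standard compatibilities: the source map $\tilde{\alpha}$ is a Poisson morphism, the target map $\tilde{\beta}$ is an anti-Poisson morphism, and $\{\tilde{\alpha}^{*}f,\tilde{\beta}^{*}g\}_{\omega_{\palg}}=0$ for all $f,g\in C^{\infty}(A^{*}(\palg))$.

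Next I would check that $\Sigma_{L_d}$ is a Lagrangian bisection. By construction $\Sigma_{L_d}$ is a Lagrangian submanifold of $(T^{*}(\palg),\omega_{\palg})$, so $\iota^{*}\omega_{\palg}=0$ for the inclusion $\iota:\Sigma_{L_d}\hookrightarrow T^{*}(\palg)$. By hypothesis $\tilde{\alpha}\mid_{\Sigma_{L_d}}$ is a global diffeomorphism onto $A^{*}(\palg)$, and Theorem~\eqref{theoremequivalent} then guarantees that $\tilde{\beta}\mid_{\Sigma_{L_d}}$ is a global diffeomorphism as well. Hence both restricted projections are diffeomorphisms, $\Sigma_{L_d}$ is a Lagrangian bisection, and $\widetilde{\Gamma}_{L_d}=\tilde{\beta}\mid_{\Sigma_{L_d}}\circ(\tilde{\alpha}\mid_{\Sigma_{L_d}})^{-1}$ is exactly the associated base transformation.

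The core of the argument is then to show that this base transformation is Poisson. Writing $a=\tilde{\alpha}\circ\iota$ and $b=\tilde{\beta}\circ\iota$, so that $\widetilde{\Gamma}_{L_d}=b\circ a^{-1}$, I would verify $\widetilde{\Gamma}_{L_d}^{*}\{f,g\}=\{\widetilde{\Gamma}_{L_d}^{*}f,\widetilde{\Gamma}_{L_d}^{*}g\}$ for all $f,g\in C^{\infty}(A^{*}(\palg))$ by transporting both sides to $\Sigma_{L_d}$ and exploiting that $\tilde{\alpha}$ is Poisson, that $\tilde{\beta}$ is anti-Poisson, and that $\{\tilde{\alpha}^{*}f,\tilde{\beta}^{*}g\}_{\omega_{\palg}}=0$, with the Lagrangian condition $\iota^{*}\omega_{\palg}=0$ supplying the identity that glues the source and target data together. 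Conceptually --- and this is the cleanest route to carry out --- translation by the Lagrangian bisection $\Sigma_{L_d}$ is a symplectomorphism of $(T^{*}(\palg),\omega_{\palg})$ covering $\widetilde{\Gamma}_{L_d}$, and the multiplicativity of $\omega_{\palg}$ propagates this symplectic invariance down to Poisson invariance of the induced structure on $A^{*}(\palg)$; this is the second-order adaptation of the first-order argument of Marrero, Mart\'in de Diego and Stern \cite{MMS}.

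The step I expect to be the main obstacle is precisely this last one: extracting a Poisson map on the base out of the Lagrangian bisection. The difficulty is structural rather than computational --- one must combine the Lagrangian condition $\iota^{*}\omega_{\palg}=0$ with the symplectic-groupoid compatibilities in exactly the right way, since neither $\tilde{\alpha}$ nor $\tilde{\beta}$ alone is a Poisson isomorphism, and the cancellation of the mixed source--target terms rests on the multiplicativity of $\omega_{\palg}$. Once the translation/symplectomorphism picture is set up cleanly, the descent to the base Poisson structure on $A^{*}(\palg)$ is automatic.
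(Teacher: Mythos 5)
Your argument is correct, but it follows a different route from the paper's. The paper does not invoke the Lagrangian-bisection/translation machinery at all: it considers the map $(\tilde{\alpha},\tilde{\beta}):T^{*}(\palg)\to \overline{A^{*}(\palg)}\times A^{*}(\palg)$ (first factor with the sign-reversed linear Poisson structure), observes that this is a Poisson map and that the image of the Lagrangian submanifold $\Sigma_{L_d}$ under it is exactly the graph of $\widetilde{\Gamma}_{L_d}$ and is coisotropic, and then concludes via Weinstein's coisotropic calculus (corollary 2.2.3 of \cite{We2}, ``a map whose graph is coisotropic in $\overline{P}\times P$ is Poisson'') that $\widetilde{\Gamma}_{L_d}$ preserves the Poisson structure. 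Your route --- $\Sigma_{L_d}$ is a Lagrangian bisection, translation by it is a symplectomorphism covering $\widetilde{\Gamma}_{L_d}$, and the (anti-)Poisson surjective submersions $\tilde{\alpha},\tilde{\beta}$ push this down to the base --- is the other standard proof of the same fact; it is more explicitly tied to the multiplicativity of $\omega_{\palg}$, whereas the paper's argument is shorter because the coisotropic-graph criterion absorbs all of that into one cited result. Two caveats on your write-up: you leave the key descent step (``the main obstacle'') as a plan rather than carrying it out, so to be complete you should either perform the computation with $\{\tilde{\alpha}^{*}f,\tilde{\beta}^{*}g\}=0$ or cite the bisection theorem from \cite{Weinstein-symplectic}; and Theorem \ref{theoremequivalent} as stated only yields that $\tilde{\beta}\mid_{\Sigma_{L_d}}$ is a \emph{local} diffeomorphism, not a global one (the paper is equally loose here), which is harmless for the conclusion since preservation of a Poisson structure is a local property, but it means ``global bisection'' is slightly more than what the hypotheses literally give you.
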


\begin{proof}

If $\tilde{\alpha}\mid_{\Sigma_{L_d}}$ (or, equivalently,  $\tilde{\beta}\mid_{\Sigma_{L_d}}$) is a global diffeomorphism, the Hamiltonian evolution operatior $\tilde{\Gamma}_{L_d}$ is a global automorphism on $A^{*}(\palg)$. 

Consider the application 
\begin{eqnarray*}
(\tilde{\alpha},\tilde{\beta}):T^{*}(\palg)\longrightarrow&
\overline{A^{*}(\palg)}\times A^{*}(\palg)\\
\mu\longmapsto&(\tilde{\alpha}(\mu),\tilde{\beta}(\mu))
\end{eqnarray*} where
$\overline{A^{*}(\palg)}$ denotes $A^{*}(\palg)$ endowed with the
linear Poisson structure changed of sign. 

The submanifold $\Sigma_{L_d}\subset T^{*}(\palg)$ is the graph of  $\tilde{\Gamma}_{L_d}$ in $\overline{A^{*}(\palg)}\times A^{*}(\palg)$.
Since $(\tilde{\alpha},\tilde{\beta})$ is a Poisson map and $\Sigma_{L_d}$ is a Lagrangian submanifold, the image of $\Sigma_{L_d}$  by  $(\tilde{\alpha},\tilde{\beta})$ is a coisotropic submanifold of $\overline{A^{*}(\palg)}\times A^{*}(\palg)$.  Thus, by corollary (2.2.3) in \cite{We2},  $\tilde{\Gamma}_{L_d}$ is a (local) Poisson automorphism on $A^{*}(\palg)$ and therefore $\widetilde{\Gamma}_{L_d}:A^{*}(\palg)\to A^{*}(\palg)$
preserves the linear Poisson structure on $A^{*}(\palg)$ as we claimed.
\end{proof}

\subsection{Morphism, reduction and Noether symmetries}
In this subsection we study the reduction of discrete
second order Lagrangian systems and Noether symmetries. Consider two Lie groupoids $G\rightrightarrows Q$ and $G'\rightrightarrows Q'$ with
structural maps denoted by $\alpha$, $\beta$, $i$, $\epsilon$, $m$ and $\alpha'$, $\beta'$, $i'$, $\epsilon'$, $m'$ respectively.


\begin{definition}\label{morphism}
A smooth map $\chi: G\to G'$ is a \textit{morphism
of Lie groupoids} if, for every composable pair $(g, h)\in G_2,$ it
satisfies $(\chi(g), \chi(h))\in G'_2$ and $\chi(gh) =
\chi(g)\chi(h)$ where $G'_2$ denotes the set of composable pairs on $G'\rightrightarrows Q'$

\end{definition}

A morphism of Lie groupoids $\chi:G\to G'$ induces a smooth map
$\chi_0:Q\to Q'$ such that
$$\alpha'\circ\chi=\chi_0\circ\alpha,\quad \beta'\circ\chi=\chi_0\circ\beta,\hbox{ and }
\chi\circ\epsilon=\epsilon'\circ\chi_0,$$ that is, the following
diagram is commutative,
\[
\xymatrix{
G\ar[d]_{\chi} \ar[rr]^{\alpha, \beta} & \ & Q \ar[d]^{\chi_0} \\
G'\ar[rr]^{\alpha', \beta'} & \ & Q'
}
\]

A morphism of Lie groupoids $\chi$ induces a morphism $A\chi:AG\to AG'$ of their
corresponding Lie algebroids and
\begin{align}
\overleftarrow{A\chi(v)}(\chi(g))&=T_g\chi(\overleftarrow{v}(g)),\\
\overrightarrow{A\chi(w)}(\chi(g))&=T_g\chi(\overrightarrow{w}(g)),\label{ppp1}
\end{align} for all $g\in G,$ $v\in A_{\beta(g)}G$ and $w\in A_{\alpha(g)}G.$
Moreover, for $X\in\Gamma(AG)$ and ${X}'\in\Gamma(AG')$ we have $A\chi\circ X={X}'\circ\chi_0$ if and only if
$T\chi\circ\overleftarrow{X}=\overleftarrow{{X}'}\circ\chi$
(resp.,
$T\chi\circ\overrightarrow{X}=\overrightarrow{{X}'}\circ\chi$),  (see \cite{Mack} for more details). 
That is, $X$ and ${X}'$ are $``A\chi$-related'' if and only if
their corresponding left-invariant (reps., right invariant) vector
fields are $\chi$-related.

Now consider the prolongations of $G$ and $G'$ by the source map $\alpha$ and $\alpha'$, respectively, denoted by $\palg$ and $\palgt$.

\begin{definition}
Let $\chi:\palg\to\palgt$ be a morphism of Lie groupoids and $x\in\palg$. Two
covectors $\mu\in T^{*}_{x}(\palg)$ and $\mu'\in
T^{*}_{\chi(x)}(\palgt)$ are said to be $\chi^{*}$-related if
$\bra\mu,\xi\ket=\bra\mu',T\chi(\xi)\ket$ for all $\xi\in T_{x}(\palg).$
Also, if $z\in A^{*}_{x}(\palg)$ and $z'\in
A^{*}_{\chi_0(x)}(\palgt),$ are $A^{*}\chi-$related if $\bra
z,\widetilde{\xi}\ket=\bra z',A\chi(\widetilde{\xi})\ket$
for all $\widetilde{\xi}\in A_{x}(\palg)$, where
$\chi_0:\palg\to\palgt$ denotes the smooth map on the base induced
by the morphism $\chi$ and where $A\chi:A(\palg)\to A(\palgt)$ is the
associated Lie algebroid morphism.
\end{definition}


The following theorem states the reduction of
discrete second order Lagrangian systems on Lie groupoids. It follows Theorem $4.5$ in \cite{MMS} for discrete first order systems.

\begin{theorem}\label{morfismosdegroupoidesdelieth}

Consider two Lie groupoids $G\rightrightarrows Q$ and $G'\rightrightarrows Q'$. Let $L_d:G_{2}\to\R$ and $L_d':G_2'\to\R$ be a discrete second order Lagrangians, and let $\chi:\palg\to\palgt$ be a morphism of Lie groupoids satisfying $G_2=\chi^{-1}(G_2')$ and $L_d=L_d'\circ\chi\mid_{G_2}$.
If $\mu\in T^{*}(\palg)$ and $\mu'\in T^{*}(\mathcal{P}^{\alpha'}G')$ are
$\chi^{*}-$related, the following properties hold

\begin{enumerate}
\item If $\mu'\in\Sigma_{L'_d}$ then $\mu\in\Sigma_{L_d}$.
\item The sources $\tilde{\alpha}(\mu)\in A^{*}(\palg)$ and
$\tilde{\alpha}'(\mu')\in A^{*}(\mathcal{P}^{\alpha'}G')$ are
$A^{*}\chi$-related.
\item The targets $\tilde{\beta}(\mu)\in A^{*}(\palg)$ and
$\tilde{\beta'}(\mu')\in A^{*}(\mathcal{P}^{\alpha'}G')$ are
$A^{*}\chi$-related.
\end{enumerate}
\end{theorem}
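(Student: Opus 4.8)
The plan is to prove all three assertions by unwinding the definition of $\chi^{*}$-relatedness together with the intertwining relations between $\chi$ and the left/right-invariant vector fields established earlier in this subsection (the relations $T\chi\circ\overleftarrow{X}=\overleftarrow{X'}\circ\chi$ and $T\chi\circ\overrightarrow{X}=\overrightarrow{X'}\circ\chi$ for $A\chi$-related sections). These are the only structural facts needed, so the proof is essentially a bookkeeping exercise in pairings; there is no deep geometric obstacle, only the need to keep the identifications straight.

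\textit{For part (1)}, I would start from the hypothesis $\mu'\in\Sigma_{L_d'}$, which by the generating construction means $i^{*}_{G_2'}\mu'=dL_d'$. I want to show $i^{*}_{G_2}\mu=dL_d$. Take $v\in T_{(g,g,h)}(\palg)$ tangent to the image of $i_{G_2}$, i.e. $v=Ti_{G_2}(w)$ for $w\in T_{(g,h)}G_2$. Using that $\chi$ restricts to a map $G_2\to G_2'$ (this is where the hypothesis $G_2=\chi^{-1}(G_2')$ enters, guaranteeing $\chi\circ i_{G_2}=i_{G_2'}\circ(\chi|_{G_2})$ up to the natural identifications), I compute
\begin{align*}
\langle i^{*}_{G_2}\mu,w\rangle=\langle\mu,Ti_{G_2}(w)\rangle=\langle\mu',T\chi(Ti_{G_2}(w))\rangle=\langle\mu',Ti_{G_2'}(T(\chi|_{G_2})(w))\rangle,
\end{align*}
where the middle equality is precisely $\chi^{*}$-relatedness. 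This equals $\langle dL_d',T(\chi|_{G_2})(w)\rangle=\langle d(L_d'\circ\chi|_{G_2}),w\rangle=\langle dL_d,w\rangle$, using $\mu'\in\Sigma_{L_d'}$, the chain rule, and the hypothesis $L_d=L_d'\circ\chi|_{G_2}$. Since $w$ was arbitrary, $\mu\in\Sigma_{L_d}$.

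\textit{For parts (2) and (3)}, I would test the desired $A^{*}\chi$-relations against an arbitrary section. Fix $Z\in\Gamma(\tau_{A(\palg)})$ and let $Z'\in\Gamma(\tau_{A(\palgt)})$ be $A\chi$-related to $Z$. For the sources, using the defining formula \eqref{Zsections} for $\tilde{\alpha}$ on both groupoids,
\begin{align*}
\langle\tilde{\alpha}(\mu),Z\rangle=\langle\mu,\overrightarrow{Z}\rangle=\langle\mu',T\chi(\overrightarrow{Z})\rangle=\langle\mu',\overrightarrow{Z'}\rangle=\langle\tilde{\alpha}'(\mu'),Z'\rangle,
\end{align*}
where the second step is $\chi^{*}$-relatedness and the third is the right-invariant intertwining relation \eqref{ppp1}. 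This is exactly the statement that $\tilde{\alpha}(\mu)$ and $\tilde{\alpha}'(\mu')$ are $A^{*}\chi$-related. Part (3) is identical, replacing $\tilde{\alpha}$, $\overrightarrow{Z}$ and formula \eqref{Zsections} by $\tilde{\beta}$, $\overleftarrow{Z}$ and formula \eqref{Zsections2}, and using the left-invariant intertwining relation instead.

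The only point requiring genuine care—and the step I expect to be the mild obstacle—is verifying that $\chi$, being a morphism of the \emph{prolongation} groupoids $\palg\to\palgt$, actually interacts correctly with the inclusions $i_{G_2}$ and with the base map on $G$; that is, checking that the hypotheses $G_2=\chi^{-1}(G_2')$ and $L_d=L_d'\circ\chi|_{G_2}$ suffice to make the compatibility $\chi\circ i_{G_2}=i_{G_2'}\circ(\chi|_{G_2})$ hold. Once this compatibility is in place, everything else is a direct transcription of the pairing identities, so I would present part (1) in full and then remark that (2) and (3) follow by the symmetric computations above.
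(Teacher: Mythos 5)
Your proposal is correct and follows essentially the same route as the paper's proof: the same chain of pairing identities for part (1) using $\chi^{*}$-relatedness, the hypothesis $L_d=L_d'\circ\chi\mid_{G_2}$ and the chain rule, and the same use of the intertwining relation \eqref{ppp1} together with the defining formulas \eqref{Zsections} and \eqref{Zsections2} for parts (2) and (3). Your version is in fact slightly more careful than the paper's in part (1), where you correctly restrict the test vectors to those tangent to the image of $i_{G_2}$ and flag the compatibility $\chi\circ i_{G_2}=i_{G_2'}\circ(\chi\mid_{G_2})$, a point the paper glosses over.
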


\begin{proof}

\begin{itemize}
\item[(1)] Consider $\mu\in
T^{*}_x(\palg)$ and $v\in T_{x}(\palg)$ with $x\in\palg$. Since $\mu$ and
$\mu'$ are $\chi^{*}-$related we have
\begin{align*}
\bra \mu,v\ket&=\bra\mu',T\chi(v)\ket=\bra dL_d',T \chi\mid_{G_{2}}(v)\ket\\
&=\bra\chi^{*}(dL_d'),v\ket=\bra d(L_d'\circ\chi\mid_{G_{2}}),v\ket=\bra
dL_d,v\ket\end{align*} and therefore $\mu\in\Sigma_{L_d}$.

\item[(2)] Consider $v\in
A_{\alpha(x)}(\palg).$ Then
$\bra\tilde{\alpha}(\mu),v\ket=\bra\mu,\overrightarrow{v}\ket=\bra\mu',T\chi(\overrightarrow{v})\ket,$
because are $\chi^{*}-$related. Using \eqref{ppp1} we have 
$$\bra\mu',T\chi(\overrightarrow{v})\ket=\bra\mu',\overrightarrow{A\chi(v)}\ket=\bra\tilde{\alpha}'(\mu'),A\chi(v)\ket.$$
Therefore 
$\bra\tilde{\alpha}(\mu),v\ket=\bra\tilde{\alpha}'(\mu'),A\chi(v)\ket$,
and thus $\tilde{\alpha}(\mu)$ and $\tilde{\alpha}'(\mu')$ are
$A^{*}\chi$-related.

\item[(3)] Consider $v\in T_{x}(\palg)$ with $x\in\palg$ and observe that 
$$\bra\tilde{\beta}(\mu),v\ket=\bra\mu,\overleftarrow{v}\ket=\bra\mu',T\chi(\overleftarrow{v})\ket=\bra\mu',\overleftarrow{A\chi(v)}\ket=\bra\tilde{\beta'}(\mu'),A\chi(v)\ket.$$
Thus, $\tilde{\beta}(\mu)$ and $\tilde{\beta}'(\mu')$ are
$A^{*}\chi-$related.

\end{itemize}

 \end{proof}

\begin{corollary}\label{corolariogroupoids}
Let $\chi:\palg\to\palgt$ be a morphism of Lie groupoids. 

If $\gamma'_{(g'_1,g'_2)},\ldots,\gamma'_{(g'_{N-1},g'_{N})}\in
T^{*}(\palgt)$ satisfy the discrete second-order dynamics for the discrete system determined by
$L_d':G_2'\to\R$ then any sequence $\chi^{*}-$related
$\gamma_{(g_{1},g_{2})},\ldots,\gamma_{(g_{N-1},g_{N})}\in
T^{*}(\palg)$ satisfy the discrete second-order dynamics for the discrete system determined by $L_d:G_2\to\R$. \end{corollary}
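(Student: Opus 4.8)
The plan is to recognize that this corollary is an immediate consequence of Theorem \ref{morfismosdegroupoidesdelieth}, applied termwise along the two sequences. Recall from Definition \ref{defdynamics} that the unprimed sequence $\gamma_{(g_1,g_2)},\ldots,\gamma_{(g_{N-1},g_N)}$ satisfies the discrete second-order dynamics on $\Sigma_{L_d}$ precisely when (i) each $\gamma_{(g_k,g_{k+1})}$ lies in $\Sigma_{L_d}$ and (ii) the composability relations $\tilde{\alpha}(\gamma_{(g_k,g_{k+1})})=\tilde{\beta}(\gamma_{(g_{k-1},g_k)})$ hold in $A^{*}(\palg)$ for $k=2,\ldots,N-1$. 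So I would verify (i) and (ii) separately, feeding in the corresponding facts about the primed sequence, which by hypothesis already satisfies both conditions on $\Sigma_{L'_d}$.

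For condition (i), I would apply part (1) of Theorem \ref{morfismosdegroupoidesdelieth} directly: since each primed covector $\gamma'_{(g'_k,g'_{k+1})}$ lies in $\Sigma_{L'_d}$ and is $\chi^{*}$-related to $\gamma_{(g_k,g_{k+1})}$, the theorem yields $\gamma_{(g_k,g_{k+1})}\in\Sigma_{L_d}$. For condition (ii), I would use parts (2) and (3): part (2) says $\tilde{\alpha}(\gamma_{(g_k,g_{k+1})})$ is $A^{*}\chi$-related to $\tilde{\alpha}'(\gamma'_{(g'_k,g'_{k+1})})$, while part (3) says $\tilde{\beta}(\gamma_{(g_{k-1},g_k)})$ is $A^{*}\chi$-related to $\tilde{\beta}'(\gamma'_{(g'_{k-1},g'_k)})$. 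Because the primed sequence is composable, $\tilde{\alpha}'(\gamma'_{(g'_k,g'_{k+1})})=\tilde{\beta}'(\gamma'_{(g'_{k-1},g'_k)})$; call this common element $z'$. Then both $\tilde{\alpha}(\gamma_{(g_k,g_{k+1})})$ and $\tilde{\beta}(\gamma_{(g_{k-1},g_k)})$ are $A^{*}\chi$-related to the single covector $z'$.

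The step I expect to carry the real content is concluding from this that $\tilde{\alpha}(\gamma_{(g_k,g_{k+1})})=\tilde{\beta}(\gamma_{(g_{k-1},g_k)})$. Here I would first check that both elements live in the same fiber of $A^{*}(\palg)$: using the source and target maps $\alpha^{\alpha}(g,h,r)=g$ and $\beta^{\alpha}(g,h,r)=r$ of $\palg$ together with the inclusion $i_{G_2}(g_k,g_{k+1})=(g_k,g_k,g_{k+1})$, both covectors sit over the base point $g_k\in G$, and the compatibility $\alpha^{\alpha'}\circ\chi=\chi_0\circ\alpha^{\alpha}$ ensures $z'$ sits over $\chi_0(g_k)=g'_k$. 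I would then invoke the defining property of $A^{*}\chi$-relatedness: a covector $z\in A^{*}_{g_k}(\palg)$ being $A^{*}\chi$-related to $z'$ means $\bra z,\widetilde{\xi}\ket=\bra z',A\chi(\widetilde{\xi})\ket$ for every $\widetilde{\xi}\in A_{g_k}(\palg)$, which determines $z$ uniquely as the pullback $(A\chi)^{*}z'$. Since both $\tilde{\alpha}(\gamma_{(g_k,g_{k+1})})$ and $\tilde{\beta}(\gamma_{(g_{k-1},g_k)})$ satisfy this same equation against $z'$, they must coincide, giving condition (ii). Running this for each $k=2,\ldots,N-1$ completes the argument. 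The only delicate point is the bookkeeping of fibers and base points, so that the two $A^{*}\chi$-related covectors are genuinely comparable before uniqueness of the pullback is invoked.
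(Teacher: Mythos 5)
Your proposal is correct and follows essentially the same route as the paper: part (1) of Theorem \ref{morfismosdegroupoidesdelieth} gives membership in $\Sigma_{L_d}$, and parts (2)--(3) combined with the composability of the primed sequence force the composability of the unprimed one. The paper phrases the last step as a direct chain of equalities of pairings against an arbitrary test section, which is exactly your observation that two covectors in the same fiber of $A^{*}(\palg)$ that are both $A^{*}\chi$-related to the same $z'$ must coincide.
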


\begin{proof} By Theorem \ref{morfismosdegroupoidesdelieth}, if $\mu'_k\in\Sigma_{L_{d}'}$ then
$\mu_k\in\Sigma_{L_d}$ for $k=1,\ldots,N$. Moreover, for all $v\in
A_{\beta(g_k,g_k,g_{k+1})}(\palg)=A_{\alpha(g_{k+1},g_{k+1},g_{k+2})}(\palg),$
using the fact  that $\mu$ and $\mu'$ are $\chi^{*}-$related, we have, for $\xi\in T^{*}(\palg)$,
\begin{align*}
\bra\tilde{\beta}(\gamma_{(g_{k},g_{k+1})}),\xi\ket&=\bra\tilde{\beta'}(\gamma_{(g'_{k},g'_{k+1})}'),A\chi(\xi)\ket\\
&=\bra\tilde{\alpha'}(\gamma_{(g'_{k+1},g'_{k+2})}'),A\chi(\xi)\ket\\
&=\bra\tilde{\alpha}(\gamma_{(g_{k+1}, g_{k+2})}),\xi\ket
\end{align*} and therefore $\tilde{\beta}(\gamma_{(g_{k},g_{k+1})})=\tilde{\alpha}(\gamma_{(g_{k+1},g_{k+2})})$ for $k=1,\ldots,N-2.$ That is, $\gamma_{(g_1,g_2)},$ $\ldots,$ $\gamma_{(g_{N-1},g_{N})}$ satisfy the discrete second-order dynamics for the discrete Lagrangian $L_d:G_2\to\R$.
\end{proof}

Finally, we introduce  the notion of Noether symmetry and
 constants of motion for discrete second-order Lagrangian systems and we prove that for all Noether symmetry of the discrete second-order Lagrangian system determined by $L_d:G_2\to\R$ there is a
corresponding constant of motion which is preserved by the discrete second-order dynamics. This is a natural extension of discrete Noether symmetry  for first order systems introduced in \cite{MMM} and \cite{MMS}.

\begin{definition}
A section $Z\in\Gamma(A(\palg))$ is said to be a Noether symmetry of
the discrete second-order Lagrangian system determined by $L_d:G_2\to\R$ if there exists
a function $f\in C^{\infty}(G)$ such that
$$\bra\tilde{\alpha}(\mu),Z(\alpha^{\alpha}(\xi))\ket+f(\alpha^{\alpha}(\xi))=\bra\tilde{\beta}(\mu),Z(\beta^{\alpha}(\xi))\ket+f(\beta^{\alpha}(\xi))
$$ for all $\mu\in\Sigma_L$, with
$\xi=\pi_{\palg}(\mu)$, where $\pi_{\palg}:T^{*}(\palg)\to\palg$
is the cotangent bundle projection and $\alpha^{\alpha},\beta^{\alpha}:\palg\to G$ are the source and target maps, respectively, of the Lie groupoid $\palg$ over $G$.
\end{definition}

When $f=0$, we say $L_d$ is \textit{invariant} with respect to $Z$, and the conserved quantity is $$F_{Z}(\mu)=\bra\mathbb{F}L_{d}^{\pm}(\mu),Z\ket,$$ and when $f\neq 0$, we say $L_d$ is \textit{quasi-invariant} with respect to $Z$.

\begin{theorem}\label{teoremanoethergroupoids}
If $Z\in\Gamma(A(\palg))$ is a Noether symmetry of a discrete
second-order Lagrangian system determined by the discrete second-order Lagrangian $L_d:G_2\to\R$, the function
$F_Z:\Sigma_L\to\R$ given by
$$F_Z(\mu)=\bra\tilde{\alpha}(\mu),Z(\alpha^{\alpha}(\xi))\ket+f(\alpha^{\alpha}(\xi))=\bra\tilde{\beta}(\mu),Z(\beta^{\alpha}(\xi))\ket+f(\beta^{\alpha}(\xi)),$$
is a constant of motion where $\xi=\pi_{\palg}(\mu)$. That is,
if $\gamma_{(g_1,g_2)},\ldots,\gamma_{(g_{N-1},g_N)}\in
T^{*}(\palg)$ satisfy the discrete second-order dynamics then,
$$F_Z(\gamma_{(g_k,g_{k+1})})=F_Z(\gamma_{(g_{k+1},g_{k+2})})$$ for
$k=1,\ldots,N-2.$
\end{theorem}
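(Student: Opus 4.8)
The plan is to mimic the telescoping argument familiar from discrete Noether theorems, exploiting the two equivalent expressions for $F_Z$ that the symmetry hypothesis guarantees. First I would pin down the footpoints. For a sequence satisfying the discrete second-order dynamics, each $\gamma_{(g_k,g_{k+1})}$ projects under $\pi_{\palg}$ to $\xi_k=i_{G_2}(g_k,g_{k+1})=(g_k,g_k,g_{k+1})\in\palg$. Since the source and target maps of the prolongation are $\alpha^{\alpha}(g,h,r)=g$ and $\beta^{\alpha}(g,h,r)=r$, one has $\alpha^{\alpha}(\xi_k)=g_k$ and $\beta^{\alpha}(\xi_k)=g_{k+1}$. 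The decisive observation is that consecutive footpoints overlap: $\beta^{\alpha}(\xi_k)=g_{k+1}=\alpha^{\alpha}(\xi_{k+1})$, so both $Z$ and $f$ get evaluated at the \emph{same} point $g_{k+1}$ when we compare $\gamma_{(g_k,g_{k+1})}$ with $\gamma_{(g_{k+1},g_{k+2})}$.

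Next I would write the conserved quantity at the two consecutive elements using the two different halves of its definition. At $\gamma_{(g_k,g_{k+1})}$ I use the $\tilde{\beta}$-expression,
$$F_Z(\gamma_{(g_k,g_{k+1})})=\bra\tilde{\beta}(\gamma_{(g_k,g_{k+1})}),Z(g_{k+1})\ket+f(g_{k+1}),$$
while at $\gamma_{(g_{k+1},g_{k+2})}$ I use the $\tilde{\alpha}$-expression,
$$F_Z(\gamma_{(g_{k+1},g_{k+2})})=\bra\tilde{\alpha}(\gamma_{(g_{k+1},g_{k+2})}),Z(g_{k+1})\ket+f(g_{k+1}).$$
By the overlap of footpoints established above, both expressions carry the identical terms $Z(g_{k+1})$ and $f(g_{k+1})$. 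I would then invoke the defining relation of the discrete second-order dynamics (Definition~\ref{defdynamics}), namely $\tilde{\alpha}(\gamma_{(g_{k+1},g_{k+2})})=\tilde{\beta}(\gamma_{(g_k,g_{k+1})})$, which substituted into the two displays immediately yields $F_Z(\gamma_{(g_k,g_{k+1})})=F_Z(\gamma_{(g_{k+1},g_{k+2})})$ for $k=1,\ldots,N-2$.

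The only genuine subtlety — and hence the point where I would spend most care — is that $F_Z$ must be well defined, i.e.\ the $\tilde{\alpha}$- and $\tilde{\beta}$-expressions must actually agree on $\Sigma_L$; but this is precisely the Noether symmetry hypothesis, which asserts the equality of exactly these two quantities for every $\mu\in\Sigma_L$ with $\xi=\pi_{\palg}(\mu)$. Everything else is bookkeeping of the source and target maps on the prolongation groupoid, and no further regularity or structural input is required, so the argument goes through verbatim in both the invariant ($f=0$) and quasi-invariant ($f\neq0$) cases.
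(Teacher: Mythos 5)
Your proposal is correct and follows essentially the same route as the paper's own proof: evaluate $F_Z(\gamma_{(g_k,g_{k+1})})$ via the $\tilde{\beta}$-expression at $\beta^{\alpha}(g_k,g_k,g_{k+1})=g_{k+1}$, substitute the dynamics relation $\tilde{\beta}(\gamma_{(g_k,g_{k+1})})=\tilde{\alpha}(\gamma_{(g_{k+1},g_{k+2})})$, and recognize the result as the $\tilde{\alpha}$-expression of $F_Z(\gamma_{(g_{k+1},g_{k+2})})$ since $\alpha^{\alpha}(g_{k+1},g_{k+1},g_{k+2})=g_{k+1}$. Your explicit remark that well-definedness of $F_Z$ is exactly the Noether symmetry hypothesis is a point the paper leaves implicit, but the argument is the same.
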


\begin{proof} If $\gamma_{(g_1,g_2)},\ldots,\gamma_{(g_{N-1},g_N)}$
satisfy the discrete second-order dynamics, then
$\tilde{\beta}(\gamma_{(g_k,g_{k+1})})=\tilde{\alpha}(\gamma_{(g_{k+1},g_{k+2})})$
where $\gamma_{(g_k,g_{k+1})}\in\Sigma_L$ for $k=1,\ldots,N-1.$
Therefore 

\begin{align*}
F_Z(\gamma_{(g_k,g_{k+1})})=&\bra\tilde{\beta}(\gamma_{(g_k,g_{k+1})}),Z(\beta^{\alpha}(g_k,g_k,g_{k+1}))\ket+f(\beta^{\alpha}(g_k,g_k,g_{k+1}))\\
=&\bra\tilde{\beta}(\gamma_{(g_k,g_{k+1})}),Z(g_{k+1})\ket+f(g_{k+1})\\
=&\bra\tilde{\alpha}(\gamma_{(g_{k+1},g_{k+2})}),Z(\alpha^{\alpha}(g_{k+1},g_{k+1},g_{k+2}))\ket\\ &+f(\alpha^{\alpha}(g_{k+1},g_{k+1},g_{k+2}))\\
=&F_Z(\gamma_{(g_{k+1},g_{k+2})})
\end{align*} for $k=1,\ldots,N-2.$
\end{proof}

\subsection{Discrete second-order constrained
mechanics}\label{discreteconstrained}

Consider a discrete second-order constrained systems, that is, given a Lie groupoid $G\rightrightarrows Q$ one consider the discrete second-order Lagrangian $L^N_d: N\to\R$ defined on a  submanifold $N$ of the set of composable elements $G_2$,. The submanifold $N$ implies  that the dynamics is restricted. 

From Theorem \ref{tulczyjew}
$$\Sigma_{L_{d}^N}=\{\mu\in T^{*}(\palg)\mid i_{N}^{*}\mu=dL_{d}^{N}\}$$ is a Lagrangian submanifold where $i_N:N\hookrightarrow \palg$ is the inclusion from $N$ to $\palg$.  

The Lagrangian submanifold, $\Sigma_{L_{d}^N}$ is  an affine bundle over $N$ taking the
projection $\pi_{\palg}\mid_{\Sigma_{L_{d}^N}}:\Sigma_{L_{d}^N}\to N$,
the restriction of the cotangent bundle projection $\pi_{\palg}:T^{*}(\palg)\to\palg$ to this Lagrangian submanifold.



Suppose that the constraint submanifold $N\subset G_{2}$ is given by
$$N=\{(g_{1},g_{2})\in G_{2}\mid \Phi^{A}(g_1,g_2)=0,\hbox{ with }A\in
I\},$$ where $\{\Phi^{A}\}_{A\in I}$ is a family of real functions
defined in a neighborhood of $N$ and $I$ is an index set. Then, an element $\mu\in\Sigma_{L_{d}^N}$ with
$(g_1,g_2)=\pi_{\palg}\mid_{\Sigma_{L_d}^{N}}(\mu),$ can be written as
\[ \mu=dL_d(g_1,g_2)+\lambda_{A}
d\Phi^{A}(g_1,g_2)=d(L_d+\lambda_{A}\Phi^{A})(g_1,g_2)\in\Sigma_{L_{d}^N}.
\] 
where $L_{d}: G_2\to\R$ is an arbitrary extension of  $L_{d}^{N}: N\to \R$.

In this sense, $\Sigma_{L^N_{d}}$ can be locally seen as the space consisting of the elements $(g_{1},g_2)\in N$ together the Lagrange multipliers $\lambda_{A}$ constraining $(g_{1},g_{2})$  to $N$.

Therefore, by Theorem \ref{teoremagroupoides1}, the
sequence  $(g_1,g_2,\ldots,g_N,\lambda^{1},\lambda^2,\ldots,\lambda^{N-1})$, is a solution of the
discrete second-order constrained Lagrangian system determined by $\widehat{L}_{d}=L_{d}+\lambda_A\Phi^{A}_d$ with
$(g_j,g_{j+1})\in N$ for $j=1,\ldots,N-1$ if it satisfies
\begin{align*}
0=&\Big{\langle}\overrightarrow{X}(g_{k+1}),D_1L_{d}(g_{k+1},g_{k+2})+\lambda^{k+1}_{A}D_{1}\Phi^{A}_d(g_{k+1},g_{k+2})+D_{2}L_{d}(g_{k},g_{k+1})\\
&+\lambda^{k}_{A}D_{2}\Phi^{A}_d(g_{k},g_{k+1})\Big{\rangle}-\Big{\langle}\overleftarrow{X}(g_{k}),D_1L_{d}(g_{k},g_{k+1})+\lambda^{k}_{A}D_{1}\Phi^{A}_d(g_{k},g_{k+1})\\
&+D_{2}L_{d}(g_{k-1},g_{k})+\lambda^{k-1}_{A}D_{2}\Phi^{A}_d(g_{k-1},g_{k})\Big{\rangle}, 
\end{align*}for $k=2,\ldots,N-2$ and $X$ a vector field on $G$.

Therefore, the sequence  $(g_1,g_2,\ldots,g_N,\lambda^{1},\lambda^2,\ldots,\lambda^{N-1})$ 
satisfies \begin{align*}
0=&\Phi^{A}_d(g_{k},g_{k+1}),\hbox{ for all } A\in I,\quad k=1,\ldots, N-1;\\
0=&\ell_{g_{k}}^{*}\Big(D_1L_{d}(g_{k},g_{k+1})+\lambda^{k}_{A}D_{1}\Phi^{A}_d(g_{k},g_{k+1})+D_{2}L_{d}(g_{k-1},g_{k})\\
&+\lambda^{k-1}_{A}D_{2}\Phi^{A}_d(g_{k-1},g_{k})\Big)
+(r_{g_{k+1}}\circ i)^{*}\Big(D_1L_{d}(g_{k+1},g_{k+2})\\&+\lambda^{k+1}_{A}D_{1}\Phi^{A}_d(g_{k+1},g_{k+2})+D_{2}L_{d}(g_{k},g_{k+1})+\lambda^{k}_{A}D_{2}\Phi^{A}_d(g_{k},g_{k+1})\Big),
\end{align*} for $k=2,\ldots,N-2$.

\begin{remark}
When the Lie groupoid is a Lie group, we obtain the
second-order Euler-Poincar\'e equations for systems with constraints
(see \cite{CojiMdD} for example)
\begin{align*}
0=&\Phi^{A}_d(g_{k},g_{k+1}),\quad 0=\Phi^{A}_d(g_{k-1},g_{k}),\quad 0=\Phi^{A}_d(g_{k+1},g_{k+2})\hbox{ for all }A\in I;\\
0=&\ell_{g_{k}}^{*}\Big(D_1L_{d}(g_{k},g_{k+1})+\lambda^{k}_{A}D_{1}\Phi^{A}_d(g_{k},g_{k+1})+D_{2}L_{d}(g_{k-1},g_{k})\\
&+\lambda^{k-1}_{A}D_{2}\Phi^{A}_d(g_{k-1},g_{k})\Big)
-r_{g_{k+1}}^{*}\Big(D_1L_{d}(g_{k+1},g_{k+2})+\lambda^{k+1}_{A}D_{1}\Phi^{A}_d(g_{k+1},g_{k+2})\\
&+D_{2}L_{d}(g_{k},g_{k+1})+\lambda^{k}_{A}D_{2}\Phi^{A}_d(g_{k},g_{k+1})\Big),
\end{align*} for  $k=2,\ldots,N-2$.

When the Lie groupoid is the Banal groupoid, we have
\begin{align*}
0=&\Phi^{A}_d(q_{k},q_{k+1},q_{k+2}),\quad 0=\Phi^{A}_d(q_{k-1}, q_{k},q_{k+1}),\quad 0=\Phi^{A}_d(q_{k-2},q_{k-1},q_{k})\\
0=&D_1L_{d}(q_{k},q_{k+1},q_{k+2})+\lambda^{k}_{A}D_{1}\Phi^{A}_d(q_{k},q_{k+1},q_{k+2})+D_{2}L_{d}(q_{k-1},q_{k},q_{k+1})\\
&+\lambda^{k-1}_{A}D_{2}\Phi^{A}_d(q_{k-1},q_{k},q_{k+1})+D_3L_{d}(q_{k-2},q_{k-1},q_k)\\
&+\lambda^{k-2}_{A}D_{3}\Phi^{A}_d(q_{k-2},q_{k-1},q_k),
\end{align*} for all $A\in I$ and for $k=2,\ldots,N-2$. The equations given above are the discrete second-order Euler-Lagrange
equations for systems with second-order constraints (see
\cite{ldm} for example).\hfill$\diamond$

\end{remark}

\section{Conclusions and future research}

In this paper, we  have developed  a generalized theory
of discrete second-order Lagrangian mechanics from a variational point of view and we have  shown how to apply this theory to the construction of
variational integrators for some interesting examples of optimal control problems of mechanical
systems. After that, we have  shown how  Lagrangian submanifolds of a
symplectic groupoid (cotangent groupoid) give rise an intrinsic way  to discrete
dynamical second-order systems, and we have studied the geometric properties of these
systems from the
perspective of symplectic and Poisson geometry. Finally,  we have developed the reduction by Noether symmetries, and we have studied the relationship between the dynamics and variational principles for these second-order
variational problems. 

In \cite{BlCoGuMdD} we have studied optimal control problems for nonholonomic mechanical systems as second-order constrained variational problems. Let $\mathcal{D}$ be a non-integrable distribution defined by the nonholonomic constraints of some mechanical systems.  
We define the submanifold $\mathcal{D}^{(2)}$ of $T\mathcal{D}$ by
\begin{equation}\label{D2}
\mathcal{D}^{(2)}:=\{v\in T\mathcal{D}\mid v=\dot{\gamma}(0)\hbox{
where } \gamma:I\rightarrow\mathcal{D} \hbox{ is admissible}\},
\end{equation} and where we choose coordinates $(x^{i},y^{\alpha},\dot{y}^{\alpha})$ on
$\mathcal{D}^{(2)}$, and where the inclusion on $T\mathcal{D}$,
$i_{\mathcal{D}^{(2)}}:\mathcal{D}^{(2)}\hookrightarrow
T\mathcal{D}$ is given by
$$i_{\mathcal{D}^{(2)}}(q^{i},y^{\alpha},\dot{y}^{\alpha})=(q^{i},y^{\alpha},\rho_{\alpha}^{i}(q)y^{\alpha},\dot{y}^{\alpha}).$$ Therefore, $\mathcal{D}^{(2)}$ is locally described by the constraints on $T\mathcal{D}$ $$\dot{q}^{i}-\rho_{\alpha}^{i}y^{\alpha}=0.$$

The optimal control problem is determined by a smooth function
$\widetilde{L}:\mathcal{D}^{(2)}\rightarrow\mathbb{R}$ given a cost functional as in Section 3. To derive the equations of motion for $\widetilde{L}$ one can use
standard variational calculus for systems with constraints defining
the extended Lagrangian $\mathcal{L}$,
$$\mathcal{L}=\widetilde{L}+\lambda_{i}(\dot{q}^{i}-\rho_{\alpha}^{i}y^{\alpha}).$$

In a future work we would like to build variational integrators as an
alternative way to construct integration schemes for the type  of
optimal control problems studied in \cite{BlCoGuMdD}. Since the space
$\mathcal{D}^{(2)}$ is a subset of $T\mathcal{D}$ we can discretize
the tangent bundle $T\mathcal{D}$ by the cartesian product
$\mathcal{D}\times\mathcal{D}$. Therefore, our discrete variational
approach for optimal control problems of nonholonomic mechanical
systems will be determined by the construction of a discrete
Lagrangian $\widetilde{L}_{d}:\mathcal{D}_{d}^{(2)}\to\R$ where
$\mathcal{D}^{(2)}_{d}$ is the subset of
$\mathcal{D}\times\mathcal{D}$ locally determined by imposing the
discretization of the constraint $\dot{q}^{i}=\rho_{\alpha}^{i}(q)y^{\alpha}$. 
For instance we can consider
$$\mathcal{D}_{d}^{(2)}=\left\{(q_0^{i},y_{0}^{\alpha},q_{1}^{i},y^{\alpha}_{1})\in
\mathcal{D}\times\mathcal{D}\ \bigg{|}\ \frac{q_{1}^{i}-q_0^{i}}{h}=\rho_{\alpha}^{i}\left(\frac{q_0^{i}+q_1^{i}}{2}\right)\left(\frac{y_{0}^{\alpha}+y_{1}^{\alpha}}{2}\right)\right\}.$$

Now the system is in a form appropriate for the application of discrete
variational methods for constrained systems developed in this work from both, variational and geometrical points of view as in sections \ref{constrainedcase} and \ref{discreteconstrained}.

\section*{Appendix A: Higher-order tangent bundles}
In this Appendix we recall some basic facts of the geometry of
tangent bundle theory.  For more details see \cite{CSC,LR1}.

Let $Q$ be a  differentiable manifold of dimension $n$. It is possible to introduce an equivalence relation
 in the set $C^{k}(\R, Q)$ of $k$-differentiable
curves from $\R$ to $Q$. By definition,  two given curves in $Q$,
$\gamma_1(t)$ and $\gamma_2(t)$,
where $t\in (-a, a)$ with $a\in \R$
have contact of order  $k$ at $q_0 = \gamma_1(0) = \gamma_2(0)$ if
there is a local chart $(\varphi, U)$ of $Q$ such that $q_0 \in U$
and
$$\frac{d^s}{dt^s}\left(\varphi \circ \gamma_1(t)\right){\big{|}}_{t=0} =
\frac{d^s}{dt^s} \left(\varphi
\circ\gamma_2(t)\right){\Big{|}}_{t=0}\; ,$$ for all $s = 0,...,k.$ This
is a well defined equivalence relation
in $C^{k}(\R,Q)$
and the equivalence class of a  curve $\gamma$ will be denoted by
$[\gamma ]_0^{(k)}.$ The set of equivalence classes will be
denoted by $T^{(k)}Q$
and it is not hard to show that it has a natural structure of differentiable manifold. Moreover, $ \tau_Q^k  : T^{(k)} Q
\rightarrow Q$ where $\tau_Q^k \left([\gamma]_0^{(k)}\right) =
\gamma(0)$ is a fiber bundle called the \emph{tangent bundle of
order $k$} of $Q.$

From a local chart $q^{(0)}=(q^i)$ on a neighborhood $U$ of $Q$ with
$i=1,\ldots,n$, it is possible to induce local coordinates
 $(q^{(0)},q^{(1)},\dots,q^{(k)})$ on
$T^{(k)}U=(\tau_Q^k)^{-1}(U)$. The standard
convention is, $q^{(0)}\equiv q^{i}$, $q^{(1)}\equiv \dot{q}^i$ and
$q^{(2)}\equiv \ddot{q}^i$.




\section*{Appendix B: Discrete Mechanics}

This appendix briefly reviews some key results of discrete mechanics
(see Marsden and West \cite{MaWest} for more details).
\subsection*{A.1. Discrete Lagrangian Mechanics}

A \textit{discrete Lagrangian} is a differentiable function
$\mathbb{L}_d\colon Q \times Q\to \R$, which may be considered as an
approximation of the action integral defined by a continuous regular 
Lagrangian $L\colon TQ\to \R.$ That is, given a time step $h>0$
small enough,
\[
\mathbb{L}_d(q_0, q_1)\approx \int^h_0 L(q(t), \dot{q}(t))\; dt,
\]
where $q(t)$ is the unique solution of the Euler-Lagrange equations
for $L$ with  boundary conditions $q(0)=q_0$ and $q(h)=q_1$.

We construct the grid $\{t_{k}=kh\mid k=0,\ldots,N\},$ with $Nh=T$
and define the discrete path space
$\mathcal{P}_{d}(Q):=\{q_{d}:\{t_{k}\}_{k=0}^{N}\to Q\}.$ We
identify a discrete trajectory $q_{d}\in\mathcal{P}_{d}(Q)$ with its
image $q_{d}=\{q_{k}\}_{k=0}^{N}$ where $q_{k}:=q_{d}(t_{k}).$ The
discrete action $\mathcal{A}_{d}:\mathcal{P}_{d}(Q)\to\R$ along this
sequence is calculated by summing the discrete Lagrangian on each
adjacent pair and defined by \begin{equation}\label{acciondiscreta}
\mathcal{A}_d(q_{d}) = \mathcal{A}_d(q_0,...,q_N) :=\sum_{k=0}^{N-1}\mathbb{L}_d(q_k,q_{k+1}).
\end{equation}
We would like to point out that the discrete path space is
isomorphic to the smooth product manifold which consists on $N+1$
copies of $Q,$ the discrete action inherits the smoothness of the
discrete Lagrangian and the tangent space
$T_{q_{d}}\mathcal{P}_{d}(Q)$ at $q_{d}$ is the set of maps
$v_{q_{d}}:\{t_{k}\}_{k=0}^{N}\to TQ$ such that $\tau_{Q}\circ
v_{q_{d}}=q_{d}$ which will be denoted by
$v_{q_{d}}=\{(q_{k},v_{k})\}_{k=0}^{N},$ where $\tau_{Q} : TQ
\rightarrow Q$ is the canonical projection.

For any product manifold $Q_1\times Q_2,$
$T^{*}_{(q_1,q_2)}(Q_1\times Q_2)\simeq T^{*}_{q_1}Q_1\times
T^{*}_{q_2}Q_2,$ for $q_1\in Q_1$ and $q_2\in Q_2$ where $T^{*}Q$
denotes the cotangent bundle of a differentiable manifold $Q.$
Therefore, any covector $\alpha\in T^{*}_{(q_1,q_2)}(Q_1\times Q_2)$
admits an unique decomposition $\alpha=\alpha_1+\alpha_2$ where
$\alpha_i\in T^{*}_{q_i}Q_i,$ for $i=1,2.$ Thus, given a discrete
Lagrangian $\mathbb{L}_d$ we have the following decomposition
  $$\hbox{d}\mathbb{L}_{d}(q_0,q_1)=D_{1}\mathbb{L}_d(q_0,q_1)+D_{2}\mathbb{L}_d(q_0,q_1),$$
where $D_{1}\mathbb{L}_d(q_0,q_1)\in T^*_{q_0}Q$ and $D_{2}\mathbb{L}_d(q_0,q_1)\in
T^*_{q_1}Q$.

The discrete variational principle, or Cadzow's principle \cite{Cad}, states that
the solutions of the discrete system determined by $\mathbb{L}_d$ must
extremize the action sum given fixed points $q_0$ and $q_N.$
Extremizing $\mathcal{A}_d$ over $q_k$ with $1\leq k\leq N-1,$ we
obtain the following system of difference equations
%
\begin{equation}\label{discreteeq}
 D_1\mathbb{L}_d( q_k, q_{k+1})+D_2\mathbb{L}_d( q_{k-1}, q_{k})=0.
\end{equation}
These equations are usually called \textit{discrete Euler-Lagrange
equations}. Given a solution $\{q_{k}^{*}\}_{k\in\mathbb{Z}}$ of
eq.\eqref{discreteeq} and assuming the regularity hypothesis (the
matrix $(D_{12}\mathbb{L}_d(q_k, q_{k+1}))$ is regular), it is possible to
define implicitly a (local) discrete flow $
\Upsilon_{\mathbb{L}_d}\colon\mathcal{U}_{k}\subset Q\times Q\to Q\times Q$,
by $\Upsilon_{\mathbb{L}_d}(q_{k-1}, q_k)=(q_k, q_{k+1})$ from
(\ref{discreteeq}) where $\mathcal{U}_{k}$ is a neighborhood of the
point $(q_{k-1}^{*},q_{k}^{*})$.




Let us define the discrete Lagrangian 1-forms $\Theta_{\mathbb{L}_{\rm
d}}^{\pm}: Q \times Q \to T^{*}(Q \times Q)$ by
\begin{subequations}
  \label{eq:DLSOF}
  \begin{align}
    \Theta_{\mathbb{L}_{\rm d}}^{+}&: (q_{k}, q_{k+1}) \longmapsto D_{2}\mathbb{L}_{\rm d}(q_{k}, q_{k+1})\,dq_{k+1},
    \\
    \Theta_{\mathbb{L}_{\rm d}}^{-}&: (q_{k}, q_{k+1}) \longmapsto -D_{1}\mathbb{L}_{\rm d}(q_{k}, q_{k+1})\,dq_{k}.
  \end{align}
\end{subequations}
Then, the discrete flow $\Upsilon_{\mathbb{L}_{\rm d}}$ preserves the discrete
Lagrangian form
\begin{equation}
  \label{eq:DLSTF}
  \Omega_{\mathbb{L}_{\rm d}}(q_{k}, q_{k+1}) =-d\Theta_{\mathbb{L}_{\rm d}}^{+} =-d\Theta_{\mathbb{L}_{\rm d}}^{-} =D_{1}D_{2}\mathbb{L}_{\rm d}(q_{k}, q_{k+1})\,dq_{k} \wedge dq_{k+1}.
\end{equation}
Specifically, we have
\begin{equation*}
  (\Upsilon_{\mathbb{L}_{\rm d}})^{*} \Omega_{\mathbb{L}_{\rm d}} = \Omega_{\mathbb{L}_{\rm d}}.
\end{equation*}

\subsection*{B.2. Discrete Hamiltonian Mechanics}

Introduce the {\em right and left discrete Legendre transformations}
$\mathbb{F}\mathbb{L}_{\rm d}^{\pm}: Q \times Q \to T^{*}Q$ by
\begin{subequations}
  \label{eq:DLT}
  \begin{align}
    \label{eq:DLT+}
    \mathbb{F}\mathbb{L}_{\rm d}^{+}&: (q_{k}, q_{k+1}) \longmapsto (q_{k+1}, D_{2}\mathbb{L}_{\rm d}(q_{k}, q_{k+1}) ),
    \\
    \label{eq:DLT-}
    \mathbb{F}\mathbb{L}_{\rm d}^{-}&: (q_{k}, q_{k+1}) \longmapsto (q_{k}, -D_{1}\mathbb{L}_{\rm d}(q_{k}, q_{k+1}) ),
  \end{align}
\end{subequations}
respectively. Then we find that the Eq.~\eqref{eq:DLSOF} and
\eqref{eq:DLSTF} are pull-backs by these maps of the Liouville
1-form $\lambda_Q$ and the canonical symplectic 2-form $\omega_Q$, on $T^{*}Q$, respectively, as follows:
\begin{equation*}
  \Theta_{\mathbb{L}_{\rm d}}^{\pm} = (\mathbb{F}\mathbb{L}_{\rm d}^{\pm})^{*} \lambda_{Q},
  \qquad
  \Omega_{\mathbb{L}_{\rm d}}^{\pm} = (\mathbb{F}\mathbb{L}_{\rm d}^{\pm})^{*} \omega_{Q}.
\end{equation*}
Let us define the momenta
\begin{equation*}
  p_{k,k+1}^{-} = -D_{1}\mathbb{L}_{\rm d}(q_{k}, q_{k+1}),
  \qquad
  p_{k,k+1}^{+} = D_{2}\mathbb{L}_{\rm d}(q_{k}, q_{k+1}).
\end{equation*}
Then, the discrete Euler--Lagrange equations become
simply $p_{k-1,k}^{+} = p_{k,k+1}^{-}$. So defining
\begin{equation*}
  p_{k} = p_{k-1,k}^{+} = p_{k,k+1}^{-},
\end{equation*}
one can rewrite the discrete Euler--Lagrange equations
as follows:
\begin{equation}
  \label{eq:DEL-H}
  \begin{array}{l}
     p_{k} = -D_{1}\mathbb{L}_{\rm d}(q_{k}, q_{k+1}),
    \medskip\\
     p_{k+1} = D_{2}\mathbb{L}_{\rm d}(q_{k}, q_{k+1}).
  \end{array}
\end{equation}
Furthermore, define the {\em discrete Hamiltonian map}
$\tilde{F}_{\mathbb{L}_{\rm d}}: T^{*}Q \to T^{*}Q$ by
\begin{equation}
  \label{eq:DHM}
  \tilde{F}_{\mathbb{L}_{\rm d}}: (q_{k}, p_{k}) \mapsto (q_{k+1}, p_{k+1}).
\end{equation}
Then, one may relate this map with the discrete Legendre transforms
in Eq.~\eqref{eq:DLT} as follows:
\begin{equation}
  \label{eq:DHM-DLT}
  \tilde{F}_{\mathbb{L}_{\rm d}} = \mathbb{F}\mathbb{L}_{\rm d}^{+} \circ (\mathbb{F}\mathbb{L}_{\rm d}^{-})^{-1}.
\end{equation}
Furthermore, one can also show that this map is symplectic, i.e.,
\begin{equation*}
  (\tilde{F}_{\mathbb{L}_{\rm d}})^{*} \omega_Q = \omega_Q.
\end{equation*}
This corresponds to the Hamiltonian description of the dynamics defined by the
discrete Euler--Lagrange equations introduced by
Marsden and West in \cite{MaWest}. Notice, however, that no discrete
analogue of Hamilton's equations is introduced here, although the
flow is now on the cotangent bundle $T^{*}Q$.

\section*{Appendix C: Prolongation of Lie algebroids and Mechanics on Lie algebroids}\label{ApC}

In this Appendix we recall the definition of the prolongation of a Lie agebroid $\tau_A:A\to Q$ over its projection map and the Euler-Lagrange equations on Lie algebroids. Further details can be found in \cite{LMM}, \cite{Eduardo} and \cite{Eduardoalg}.

\subsection*{C.1. Prolongation of a Lie algebroid}

Let  $(A,\llbracket\cdot,\cdot\rrbracket,\rho)$ be a Lie algebroid of rank $n$ over
$Q$ with projection $\tau_{A}:A\to Q$.

The prolongation of $A$ over its canonical projection, also called \textit{$A$ tangent bundle to $A$}, is defined to be 
$$\mathcal{T}^{\tau_A}A=\bigcup_{a\in A}\{(a',v_a)\in A\times
T_{a}A\mid\rho(a')=(T_{a}\tau_A)(v_a)\}$$ where
$T\tau_A:TA\rightarrow TQ$ is the tangent map to $\tau_A$.

In fact, $\mathcal{T}^{\tau_A}A$ is a Lie algebroid of rank $2n$
over $A$  where $\tau_{A}^{(1)}:\mathcal{T}^{\tau_A}A\Flder A$ is
the vector bundle projection given by 
$\tau_{A}^{(1)}(a',v_{a})=\tau_{TA}(v_{a})=a,$ and the anchor map is
$\rho_1:=pr_2:\mathcal{T}^{\tau_A}A\rightarrow TA$, the projection
over the second factor
(see \cite{LMM} and \cite{Eduardoalg} for more details).

If we now denote by $(a,a',v_a)$ an element
$(a',v_a)\in\mathcal{T}^{\tau_{A}}A$ where $a\in A$ and where $v$ is
tangent, we rewrite the definition of the prolongation of the Lie
algebroid as the subset of $A\times A\times TA$ given by $$\mathcal{T}^{\tau_A}A= \{(a,a',v_a)\in A\times A\times TA\mid \rho(a')=(T\tau_A)(v_a),
v_a\in T_{a}A
 \hbox{ and }\tau_A(a)=\tau_A(a')\}.$$ In this sense, if $(a,a',v_a)\in \mathcal{T}^{\tau_A}A$, then $\rho_1(a,a',v_{a})=(a,v_a)\in T_{a}A$, and the projection is given by $\tau_A^{(1)}(a,a',v_a)=a\in A$.

The prolongation of $A$ over $\tau_A$ takes the role of $TTQ$ in standard Lagrangian mechanics.

Along the paper when $A=AG\rightarrow Q$ is a Lie algebroid associated to a Lie groupoid we have that
$\mathcal{T}^{\tau_{AG}}AG=A({\mathcal P}^{\alpha}G)$

\subsubsection*{Example} Let $\al$ be a  finite dimensional real Lie algebra.  $\al$ is a
Lie algebroid over a single point $Q=\{q\}.$ Using that the anchor
map of $\al$ is zero we obtain that
$$\mathcal{T}^{\tau_{\al}}\al=\{(\xi_1,\xi_2,v_{\xi_1})\in\al\times T\al\}\simeq \al\times\al\times\al\simeq 3\al.$$ The anchor map of $\mathcal{T}^{\tau_{\al}}\al$ is $\rho_{1}(\xi_1,\xi_2,\xi_3)=(\xi_1,\xi_3)\in T_{\xi_1}\al$
and the Lie bracket is defined by $\llbracket
(\xi,\tilde{\xi}),(\eta,\tilde{\eta})\rrbracket=([\xi,\eta],0).$

\subsection*{C.2. Mechanics on Lie algebroids} (see \cite{Eduardo})

Let $A$ be a Lie algeroid over $Q$. We take local coordinates $(q^i)$ on $Q$ and a local basis $\{e_{\alpha}\}$ of sections of the vector bundle
$\tau_{A}:A\to Q$ with $\alpha=1,\ldots,n,$ then we have the
corresponding local coordinates on an open subset $\tau_{A}^{-1}(U)$
of $A$, $(q^i,y^{\alpha})$ ($U$ is an open subset of $Q$), where $y^{\alpha}(a)$
is the $\alpha$-th coordinate of $a\in A$ in the given basis i.e., every
$a\in A$ is expressed as
$a=y^{1}e_{1}(\tau_{A}(a))+\ldots+y^{n}e_{n}(\tau_{A}(a))$.

Such coordinates determine local functions $\rho_{\alpha}^i$,
$\mathcal{C}_{\alpha\beta}^{\gamma}$ on $Q$ which contain the local information of the Lie algebroid structure, and accordingly they are called {\it
structure functions of the Lie algebroid.} They are given by
\begin{equation}\label{estruct0}
\rho(e_{\alpha})=\rho_{\alpha}^i\frac{\partial }{\partial
q^i}\;\;\;\mbox{ and }\;\;\; \lcf
e_{\alpha},e_{\beta}\rcf=\mathcal{C}_{\alpha\beta}^{\gamma} e_{\gamma}.
\end{equation}

These functions should satisfy the relations
\begin{equation}\label{estruc1}
\rho_{\alpha}^j\frac{\partial \rho_{\beta}^i}{\partial q^j}
-\rho_{\beta}^j\frac{\partial \rho_{\alpha}^i}{\partial q^j}=
\rho_{\gamma}^i\mathcal{C}_{\alpha\beta}^{\gamma} \end{equation}

and

\begin{equation}\label{estruc2}
\sum_{cyclic(\alpha,\beta,\gamma)}[\rho_{\alpha}^i\frac{\partial
\mathcal{C}_{\beta\gamma}^\nu}{\partial q^i} + \mathcal{C}_{\alpha\mu}^{\nu}
\mathcal{C}_{\beta\gamma}^{\mu}]=0,
\end{equation}
which are usually called {\it the structure equations.}

Given a Lagrangian $L:A\to\R$, we fix two points $q_0$, $q_T$ in
the base manifold $Q$, then we look for
\emph{admissible curves} $\xi:I\subset\R\to A$, (i.e., curves on $A$ such that 
$\rho(\xi(t))=\frac{d}{dt}\tau_{A}(\xi(t))$) satisfying the variational principle  
$$0=\delta\int_{0}^{T}L(\xi(t))dt.$$

The infinitesimal variations are
$\delta \xi=\eta^{C},$ for all time-dependent sections
$\eta\in\Gamma(\tau_{A}),$ with $\eta(0)=0$ and $\eta(T)=0;$ where
$\eta^{C}$ is a time-dependent vector field on $A,$ the
\textit{complete lift}, locally defined by
$$\eta^C=\rho_{\alpha}^i\eta^{\alpha}\frac{\partial}{\partial q^i}+
(\dot{\eta}+\mathcal{C}^{\alpha}_{\beta\gamma}\eta^{\beta}y^{\gamma})\frac{\partial}{\partial
y^{\alpha}}.$$

From this variational principle (see \cite{Eduardo} for more details) one can derive the \textit{Euler-Lagrange
equations on Lie algebroids} $$ \frac{d}{dt}\left(\frac{\partial L}{\partial
y^{\alpha}}\right)-\rho_{\alpha}^{i}\frac{\partial L}{\partial
q^{i}}+\mathcal{C}_{\alpha\beta}^{\gamma}(q)y^{\beta}\frac{\partial
L}{\partial y^{\gamma}}=0,\quad
\frac{dq^i}{dt}=\rho_{\alpha}^{i}y^{\alpha}.$$

\section*{Appendix D: The Cayley map}\label{ApD}

The Cayley map $\ca:\al\Flder G$ is defined by
\[
\ca(\xi)=\left(e-\frac{\xi}{2}\right)^{-1}\left(e+\frac{\xi}{2}\right)
\]
where $e$ is the identity element of $G$. The Cayley map is valid for a class of quadratic groups (see \cite{Hair} for
example) that include the most interesting Lie groups in mechanics and the one studied in this paper, 
$SO(3)$. Its right trivialized derivative and
inverse are defined by
$$
\mbox{d}\ca_{x}\,y=\left(e-\frac{x}{2}\right)^{-1}\,y\,\left(e+\frac{x}{2}\right)^{-1},\quad
\mbox{d}\ca_{x}^{-1}\,y=\left(e-\frac{x}{2}\right)\,y\,\left(e+\frac{x}{2}\right).
$$

\subsection*{D.1. The Cayley map for $SO(3)$} The group of rigid body
rotations is represented by $3\times 3$ matrices with orthonormal
column vectors corresponding to the axes of a right-handed frame
attached to the body. On the other hand, the algebra $\alg$ is the
set of $3\times 3$ antisymmetric matrices. A $\alg$-basis can be
constructed as $\lc\hat e_{1},\hat e_{2},\hat e_{3}\rc$, $\hat
e_{i}\in\alg$ using the hat map $\hat{\cdot}:\R^{3}\to\mathfrak{so}(3)$, where $\lc e_{1},e_{2},e_{3}\rc$ is the standard
basis for $\R^{3}$ (see \cite{holmbook} for example). Elements $\xi\in\alg$ can be identified with the
vector $\omega\in\R^{3}$ through $\xi=\omega^{\alpha}\,\hat
e_{\alpha}$, or $\xi=\hat\omega$. Under such identification the Lie
bracket coincides with the standard cross product, i.e.,
$\ad_{\hat\omega}\,\hat\rho=\omega\times\rho$, for some
$\rho\in\R^{3}$. Using this identification we have
\begin{equation}\label{caySO}
\ca(\hat\omega)=I_{3}+\frac{4}{4+\parallel\omega\parallel^{2}}\lp\hat\omega+\frac{\hat\omega^{2}}{2}\rp,
\end{equation}
where $I_{3}$ is the $3\times 3$ identity matrix. The right trivialized derivative and
inverse are expressed as the $3\times 3$ matrices
\begin{equation}\label{Dtau}
\mbox{d}\ca_{\omega}=\frac{2}{4+\parallel\omega\parallel^{2}}(2I_{3}+\hat\omega),\,\,\,\,\,\mbox{d}\ca_{\omega}^{-1}=I_{3}-\frac{\hat\omega}{2}+\frac{\omega\,\omega^{T}}{4}.
\end{equation}

\section*{Acknowledgments} 
This work has been supported by MICINN (Spain) Grant MTM 2013-42870-P, ICMAT Severo Ochoa Project
SEV-2011-0087 and IRSES-project ``Geomech-246981''. We would like to thank Juan Carlos Marrero and Eduardo Mart\'inez for fruitful comments and discussions.

\medskip
Received xxxx 20xx; revised xxxx 20xx.
\medskip

\end{document}